\newcommand{\bk}{\mathbbm{k}}
\newcommand{\KK}{K}
\newcommand{\OO}{{\mathcal O}}
\newcommand{\OK}{{\mathcal O}_\KK}
\newcommand{\m}{\to}
\newcommand{\cO}{\mathcal{O}}
\newcommand{\cT}{\mathcal{T}}
\providecommand{\Linkh}{\widehat{\ensuremath\mr{Link}}}
\providecommand{\Link}{\ensuremath\mr{Link}}
\newcommand{\bQ}{\mathbb{Q}}
\newcommand{\bZ}{\mathbb{Z}}
\newcommand{\Z}{\mathbb{Z}}
\newcommand{\Q}{\mathbb{Q}}
\newcommand{\C}{\mathbb{C}}
\newcommand{\R}{\mathbb{R}}
\numberwithin{theoremcounter}{section}
\newaliascnt{theoremauto}{theoremcounter}
\newaliascnt{Defauto}{theoremcounter}
\newaliascnt{exampleauto}{theoremcounter}
\newaliascnt{lemmaauto}{theoremcounter}
\newaliascnt{propositionauto}{theoremcounter}
\newaliascnt{corollaryauto}{theoremcounter}
\newaliascnt{remarkauto}{theoremcounter}
\newaliascnt{notationauto}{theoremcounter}
\newaliascnt{claimauto}{theoremcounter}
\newaliascnt{warningauto}{theoremcounter}
\newaliascnt{questionauto}{theoremcounter}
\newaliascnt{discussionauto}{theoremcounter}
\newaliascnt{computationauto}{theoremcounter}
\newaliascnt{conjectureauto}{theoremcounter}
\newaliascnt{convauto}{theoremcounter}
\newtheorem{theorem}[theoremauto]{Theorem}
\newtheorem{lemma}[lemmaauto]{Lemma}
\newtheorem{proposition}[propositionauto]{Proposition}
\newtheorem{corollary}[corollaryauto]{Corollary}
\newtheorem*{corollary*}{Corollary}
\newtheorem{conjecture}[conjectureauto]{Conjecture}
\newtheorem{atheorem}{Theorem}
\theoremstyle{definition}
\newtheorem{definition}[Defauto]{Definition}
\newtheorem{notation}[notationauto]{Notation}
\newtheorem{convention}[convauto]{Convention}
\theoremstyle{remark}
\newtheorem{remark}[remarkauto]{Remark}
\newtheorem{question}[questionauto]{Question}
\newcommand{\cat}[1]{\mathsf{#1}}
\newcommand{\mr}[1]{{\rm #1}}
\newcommand{\lra}{\longrightarrow}
\DeclareMathOperator*{\sgn}{sgn}
\newcommand{\GL}{\mr{GL}}
\newcommand{\SL}{\mr{SL}}
\newcommand{\St}{\mr{St}}
\title[On the generalized Bykovski\u\i\, presentation]{On the generalized Bykovski\u\i\, presentation of Steinberg modules}
 \author{Alexander Kupers}
 \email{kupers@math.harvard.edu}
 \address{Harvard University \\ 
 Department of Mathematics \\
 	 1 Oxford Street \\
 	 Cambridge MA, 02138 \\USA}
\thanks{Alexander Kupers was supported in part by NSF grant DMS-1803766}
 \author{Jeremy Miller}\thanks{Jeremy Miller was supported in part by NSF grant DMS-1709726}
  \email{jeremykmiller@purdue.edu}
\address{Purdue University \\
Department of Mathematics \\
 	 150 North University \\
 	 West Lafayette IN, 47907 \\USA}
\author{Peter Patzt}
\email{patzt@math.ku.dk}
\address{University of Copenhagen\\
Department of Mathematic Sciences \\
 	 Universitetsparken 5 \\
 	 Copenhagen \O, DK-2100 \\Denmark}
	 \thanks{Peter Patzt was supported by the Danish National Research Foundation through the Copenhagen Centre for Geometry and Topology (DNRF151)}
\author{Jennifer C. H. Wilson}
\email{jchw@umich.edu}
\address{University of Michigan \\ Department of Mathematics \\
 	 530 Church St\\
 	 Ann Arbor MI, 48109 \\USA}
\thanks{Jennifer Wilson was supported in part by NSF grant DMS-1906123}
	\def\MR#1{}}
\date{\today}
\begin{document}
	
\begin{abstract} We study presentations of the virtual dualizing modules of special linear groups of number rings, the Steinberg modules. Bykovski\u\i\ gave a presentation for the Steinberg modules of the integers, and our main result is a generalization of this to the Gaussian integers and the Eisenstein integers. We also show that this generalization does not give a presentation for the Steinberg modules of several Euclidean number rings. \end{abstract}

\maketitle


\section{Introduction}


\subsection{Cohomology} In this paper, we study the cohomology of special linear groups of number rings in large degrees. Let $\OK$ denote the ring of integers in a number field $\KK$ with $r_1$ real embeddings and $r_2$ pairs of complex conjugate embeddings. It follows from the work of Borel--Serre \cite[Theorem 11.4.2]{BoSe} (also see e.g.~Church--Farb--Putman \cite[Section 1.4]{CFP}) that \[\nu_n \coloneqq \frac{r_1}{2}((n+1)n-2)+r_2 (n^2-1)-n+1\] is the virtual cohomological dimension of $\SL_n(\OK)$, and hence $H^i(\SL_n(\OK);\Q) = 0$ for $i>\nu_n$. This does not mean that $H^{\nu_n}(\SL_n(\OK);\Q) \neq 0$, only that there is some twisted coefficient system where this group is nontrivial. We investigate the following question.

\begin{question}
For $\OK$ a number ring, what is the largest $i$ such that $H^i(\SL_n(\OK);\Q)$ is non-zero? 
\end{question}

In particular, we seek better bounds on vanishing of rational cohomology than just the virtual cohomological dimension. See \cite{LS,LSK3,LSK45,CFPconj,CFP,CP,DSGGHSYK4,MPWY,DSEVKM} 
 for progress on this question as well as applications of this question to computations in algebraic $K$-theory. The main such results are Lee--Szczarba's theorem \cite[Theorem 1.3]{LS} that $H^{\nu_n}(\SL_n(\OK);\Q) = 0$ for $n \geq 2$ and $\OK$ a Euclidean domain, and Church--Putman's theorem \cite[Theorem A]{CP} that $H^{\nu_n-1}(\SL_n(\Z);\Q) = 0$ for $n \geq 3$. Our main theorem extends Church--Putman's result to two other number rings: the Gaussian integers $\bZ[i]$ and the Eisenstein integers $\bZ[\rho]$ with $\rho=\frac{1+\sqrt{-3}}{2}$ a sixth root of unity. These are Euclidean domains, with $\nu_n=n^2-n$. 

\begin{atheorem} \label{Vanishing}
Let $\OK$ denote the  Gaussian integers or Eisenstein integers. Then
\begin{align*}
H^{\nu_n-1}(\GL_n(\OK);\Q) &= 0 \qquad \text{ for $n \geq 2$}, \\
H^{\nu_n-1}(\SL_n(\OK);\Q) &= 0 \qquad \text{ for $n \geq 3$}. 
\end{align*} 
\end{atheorem}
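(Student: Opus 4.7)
The plan is to adapt Church--Putman's strategy for $\SL_n(\Z)$ \cite{CP}, substituting the generalized Bykovski\u\i\ presentation of $\St_n(\KK)$ for $\OK \in \{\bZ[i], \bZ[\rho]\}$ (established earlier in the paper) in place of Bykovski\u\i's original presentation. First I would invoke Borel--Serre duality \cite{BoSe}, giving an isomorphism
\[H^{\nu_n - i}(\SL_n(\OK); \Q) \cong H_i(\SL_n(\OK); \St_n(\KK) \otimes \Q),\]
which reduces the $\SL_n$ statement to proving $H_1(\SL_n(\OK); \St_n(\KK) \otimes \Q) = 0$ for $n \geq 3$; any orientation twist is trivialized over $\Q$ by averaging over its finite image.

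Next, the generalized Bykovski\u\i\ presentation provides an exact sequence of $\SL_n(\OK)$-modules
\[\Q[\mathcal{R}] \lra \Q[\mathcal{A}] \lra \St_n(\KK) \otimes \Q \lra 0,\]
where $\mathcal{A}$ is the permutation module on integral apartment classes and $\mathcal{R}$ is the permutation module on generalized Bykovski\u\i\ relators. Extending this to a projective resolution and passing to the associated hyperhomology spectral sequence expresses $H_*(\SL_n(\OK); \St_n(\KK) \otimes \Q)$ in terms of the group homology of stabilizers of apartments and of relators, via Shapiro's lemma. Because $\OK^\times$ is finite (of order $4$ or $6$) and $\SL_n(\OK)$ has well-controlled orbits on integer bases, these stabilizer homologies are computable.

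The main obstacle, I expect, is the combinatorial step: showing that in total degree $1$ every cycle of the resulting bicomplex is a boundary. Concretely, one must check that the coinvariant complex
\[\Q[\mathcal{R}]_{\SL_n(\OK)} \lra \Q[\mathcal{A}]_{\SL_n(\OK)}\]
has trivial $H_1$ once combined with the $E^1$-contributions from relator stabilizers. In Church--Putman this reduces to an orbit-by-orbit analysis of Bykovski\u\i\ moves and a connectivity argument on an associated semi-simplicial complex of partial bases. For $\OK = \bZ[i]$ or $\bZ[\rho]$ the analogous argument should go through, using that $\OK$ is Euclidean (so the needed elementary matrix moves can be realized) and carefully tracking the extra sign/unit data arising from $\OK^\times$.

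For the $\GL_n$ statement I would use the extension $1 \to \SL_n(\OK) \to \GL_n(\OK) \to \OK^\times \to 1$ with finite quotient, giving $H^*(\GL_n(\OK); \Q) \cong H^*(\SL_n(\OK); \Q)^{\OK^\times}$. For $n \geq 3$ this reduces immediately to the $\SL_n$ case just handled. For $n = 2$, where the $\SL_2$ statement is not claimed, I would rerun the generalized Bykovski\u\i\ argument directly for $\GL_2(\OK)$: the larger diagonal action collapses more apartment orbits and introduces additional relations, enough to kill $H_1$ even in the low-rank regime where the $\SL_2$ analogue fails.
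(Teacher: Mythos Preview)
Your overall architecture matches the paper's: invoke Borel--Serre duality to reduce to $H_1(G_n;\St_n(\KK)\otimes\Q)=0$, then use the generalized Bykovski\u\i\ presentation as a partial flat resolution and pass to coinvariants. However, you have misidentified the content of the ``main obstacle.'' There is no orbit-by-orbit analysis and no connectivity argument at this stage; those connectivity results are used earlier to \emph{prove} the presentation, not to deduce vanishing from it. Once the presentation $\Q[\mathcal{R}]\to\Q[\mathcal{A}]\to\St_n(\KK)\otimes\Q\to 0$ is in hand with both terms $G_n$-acyclic over $\Q$, it suffices to show $\Q[\mathcal{R}]_{G_n}=0$: indeed, from $0\to K\to\Q[\mathcal{A}]\to\St_n\otimes\Q\to 0$ one gets $H_1(G_n;\St_n\otimes\Q)\hookrightarrow H_0(G_n;K)$, and $H_0(G_n;K)$ is a quotient of $\Q[\mathcal{R}]_{G_n}$. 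The paper dispatches $\Q[\mathcal{R}]_{G_n}=0$ in one line: for an augmented frame $\{v_0,\dots,v_n\}$ with $\vec v_0=\vec v_1+\vec v_2$, the linear map swapping $\vec v_1\leftrightarrow\vec v_2$ (and, for $\SL_n$ with $n\geq 3$, also negating $\vec v_3$ to fix the determinant) lies in $G_n$ and acts as $-1$ on that generator, so $2$ invertible kills the coinvariants. This sign trick is the entire argument, and it also explains transparently why $n\geq 3$ is needed for $\SL_n$ but $n\geq 2$ suffices for $\GL_n$.

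Two smaller points. First, for the $\GL_n$ duality the paper does not use Borel--Serre directly but cites Putman--Studenmund, which handles the dualizing module for $\GL_n$ over imaginary quadratic fields; your transfer route $H^*(\GL_n;\Q)\cong H^*(\SL_n;\Q)^{\OK^\times}$ is fine for $n\geq 3$ but, as you note, forces a separate argument at $n=2$, whereas the paper's approach treats $\GL_n$ uniformly via the same coinvariants computation. Second, your remark about ``any orientation twist is trivialized over $\Q$ by averaging'' is unnecessary here: for $\SL_n$ the Borel--Serre duality already involves the untwisted Steinberg module.
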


\noindent In fact, it suffices to only invert $(2n+1)!$, but we restrict to rational statements in the introduction.

\subsection{Dualizing modules} 
\label{SecDualizing}

Our strategy for proving \autoref{Vanishing} is to give a presentation for the virtual dualizing module of the groups $\GL_n(\OK)$ and $\SL_n(\OK)$, and use it to show that the $(\nu_n-1)$st cohomology group vanishes. 

Recall that the \emph{Tits building} $\cT_n(\KK)$ of a field $\KK$ is the geometric realization of the poset of non-empty proper subspaces of the $\KK$-vector space $\KK^n$ ordered by inclusion. This poset is spherical of dimension $(n-2)$ \cite{Solomon,Garland,Quillen-Ki} and its top reduced homology is called the \emph{Steinberg module} and denoted $\St_n(\KK)$. The action of $\GL_n(\OK)$ on $\cT_n(\KK)$ gives this the structure of a $\bZ[\GL_n(\OK)]$-module. Borel--Serre \cite[\S 11]{BoSe} proved that for $\OK$ the ring of integers in a number field $\KK$, $\St_n(\KK)$ is the virtual dualizing module of $\KK$. That is, there is a natural isomorphism
\[H^{\nu_n-i}(\SL_n(\OK);\Q) \overset{\cong}\lra H_{i}(\SL_n(\OK);\St_n(\KK) \otimes \Q).\]
Thus, to show $H^{\nu_n-1}(\SL_n(\OK);\Q) =0$, it suffices to show $H_{1}(\SL_n(\OK);\St_n(\KK) \otimes \Q)=0$. 

This will be done by finding a presentation of the relevant Steinberg modules. For $\OO$ an integral domain, let $\mr{Byk}_n(\OO)$ denote the quotient of the free abelian group on symbols $[[\vec v_1,\ldots, \vec v_n]]$, with $\vec v_1,\ldots, \vec v_n$ an ordered basis of $\OO^n$, by the following relations:
\begin{enumerate}[\noindent (1)]
\item $[[\vec v_1,\ldots, \vec v_n]]=\sgn(\sigma)[[\vec v_{\sigma(1)},\ldots, \vec v_{\sigma(n)}]]$ for $\sigma$ a permutation of $\{1,\ldots,n\}$ and $\sgn(\sigma)$ its sign. 
\item $[[\vec v_1, \vec v_2, \ldots, \vec v_n]]=[[u \vec v_1, \vec v_2, \ldots, \vec v_n]]$ for $u$ a unit in $\OO$.
\item $[[\vec v_1,\vec v_2, \vec v_3, \ldots, \vec v_n]]-[[\vec v_1+\vec v_2,\vec v_2, \vec v_3, \ldots, \vec v_n]]+[[\vec v_1+\vec v_2,\vec v_1, \vec v_3, \ldots, \vec v_n]]=0$.
\end{enumerate} The symbols $[[\vec v_1,\ldots, \vec v_n]]$ are sometimes called \emph{modular symbols}. Letting $A \in \GL_n(\OO)$ act on a symbol $[[\vec v_1,\ldots, \vec v_n]]$ by $[[A\vec v_1,\ldots,A\vec v_n]]$ gives $\mr{Byk}_n(\OO)$ a $\bZ[\GL_n(\OO)]$-module structure.

Let $K$ denote the field of fractions of $\OO$. Given an ordered basis $\vec v_1,\ldots, \vec v_n$, the subposet of subspaces of $\KK^n$ which are spanned by a non-empty proper subset of $\vec v_1,\ldots, \vec v_n$, is isomorphic to the barycentric subdivision of $\partial \Delta^{n-1}$, and thus realizes to an $(n-2)$-dimensional sphere with canonical orientation. These spheres are called \emph{apartments} and sending $[[\vec v_1,\ldots, \vec v_n]]$ to the fundamental class of the apartment gives a homomorphism of $\bZ[\GL_n(\OO)]$-modules
\[\mr{Byk}_n(\OO) \lra \St_n(\KK).\] Bykovski\u\i\, \cite{Byk} proved that $\mr{Byk}_n(\Z) \m \St_n(\Q)$ is an isomorphism. If $\mr{Byk}_n(\OO) \m \St_n(\KK)$ is an isomorphism, we say \emph{the generalized Bykovski\u\i\, presentation holds for $\OO$}. That the Bykovski\u\i's  presentation holds for $\Z$ is the key ingredient in Church--Putman's vanishing result for $H^{\nu_n-1}(\SL_n(\Z);\Q)$. We investigate the following question, and give a partial answer:

\begin{question}For which number rings does the generalized Bykovski\u\i\ presentation hold?\end{question}

\begin{atheorem} \label{BforGandE} Let $\OK$ denote the Gaussian integers or Eisenstein integers, and $\KK$ its field of fractions. Then $\mr{Byk}_n(\OK) \m \St_n(\KK)$ is an isomorphism of $\bZ[\GL_n(\OK)]$-modules for all $n$.
\end{atheorem}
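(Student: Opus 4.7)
The plan is to generalize Bykovski's strategy for $\bZ$ to the Euclidean domains $\bZ[i]$ and $\bZ[\rho]$. Surjectivity of $\mr{Byk}_n(\OK) \to \St_n(\KK)$ follows from the Ash--Rudolph theorem for Euclidean number rings, which asserts that $\St_n(\KK)$ is generated by integral apartments coming from $\OK$-bases, and each such apartment is the image of a symbol $[[\vec v_1,\ldots,\vec v_n]]$. Thus the real content of \autoref{BforGandE} is injectivity: that every relation among integral apartments in $\St_n(\KK)$ is a consequence of the three Bykovski relations.

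To prove injectivity, I would introduce an auxiliary semi-simplicial set $X_\bullet(\OK)$ of ``partial bases'': its $p$-simplices are ordered $(p+1)$-tuples of vectors in $\OK^n$ extending to an $\OK$-basis, with face maps given by deletion. The group $\GL_n(\OK)$ acts on $X_\bullet(\OK)$, and the top-dimensional simplices assemble into the generators of $\mr{Byk}_n(\OK)$. A standard spectral-sequence argument of the kind used by Bykovski and Church--Putman identifies $\mr{Byk}_n(\OK) \to \St_n(\KK)$ as an isomorphism provided a suitable complex of ``augmented partial bases'' built from $X_\bullet(\OK)$ is $1$-connected. Relation $(1)$ reflects the orientation of the apartment, relation $(2)$ reflects the action of unit diagonal matrices, and relation $(3)$ corresponds to a specific $2$-cell coming from the Euclidean algorithm in $\OK$. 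The same argument handles both $\GL_n$ and $\SL_n$, since the construction is $\GL_n(\OK)$-equivariant.

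The main obstacle is proving this connectivity for $\OK\in\{\bZ[i],\bZ[\rho]\}$. For $\bZ$, Bykovski's argument relied on the linearly ordered Euclidean function and on the (essential) uniqueness of the Euclidean remainder. For the Gaussian and Eisenstein integers, division with remainder under the complex norm admits \emph{multiple} valid quotients: given $a,b\in\OK$ with $b\neq 0$, the set of $q\in\OK$ with $|a-qb|^2<|b|^2$ consists of the $\OK$-lattice points close to $a/b\in\C$, which is nonempty but not a singleton. I would prove connectivity by showing that any loop in the $1$-skeleton bounds a disk assembled from explicit $2$-cells corresponding either to the three Bykovski relations or to choice-of-quotient moves. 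The key lemma reduces the general $n$ case to a $2$-dimensional statement about the $\OK$-lattice in $\C$, using the sharp norm inequalities (covering radius $\tfrac{1}{\sqrt{2}}$ for the square lattice $\bZ[i]$, and $\tfrac{1}{\sqrt{3}}$ for the hexagonal lattice $\bZ[\rho]$) to ensure that each reduction step strictly decreases a well-chosen complexity measure. The hexagonal symmetry of $\bZ[\rho]$ and square symmetry of $\bZ[i]$ reduce the verification to a finite list of local configurations, and this case analysis constitutes the bulk of the technical work.
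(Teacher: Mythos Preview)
Your overall strategy matches the paper's: both follow Church--Putman's topological approach, introducing the complex of augmented partial frames $BA_n(\OK)$ (your ``augmented partial bases''), and deducing the Bykovski\u\i\ presentation from a high-connectivity statement via the long exact sequence of the pair $(BA_n(\OK),BA_n(\OK)')$ together with a map-of-posets spectral sequence comparing $BA_n(\OK)'$ to the Tits building. Surjectivity via Ash--Rudolph is also what the paper invokes.

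There is, however, a real imprecision that hides the main difficulty. The required connectivity is not ``$1$-connected'': one needs $BA_n^m(\OK)$ to be Cohen--Macaulay of dimension $n$, i.e.\ $(n-1)$-connected with spherical links, for all $n$ and $m$. The paper establishes this by induction, and the genuinely new input beyond the $\bZ$ case is localized in two places. First, one must show that the restricted link $\Linkh^<_{BA_2(\OK)}(w)$ is connected; this is exactly the statement that the lattice points of $\OK$ inside an open unit disk form a connected subgraph of the unit-Cayley graph, which your covering-radius remark addresses. Second, and more subtly, one must show that $\Linkh^<_{BA_3(\OK)}(w)$ is \emph{simply} connected. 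The paper's proof of this requires a \emph{two-variable} Euclidean lemma: for any $z_1,z_2\in\C$ there exist $r_1,r_2\in\OK$ with $|z_i-r_i|<1$ and additionally $|(z_1-r_1)+(z_2-r_2)|<1$. This is what allows one to choose compatible remainders for the three vertices of an internally additive triangle so that their images still form a simplex, and it is the step where the argument for general Euclidean domains breaks down (indeed the paper shows the presentation fails for many Euclidean quadratic rings). Your proposal mentions only the single-variable covering radius and ``choice-of-quotient moves'', which is not enough; without the two-point lemma the retraction in the inductive step cannot be built.
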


\autoref{Vanishing} follows quickly from \autoref{BforGandE}. Surjectivity of the map $\mr{Byk}_n(\OO) \m \St_n(\KK)$ follows from work of Ash--Rudolph \cite[Theorem 4.1]{AshRudolph} whenever $\OO$ is Euclidean. In fact, for number rings $\OK$, the generalized Riemann hypothesis implies that $\mr{Byk}_n(\OK) \m \St_n(\KK)$  is surjective if and only if $\OK$ is Euclidean \cite[Corollary 1.2]{MPWY}. One might think that the generalized Bykovski\u\i\, presentation holds for all Euclidean number rings, but this is not the case.

\begin{atheorem} \label{NoB}
Let $\OK$ be the ring of integers in $\KK=\Q(\sqrt d)$. Assume $\OK$ is a Euclidean domain that is not additively generated by units. Then the map $\mr{Byk}_n(\OK) \m \St_n(\KK)$ is not injective for all $n \geq 2$.
\end{atheorem}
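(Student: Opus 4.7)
The plan is to exhibit, for every $n \geq 2$, a nonzero element in $\ker(\mr{Byk}_n(\OK) \to \St_n(\KK))$. I would first focus on the case $n=2$, which is the crux, and then propagate to $n \geq 3$ either by a suitable stabilization $[[\vec v_1, \vec v_2]] \mapsto [[\vec v_1, \vec v_2, e_3, \ldots, e_n]]$ (which is compatible with a corresponding join-type map on Steinberg modules, so any $n=2$ kernel element stabilizes to a candidate kernel element in dimension $n$) or by a parallel direct construction for each $n$.

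For $n = 2$, observe that $\St_2(\KK) = \widetilde H_0(\bP^1(\KK); \Z)$ is the free abelian group on formal differences $[L]-[L']$ of distinct lines. Hence any cycle $L_1, \ldots, L_k$ in $\bP^1(\KK)$ with consecutive pairs $(L_i, L_{i+1 \bmod k})$ representable by a basis of $\OK^2$ produces an element $\eta = \sum_i [[\vec v_i, \vec v_{i+1}]] \in \mr{Byk}_2(\OK)$ whose image in $\St_2(\KK)$ telescopes to zero. The hypothesis enters by choosing $a \in \OK$ outside the additive subgroup $S \subseteq \OK$ generated by units, and then cooking up such a cycle that incorporates $a$ in an essential way---for instance, a $4$-cycle through the lines $\langle e_1 \rangle$, $\langle e_2 \rangle$, and $\langle e_1 + a\, e_2 \rangle$, together with a fourth line chosen so that all consecutive pairs are unimodular (the precise choice depending on $\OK$).

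The main obstacle is to verify that $\eta$ is nonzero in $\mr{Byk}_2(\OK)$. I would address this by constructing an explicit homomorphism $\phi \colon \mr{Byk}_2(\OK) \to M$ to a well-chosen abelian group $M$ that detects $\eta$. Heuristically, the Bykovski\u\i\ relations allow one to rewrite any symbol $[[\vec v_1, \vec v_2]]$ as a signed sum of other symbols whose associated bases are obtained from $(\vec v_1, \vec v_2)$ by right-multiplication by elements of the subgroup $\Gamma \subseteq \SL_2(\OK)$ generated by unit-diagonal matrices and the elementary matrices $E_{ij}(1)$. The key observation is that $\Gamma = \SL_2(\OK)$ precisely when $\OK$ is additively generated by units, so when $S \neq \OK$ the quotient $\SL_2(\OK)/\Gamma$ (or the related additive quotient $\OK/S$) is nontrivial and provides the natural target for $\phi$. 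The delicate step---and the technical heart of the theorem---is designing $\phi$ so that it respects all three Bykovski\u\i\ relations simultaneously while being nonzero on the chosen cycle.
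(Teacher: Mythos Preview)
Your proposal has genuine gaps in both the $n=2$ and the $n \geq 3$ cases, and the paper's approach is quite different from what you sketch.

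\textbf{The case $n=2$.} Your construction of the cycle $\eta$ is essentially correct: the paper also produces such a telescoping sum, built from a path in $B_2(\OK)$ from $\mr{span}\begin{psmallmatrix} r_1 \\ 1\end{psmallmatrix}$ to $\mr{span}\begin{psmallmatrix} r_2 \\ 1\end{psmallmatrix}$ avoiding $e_1$, concatenated with the two edges back through $e_1$ (this is what the paper calls a \emph{detour}). But you correctly identify that proving $\eta \neq 0$ in $\mr{Byk}_2(\OK)$ is the entire content, and your proposed method---building an explicit homomorphism $\phi$ out of $\mr{Byk}_2(\OK)$---is not carried out. The paper does \emph{not} construct any such $\phi$. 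Instead it proceeds topologically: it identifies $\mr{Byk}_2(\OK) \cong H_1(BA_2(\OK), BA_2(\OK)')$ and uses the long exact sequence of the pair. Since $BA_2(\OK)'$ is $0$-dimensional, $H_1(BA_2(\OK)) \hookrightarrow \mr{Byk}_2(\OK)$, and this image lies in the kernel of $\mr{Byk}_2(\OK) \to \St_2(\KK)$. The detour loop is then shown to be nonzero in $H_1(BA_2(\OK))$ by a link argument: the two neighbours of $e_1$ in the loop lie in different components of $\Link_{BA_2(\OK)}(e_1) \cong BA_1^1(\OK)$, which is the Cayley graph of $\OK$ with unit generators and is disconnected precisely because $\OK$ is not generated by units. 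So the hypothesis enters through $\pi_0$ of a link, not through a quotient $\SL_2(\OK)/\Gamma$.

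\textbf{The case $n \geq 3$.} Your stabilization map does send $\ker(\mr{Byk}_2 \to \St_2)$ into $\ker(\mr{Byk}_n \to \St_n)$, but you give no argument that the image is nonzero. There is no obvious retraction $\mr{Byk}_n \to \mr{Byk}_2$, and your detection map $\phi$ was already missing for $n=2$. The paper's argument here is substantially more involved and does not use stabilization at all: it runs the Quillen spectral sequence for the map $\mr{span} \colon \mr{sd}(BA_n(\OK)') \to \cT_n(\KK)$, identifies $E^2_{n-3,1}$ with $\bigoplus_{\dim V = 2} \St(\KK^n/V) \otimes H_1(BA(V))$, and shows that this term survives to $E^\infty$ on the $(n-2)$-diagonal. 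Since $H_1(BA(V)) \neq 0$ by the $n=2$ argument, this forces $H_{n-2}(BA_n(\OK)') \to \St_n(\KK)$ to have nontrivial kernel, and that kernel pulls back to $\mr{Byk}_n(\OK)$ via the surjective connecting homomorphism.
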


The norm-Euclidean number rings satisfying the hypothesis of this theorem have been classified and are exactly $\bQ(\sqrt d)$ for $d\in \{-11,-7,-2,6,7,11,17,19,33,37,41,57,73\}$; see \autoref{Listd}. Thus there are many examples of Euclidean number rings where the generalized Bykovski\u\i\, presentation fails. In fact, we give a more general result (\autoref{EimpliesNoB}) allowing the reader to possibly find more examples.

\begin{remark} The main technical result is that certain simplicial complexes of ``augmented partial frames'' are highly-connected. This has applications in a forthcoming paper \cite{KMP} on homological stability for general linear groups of certain Euclidean domains.\end{remark}

\subsection{Acknowledgments} 

Much of this project was completed as part of the American Institute of Mathematics SQuaRE ``Secondary representation stability.'' We thank AIM for their support. We also thank Rohit Nagpal who participated in this SQuaRE but declined to be a coauthor. We thank our anonymous referee for their careful reading and thoughtful feedback. 

\tableofcontents
\setcounter{tocdepth}{2}

\section{Elementary properties of the Gaussian integers and Eisenstein integers}

In this section, we establish some elementary properties of the Gaussian integers and Eisenstein integers. These properties are the primary reason that the proof of \autoref{Vanishing} in this paper only applies to these rings.

\begin{notation} Let $\OK$ denote the Gaussian integers or Eisenstein integers. We will pick preferred ring generators for each ring, 
	\begin{align*}
	\mr{Gaussian\, integers}: &\qquad i, \\ 
	\mr{Eisenstein\, integers}: &\qquad \rho = e^{\frac{2\pi i}{6}} = \frac12 + i \frac{\sqrt3}{2}. 
	\end{align*} The latter is not the conventional choice of an additive generator for the Eisenstein integers, which is more typically $\rho^2 = e^{\frac{2\pi i}{3}}$. See \autoref{FigureUnits}.  
	\begin{figure}[h!]
\begin{subfigure}{.425\textwidth} \centering
\labellist
\Large \hair 0pt
\pinlabel { \color{violet} $\small{  \displaystyle 1 }$} [tr] at 75 54
\pinlabel { \color{violet} $\small{  \displaystyle i }$} [tr] at 54 75
\pinlabel { \color{red} $\small{  \displaystyle i^2=-1 }$} [tr] at 22 56
\pinlabel { \color{red} $\small{  \displaystyle i^3=-i }$} [tr] at 43 33
\endlabellist
\qquad
\includegraphics[scale=1.5]{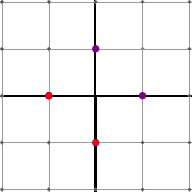}   \\[.2em]
\subcaption{The Gaussian integers.} 
\label{Gaussian-units}
\end{subfigure}\qquad  
\begin{subfigure}{.425\textwidth} \centering
\labellist
\Large \hair 0pt
\pinlabel { \color{violet} $\small{  \displaystyle 1 }$} [tr] at 88 53
\pinlabel { \color{violet} $\small{  \displaystyle \rho }$} [tr] at 78 73
\pinlabel { \color{red} $\small{  \displaystyle \rho^2=\rho-1 }$} [tr] at 42 76
\pinlabel { \color{red} $\small{  \displaystyle \rho^3=-1 }$} [tr] at 32 56
\pinlabel { \color{red} $\small{  \displaystyle \rho^4=-\rho }$} [tr] at 42 33
\pinlabel { \color{red} $\small{  \displaystyle \rho^5=1-\rho }$} [tr] at 110 33
\endlabellist
\qquad
\includegraphics[scale=1.5]{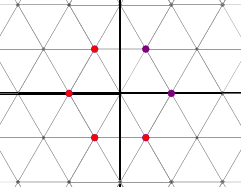}   \\[.2em]
\subcaption{The Eisenstein integers.} 
\label{Eisenstein-units}
\end{subfigure}  
\caption{The additive generators and the units in $\OK$.} 
\label{FigureUnits}
\end{figure}
\noindent With this notation, observe that the complex norm is given as follows, 
\begin{align*}
\mr{Gaussian\, integers}: &&  |x+ iy|^2 &= x^2 + y^2 && (x,y \in \R) \\ 
\mr{Eisenstein\, integers}: && |x+ \rho y|^2 = \left|x+  \frac12 (1 + i \sqrt{3})y\right|^2 &= x^2 +xy+ y^2 && (x,y \in \R) 
\end{align*}
\end{notation}

The complex norm is a Euclidean function for $\OK$ the Gaussian integers or Eisenstein integers. That is, given $a,b \in \OK$ with $|b|>0$, then there is a $q \in \OK$ with $|a-qb|<|b|$. Moreover, this function is multiplicative in the sense that $|a b|=|a||b|$. 
We now prove some key lemmas. 

\begin{lemma} \label{lem0}
Let $\OK$ be the Gaussian integers or Eisenstein integers. Let $a,b \in \OK$ with $|a|=|b|>0$. Then there is a unit $u$ with $|a-ub|<|a|$. 
\end{lemma}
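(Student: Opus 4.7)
The plan is to reduce the claim, via the multiplicativity of the complex norm, to the following purely geometric assertion: for every $z$ on the unit circle in $\C$, there is a unit $u \in \OK$ with $|z - u| < 1$. Indeed, setting $z = a/b$, the hypothesis $|a| = |b| > 0$ gives $|z| = 1$, and once such a $u$ is found, one has $|a - ub| = |b|\cdot |z - u| < |b| = |a|$.

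With this reduction in hand, the second step is the observation that the units of $\OK$ are themselves well distributed on the unit circle. For the Gaussian integers the units are $\{\pm 1, \pm i\}$, inscribed as a regular square, and for the Eisenstein integers the units are $\{\pm 1, \pm \rho, \pm \rho^2\}$, inscribed as a regular hexagon (see Figure~\ref{FigureUnits}). In either case, writing $k\in\{4,6\}$ for the number of units, every point of the unit circle lies within angular distance $\pi/k$ of some unit $u$, so the chord length satisfies the sharp bound $|z - u| \leq 2\sin(\pi/(2k))$.

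The final step is the arithmetic check that $2\sin(\pi/8) = \sqrt{2-\sqrt{2}} < 1$ and $2\sin(\pi/12) = \tfrac{1}{2}(\sqrt{6}-\sqrt{2}) < 1$, which completes the proof in the two respective cases.

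There is not really a hard step: the entire content is the remark that the unit groups of $\OK$ are large enough — four or six elements evenly distributed around $S^1$ — that the open unit disks centered at them cover the unit circle. This is precisely what fails for $\Z$, whose only units are $\pm 1$ and for which such a statement would be false, and it is a recurring reason why the arguments of this paper specialize to the Gaussian and Eisenstein integers.
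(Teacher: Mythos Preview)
Your proof is correct and follows essentially the same approach as the paper's: both exploit the fact that the units of $\OK$ are evenly distributed around the unit circle, so one can rotate $b$ by a unit to make the angle with $a$ small, and then a chord-length computation gives $|a-ub|<|a|$. The paper phrases this as choosing $u$ so the angle between $a$ and $ub$ is less than $\pi/3$ (the threshold at which chord length equals radius), whereas you normalize by $b$ first and compute the sharp bounds $2\sin(\pi/8)$ and $2\sin(\pi/12)$ explicitly; the content is the same.

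One small correction to your closing commentary: the lemma itself does not fail for $\bZ$ --- there $|a|=|b|$ forces $a=\pm b$, so it holds trivially. What fails is the geometric covering statement (the open unit disks about $\pm 1$ do not cover $S^1$), which is indeed the relevant obstruction in the later arguments.
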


\begin{proof} 
Pick $u$ such that the angle between $a$ and $ub$ is less than $\pi/3$. It is an elementary exercise in trigonometry to see that $|a-ub|<|a|$.
\end{proof}

\begin{convention}
Let $H$ be a group and $S$ a set of generators. In this paper, the term \emph{Cayley graph} for $H$ with generators $S$ means the undirected graph with vertices elements of $H$ and an edge between $h$ and $g$ if and only if they differ by left multiplication by an element in $S$. In particular, if $s$ and $s^{-1} \in S$, then the Cayley graph for $H$ with generators $S$ agrees with the Cayley graph for $H$ with generators $S \setminus 
\{s^{-1}\}$.
\end{convention}

\begin{lemma} \label{lem1}
Let $\OK$ be the Gaussian integers or Eisenstein integers. Let $G$ be the Cayley graph of $\OK$ with units as generators.  Let $z \in \C$ and let $G_z$ be the full subgraph of $G$ on vertices $x$ with $|x-z|<1$. Then $G_z$ is connected. 
\end{lemma}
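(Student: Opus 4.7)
The plan is to show that any two vertices $a, b$ of $G_z$ are connected by a path of length at most two, which would establish connectivity. Because the Cayley graph is invariant under translation by elements of $\OK$, I first translate by $-b$ to reduce to the case $b = 0$; then $a \in \OK$ satisfies $|z| < 1$ and $|a - z| < 1$, so by the triangle inequality $|a| < 2$. Enumerating integer values of the norm forms $x^2 + y^2$ (Gaussian) and $x^2 + xy + y^2$ (Eisenstein) strictly below $4$ yields $|a|^2 \in \{0,1,2\}$ and $|a|^2 \in \{0,1,3\}$ respectively.

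The cases $|a| \in \{0, 1\}$ are immediate: either $a = 0$ or $a$ is a unit and there is already a direct edge from $0$ to $a$. So the substantive case is $|a| = \sqrt{2}$ (Gaussian) or $|a| = \sqrt{3}$ (Eisenstein). Here I would produce an intermediate $c \in G_z$ with both $c$ and $a - c$ units, giving a length-$2$ path $0$-$c$-$a$. The candidates for $c$ are the two lattice points at distance $1$ from both $0$ and $a$, i.e., the intersections of the unit circles around these points. Since multiplication by a unit is an isometry of $\C$ preserving $\OK$ and the edge relation, replacing $a, z, c_i$ by $ua, uz, uc_i$ for a unit $u$ leaves the setup unchanged; using this I reduce to $a = 1 + i$ with $\{c_1, c_2\} = \{1, i\}$ in the Gaussian case, or $a = 1 + \rho$ with $\{c_1, c_2\} = \{1, \rho\}$ in the Eisenstein case. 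In both cases $c_1 + c_2 = a$.

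The key computation is then the identity
\[|c_1 - z|^2 + |c_2 - z|^2 = |z|^2 + |a - z|^2 + \bigl(|c_1|^2 + |c_2|^2 - |a|^2\bigr),\]
obtained by expanding each square and using $c_1 + c_2 = a$. In the Gaussian case the bracketed term equals $0$, so the sum is less than $2$ and at least one of $|c_1 - z|, |c_2 - z|$ is less than $1$. In the Eisenstein case the bracketed term equals $-1$, so the entire sum is less than $1$ and in fact both $c_i$ lie in $G_z$. Either way some $c_i$ provides the desired length-$2$ path in $G_z$.

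I expect the main obstacle to be the Eisenstein case, since the larger step $\sqrt{3}$ from $0$ to $a$ leaves less room for an intermediate lattice point to fit inside the radius-$1$ disk around $z$. That the argument still succeeds -- comfortably, with both candidates landing in $G_z$ -- comes from the favorable $-1$ term appearing in the identity above, which reflects the tighter packing of the hexagonal lattice compared with the square lattice.
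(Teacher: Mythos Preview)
Your proof is correct and follows essentially the same approach as the paper: reduce by translation to one vertex being $0$, observe that any other vertex has norm-square in $\{0,1,2\}$ (Gaussian) or $\{0,1,3\}$ (Eisenstein), reduce by unit rotation to $a=1+i$ or $a=1+\rho$, and then show that one of the two candidate midpoints lies in the ball. The paper argues the last step by contradiction, adding the two inequalities $|z|^2<1$, $|a-z|^2<1$ and subtracting the two hypothetical inequalities $|c_1-z|^2\geq 1$, $|c_2-z|^2\geq 1$ to obtain $0<0$ (Gaussian) or $1<0$ (Eisenstein); your identity $|c_1-z|^2+|c_2-z|^2=|z|^2+|a-z|^2+(|c_1|^2+|c_2|^2-|a|^2)$ is exactly the same computation packaged more transparently, and indeed explains why those particular contradictions arise.
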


\begin{proof}  
Consider the open ball $B_z$ of radius $1$ centered on the point $z \in \C$. This ball must contain at least one element of $\OK$, and without loss of generality we may assume it contains $0$.

We will first consider the case where $\OK$ is the Eisenstein integers. Any other element of $\OK$ in $B_z$ must have distance $<2$ from the origin. There are only twelve such points, as shown in \autoref{Eisenstein-norm2}. Six of these (colored dark gray) are joined to $0$ by an edge, and the other six (colored light gray) are distance $2$ from the origin in the edge metric on the Cayley graph. 

\begin{figure}[h!]
\begin{subfigure}[t]{.47\textwidth} \centering
\labellist
\Large \hair 0pt
\pinlabel { \color{darkgray} $\tiny{  \displaystyle \rho}$} [tr] at 71 79
\pinlabel { \color{darkgray} $\tiny{  \displaystyle 1 }$} [tr] at 82 59
\pinlabel { \color{black} $\tiny{  \displaystyle 0 }$} [tr] at 58 59
\pinlabel { \color{gray} $\tiny{  \displaystyle \rho+1}$} [tr] at 112 81
\endlabellist
\qquad
\includegraphics[scale=1.2]{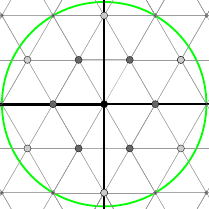}   \\[.2em]
\subcaption{Eisenstein integers within complex distance 2 of the origin.} 
\label{Eisenstein-norm2}
\end{subfigure}
\qquad
\begin{subfigure}[t]{.47\textwidth} \centering
\labellist
\Large \hair 0pt
\endlabellist
\qquad
\includegraphics[scale=1.2]{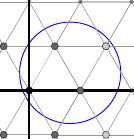}   \\[.2em]
\subcaption{An instance of a ball $B_z$ containing $0$ and $\rho+1$.} 
\label{InscribedAngles}
\end{subfigure}
\caption{}
\end{figure}
Up to symmetry, then, it suffices to assume that both $0$ and $\rho + 1$ are contained in $B_z$, and show that either $\rho$ or $1$ must be contained in $B_z$. See \autoref{InscribedAngles}.  In this case, $z$ must be contained in the intersection $B_0 \cap B_{\rho + 1}$, as in \autoref{EisensteinCircleIntersection}. But this intersection is contained in the union $B_{\rho} \cup B_1$, as in \autoref{EisensteinCircleUnion}. Hence $\rho \in B_z$ or $1 \in B_z$. 

\begin{figure}[h!]
\begin{subfigure}[t]{.45\textwidth} \centering
\labellist
\Large \hair 0pt
\pinlabel { \color{black} $\tiny{  \displaystyle 0 }$} [tr] at 40 43
\pinlabel { \color{gray} $\tiny{  \displaystyle \rho+1}$} [tr] at 95 66
\pinlabel { \color{darkgray} $\tiny{  \displaystyle \rho}$} [tr] at 53 64
\pinlabel { \color{darkgray} $\tiny{  \displaystyle 1 }$} [tr] at 65 43
\endlabellist
\qquad
\includegraphics[scale=1.2]{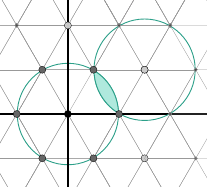}   \\[.2em]
\subcaption{The intersection $B_0 \cap B_{\rho + 1}$.} 
\label{EisensteinCircleIntersection}
\end{subfigure}
\qquad
\begin{subfigure}[t]{.45\textwidth} \centering
\labellist
\Large \hair 0pt
\endlabellist
\qquad
\includegraphics[scale=1.2]{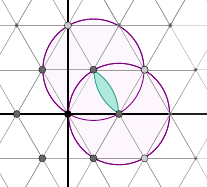}   \\[.2em]
\subcaption{The containment $B_0 \cap B_{\rho + 1} \subseteq B_{\rho} \cup B_{1} $.} 
\label{EisensteinCircleUnion}
\end{subfigure}
\caption{}
\end{figure}
%
%


Next suppose $\OK$ is the Gaussian integers. There are only eight points other than 0 that could be contained in the ball $B_z$, as in \autoref{Gaussian-norm2}. 
\begin{figure}[h!]
\labellist
\Large \hair 0pt
\pinlabel { \color{darkgray} $\tiny{  \displaystyle i}$} [tr] at 54 78
\pinlabel { \color{darkgray} $\tiny{  \displaystyle 1 }$} [tr] at 78 55
\pinlabel { \color{black} $\tiny{  \displaystyle 0 }$} [tr] at 54 55
\pinlabel { \color{gray} $\tiny{  \displaystyle i+1}$} [tr] at 90 78
\endlabellist
\qquad
\includegraphics[scale=1.3]{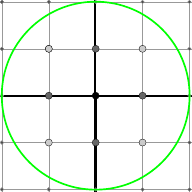}   \\[.2em]
\caption{Gaussian integers within complex distance 2 of the origin} 
\label{Gaussian-norm2}
\end{figure}
It suffices to check that, if $0$ and $i+1$ are contained in $B_z$, then so is one of $i$ or $1$. But $B_0 \cap B_{i + 1} \subseteq B_{i} \cup B_{1} $, as shown in \autoref{GaussianCircle}.

\begin{figure}[h!]
\begin{subfigure}[t]{.45\textwidth} \centering
\labellist
\Large \hair 0pt
\pinlabel { \color{black} $\tiny{  \displaystyle 0 }$} [tr] at 53 43
\pinlabel { \color{gray} $\tiny{  \displaystyle i+1}$} [tr] at 90 76
\pinlabel { \color{darkgray} $\tiny{  \displaystyle i}$} [tr] at 53 76
\pinlabel { \color{darkgray} $\tiny{  \displaystyle 1 }$} [tr] at 75 43
\endlabellist
\qquad
\includegraphics[scale=1.3]{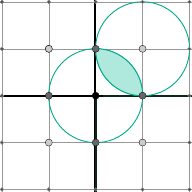}   \\[.2em]
\subcaption{The intersection $B_0 \cap B_{i + 1}$.} 
\label{EisensteinCircleIntersection}
\end{subfigure}
\qquad
\begin{subfigure}[t]{.45\textwidth} \centering
\labellist
\Large \hair 0pt
\endlabellist
\qquad
\includegraphics[scale=1.3]{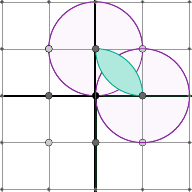}   \\[.2em]
\subcaption{The containment $B_0 \cap B_{i + 1} \subseteq B_{i} \cup B_{1} $.} 
\label{EisensteinCircleUnion}
\end{subfigure}
\caption{} \label{GaussianCircle}
\end{figure}

%
%
%
The result follows. 
\end{proof} 

\begin{lemma} \label{lem2G} Let $\OK$ be the Gaussian integers.  If $z_1$ and $z_2$ are any complex numbers, then there exist $r_1, r_2 \in \OK$ so that \[|z_1-r_1|<1, \quad |z_2-r_2|<1, \quad \text{and} \quad |(z_1-r_1)+(z_2-r_2)|<1.\]
\end{lemma}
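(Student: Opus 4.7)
The plan is to reduce to a finite case analysis using the symmetries of the Gaussian integers. Each of the three conditions is invariant under the substitution $(z_i, r_i) \mapsto (z_i + t_i, r_i + t_i)$ with $t_i \in \OK$, so I first translate $z_1$ and $z_2$ into the fundamental domain $F = \{x + yi : |x|, |y| \leq \tfrac{1}{2}\}$. In $F$ one has $|z_i| \leq \tfrac{\sqrt{2}}{2} < 1$, so $r_1 = r_2 = 0$ automatically satisfies the first two conditions; if moreover $|z_1+z_2| < 1$, this choice finishes the proof.

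Otherwise I exploit the dihedral symmetry group of order $8$ of $(F, \OK)$: the units $\{\pm 1, \pm i\}$ acting by simultaneous multiplication on $(z_i, r_i)$, together with the involution $z \mapsto i\bar z$ (which swaps real and imaginary parts and carries $\OK$ into itself). These let us assume $z_1 + z_2 = a + bi$ with $0 \leq b \leq a$. The containment $z_i \in F$ gives $a \leq 1$, and combined with $|z_1+z_2| \geq 1$ and $b \leq a$ this forces $a \geq \tfrac{1}{\sqrt{2}}$. Relabeling so that $\text{Re}(z_1) \geq \text{Re}(z_2)$ additionally gives $\text{Re}(z_1) \geq \tfrac{a}{2} \geq \tfrac{1}{2\sqrt 2}$.

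With these reductions in hand, I take $r_1 = 1$ and $r_2 = 0$. The bound $|z_2| \leq \tfrac{\sqrt 2}{2} < 1$ is immediate, while
\[ |z_1 - 1|^2 = (1 - \text{Re}(z_1))^2 + \text{Im}(z_1)^2 \leq \left(1 - \tfrac{1}{2\sqrt{2}}\right)^2 + \tfrac{1}{4} = \tfrac{11 - 4\sqrt{2}}{8} < 1. \]
The third bound $|z_1 + z_2 - 1|^2 = (a-1)^2 + b^2 < 1$ is equivalent to $a^2 + b^2 < 2a$, which follows from $b \leq a \leq 1$ via $a^2 + b^2 \leq 2a^2 \leq 2a$. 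The inequality is strict except when $a = b = 1$, in which case $z_1 = z_2 = \tfrac{1}{2} + \tfrac{1}{2} i$ is forced; this single degenerate corner I handle by taking instead $r_1 = 1 + i$ and $r_2 = 0$, which gives distances $\tfrac{\sqrt 2}{2}, \tfrac{\sqrt 2}{2}$, and $0$.

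The main obstacle is setting up the correct symmetry reduction and recognizing the isolated corner case, after which all the inequalities are elementary. A more symmetric-looking approach, picking a single Gaussian integer $r_3$ close to $z_1 + z_2$ first and then decomposing $r_3 = r_1 + r_2$ appropriately, runs into the difficulty that the intersection $B(z_1, 1) \cap B(r_3 - z_2, 1)$ can just barely fail to contain a Gaussian integer when $|z_1 + z_2 - r_3|$ is close to $\tfrac{\sqrt 2}{2}$, making the pigeonhole/casework route above essentially unavoidable.
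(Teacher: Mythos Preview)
Your argument is correct and a bit slicker than the paper's. Both proofs begin by translating $z_1,z_2$ into the closed square $S=\{|x|,|y|\le\tfrac12\}$ and observing that the only issue is controlling $|z_1+z_2|$. The paper then classifies which of the eight nontrivial translates of $S$ contains $z_1+z_2$ and, up to symmetry, treats two cases ($z_1+z_2\in S+1$ and $z_1+z_2\in S+1+i$) separately, each with its own choice of $(r_1,r_2)$ and a short chain of ad~hoc inequalities. You instead exploit the full dihedral symmetry of $(S,\OK)$ together with the swap $z_1\leftrightarrow z_2$ to funnel everything into the single wedge $0\le b\le a\le 1$ with $\mr{Re}(z_1)\ge a/2$, after which the choice $r_1=1$, $r_2=0$ handles the entire range in one stroke, leaving only the isolated corner $z_1=z_2=\tfrac12+\tfrac{i}{2}$. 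The trade-off is that the paper's two-case split mirrors the geometry of the translates (and parallels the Eisenstein argument that follows), while your symmetry reduction gives a shorter and more uniform computation.
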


\begin{proof} Up to the addition of elements of $\OK$, we may assume that $z_1, z_2$ are contained in the set
\[S = \left\{ z \in \C \; \middle| \; \frac{-1}{2} \leq \mr{Re}(z) \leq \frac 12, \;  \frac{-1}{2} \leq  \mr{Im}(z) \leq \frac 12  \right\},\]
the closure of the square fundamental domain for $\OK$ centered around zero  in \autoref{GaussianNorm1Tiles}. 
%
%

%
%
\begin{figure}[h!]
\labellist
\Large \hair 0pt
\pinlabel { \color{black} $\small{  \displaystyle S }$} [tr] at 58 48
\pinlabel { \color{violet} $\small{  \displaystyle \rho=i }$} [tr] at 70 71
\pinlabel { \color{violet} $\small{  \displaystyle 1 }$} [tr] at 77 48
\endlabellist
\qquad
\includegraphics[scale=1.5]{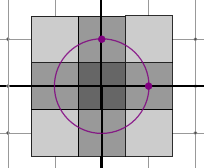}   \\[.2em]
\caption{The region $S$ and its translates $S \pm i$, $S\pm 1$, $S\pm i \pm 1$.} 
\label{GaussianNorm1Tiles}
\end{figure}

The sum $z_1+z_2$ must have both real and imaginary parts in the interval $[-1,1]$, so the sum is contained in $S$ or in one of its eight translates shown in \autoref{GaussianNorm1Tiles}. 
If $|z_1+z_2| \leq 1$, then we are done, so suppose otherwise.  Up to symmetry, we may consider two cases: $(z_1+z_2) \in (S+1)$ or $(z_1+z_2) \in (S+1+i)$. 

First suppose that $(z_1+z_2) \in (S+1)$. This means that at least one of $z_1$ and $z_2$ (say, $z_1$) must have real part at least $\frac 14$. Then we will replace $z_1$ by $(z_1-1)$. By assumption 
\[\frac {-3}{4} \leq \mr{Re}(z_1 - 1) \leq \frac{-1}{2} \qquad \frac {-1}{2} \leq \mr{Im}(z_1 - 1)  \leq \frac{1}{2}\]
and so $(z_1-1)$ lies in the rectangular region shown in \autoref{GaussianNorm1Tiles-quarter}. The number $(z_1 - 1)$ is contained in the unit ball, as
\[\left(  \frac {-3}{4} \right)^2 +  \left( \frac{1}{2} \right)^2 <1;\] see \autoref{GaussianNorm1Tiles-quarter}. But now $( (z_1 - 1) + z_2)$ is contained in $S$ and therefore in the unit ball, so we have completed this case. 
\begin{figure}[h!]
\labellist
\Large \hair 0pt
\endlabellist
\qquad
\includegraphics[scale=1.5]{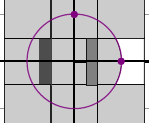}   \\[.2em]
\caption{The regions containing $z_1$ and $(z_1-1)$ are shaded gray and dark gray, respectively. The region containing $(z_1+z_2)$ is white.} 
\label{GaussianNorm1Tiles-quarter}
\end{figure}

Now suppose that $(z_1+z_2) \in (S+1+i)$. Again we may assume that  $z_1$ has real part at least $\frac 14$, and again we know $|z_1 - 1|<1$.  If $|z_1 - 1 - i| <1$, then we could replace $z_1$ by $(z_1 - 1 - i)$, and the sum $(z_1 - 1 - i) + z_2$ would be contained in $S$. So suppose $|z_1 - 1 - i|  \geq 1$. See \autoref{GaussianNorm1Tiles-Case2b}. In this case we will replace $z_1$ by $(z_1 - 1)$ and $z_2$ by $(z_2 - i)$, and then 
\[(z_1 - 1) + (z_2 - i) \in S\]
is contained in the unit ball as desired. 
It remains to show that $|z_2 - i| <1$. 
\begin{figure}[h!]
\begin{subfigure}{.425\textwidth} \centering
\labellist
\Large \hair 0pt
\endlabellist
\qquad
\includegraphics[scale=1.5]{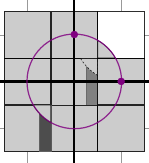}   \\[.2em]
\subcaption{The regions containing $z_1$ and $(z_1-1-i)$ are shaded gray and dark gray, respectively. The region containing $(z_1+z_2)$ is white.} 
\label{GaussianNorm1Tiles-Case2b}
\end{subfigure} 
\qquad 
\begin{subfigure}{.425\textwidth} \centering
\labellist
\Large \hair 0pt
\endlabellist
\qquad
\includegraphics[scale=1.5]{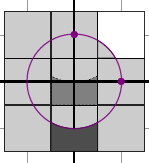}   \\[.2em]
\subcaption{The regions containing $z_2$ and $(z_2-i)$ are shaded gray and dark gray, respectively. The region containing $(z_1+z_2)$ is white.} 
\label{GaussianNorm1Tiles-Case2b2}
\end{subfigure}  
\caption{}
\end{figure}

Assume for contradiction that $|z_2 - i|  \geq 1$, so $z_2$ is contained in the region shown in \autoref{GaussianNorm1Tiles-Case2b2}. This assumption implies in particular that $$\mr{Im}(z_2) \leq 1- \frac{\sqrt{3}}{2}.$$  The assumption that $|z_1 - 1 - i|  \geq 1$ implies that \[\mr{Im}(z_1) \leq 1-\frac{\sqrt{7}}{4},\] as we see in \autoref{GaussianNorm1Tiles-Case2b}. But then 
\[\mr{Im}(z_1 + z_2) \leq \left(1- \frac{\sqrt{3}}{2}\right) +  \left(1- \frac{\sqrt{7}}{4}\right) < \frac 12,\]
which contradicts the premise that $z_1+ z_2$ is contained in the region $(S+1+i)$. So we conclude that $|z_2 - i| <1$, which concludes the proof. 
%
%
%
\end{proof} 

\begin{lemma} \label{lem2E} Let $\OK$ be the Eisenstein integers.  If $z_1$ and $z_2$ are any complex numbers, then there exist $r_1, r_2 \in \OK$ so that \[|z_1-r_1|<1, \quad |z_2-r_2|<1, \quad \text{and} \quad |(z_1-r_1)+(z_2-r_2)|<1.\]
\end{lemma}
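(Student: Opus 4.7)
The plan is to adapt the argument from \autoref{lem2G} for the Gaussian case, exploiting a crucial simplification of the Eisenstein setting: the six units $\{\pm 1, \pm \rho, \pm \rho^2\}$ of $\OK$ are precisely the six Voronoi neighbors of $0$, so single-unit shifts alone will suffice (in contrast to the double shifts needed for $\bZ[i]$, as in the subcase $z_1+z_2 \in S+1+i$).

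By translating $z_1,z_2$ by elements of $\OK$, I reduce to $z_1,z_2$ lying in the closed Voronoi fundamental domain $S$ of $0 \in \OK$: a regular hexagon of circumradius $1/\sqrt 3$ and inradius $1/2$. Then $|z_i| \leq 1/\sqrt 3 < 1$ and $|z_1+z_2| \leq 2/\sqrt 3 < 2$. If $|z_1+z_2| < 1$, then $r_1=r_2=0$ works. Otherwise $1 \leq |z_1+z_2| \leq 2/\sqrt 3$, and the first nontrivial step is the geometric claim that the Minkowski sum $2S$ is covered by the seven Voronoi cells $S$ and $S+u$ for $u$ a unit. Any other $w \in \OK \setminus \{0\}$ has $|w| \geq \sqrt 3$, so the cell $S+w$ lies outside the open disk of radius $2/\sqrt 3$ and can meet $2S \subseteq \{z : |z| \leq 2/\sqrt 3\}$ only at the single vertex of $2S$ in the direction of $w$; by direct inspection, each such vertex is simultaneously a vertex of $S+u$ for two distinct units $u$, so we may always choose $u$ to be a unit.

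Multiplying $z_1,z_2$ by $u^{-1}$ (a unit, hence preserving $\OK$ and $S$), I assume $u=1$, so $\mr{Re}(z_1+z_2) \geq 1/2$ and, swapping $z_1$ with $z_2$ if needed, $\mr{Re}(z_1) \geq 1/4$. Taking $r_1=1,r_2=0$, the bounds $|z_2| < 1$ and $|(z_1+z_2)-1| \leq 1/\sqrt 3 < 1$ are immediate from $z_2 \in S$ and $z_1+z_2 \in S+1$ respectively, while the remaining bound $|z_1-1| < 1$ reduces to maximizing $(x-1)^2+y^2$ over $\{z \in S : \mr{Re}(z) \geq 1/4\}$, attained at the corners $(1/4, \pm \sqrt 3/4)$ where $\mr{Re}=1/4$ meets the slanted edges of $S$, with value $9/16+3/16 = 3/4 < 1$. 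The main obstacle I anticipate is the covering claim in the second paragraph: specifically, handling the boundary case where $z_1+z_2$ sits at a vertex of $2S$ equidistant from two units and one non-unit Eisenstein integer (such as $1+\rho$) of norm $\sqrt 3$, where the ambiguity must be carefully resolved in favor of a unit. The other computations are routine hexagonal trigonometry.
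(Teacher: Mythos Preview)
Your proposal is correct and takes a genuinely different route from the paper's proof. The paper normalizes by the position of $z_1$: after reducing to $z_1,z_2\in S$ and assuming $|z_1+z_2|\geq 1$, it observes that one of the $z_i$ (say $z_1$) lies outside the disc of radius $\tfrac12$, hence in one of six small corner regions of $S$; it then takes $z_1$ in the topmost region $A$, proves via a system of inequalities that $z_2$ must lie in the upper half-plane, splits into two cases according to the quadrant of $z_2$, and in each case translates $z_1$ by $\rho^4$ or $\rho^5$ and checks (again via inequality systems) that the new sum lands in the unit disc.

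You instead normalize by the direction of the \emph{sum}: the covering claim $2S\subseteq S\cup\bigcup_{|u|=1}(S+u)$ lets you rotate so that $z_1+z_2\in S+1$, whence $\mathrm{Re}(z_1+z_2)\geq \tfrac12$ and (after swapping) $\mathrm{Re}(z_1)\geq\tfrac14$; a single shift $r_1=1$, $r_2=0$ then works, with the only nontrivial bound $|z_1-1|<1$ reduced to a convex maximization over a quadrilateral. Your approach eliminates the paper's case split on $z_2$ and its appeal to infeasibility of inequality systems, at the cost of the covering claim for $2S$---which is itself elementary once you note that any lattice point with $|w|\geq\sqrt3$ has its Voronoi cell meeting $2S$ only at a vertex, and that each vertex of $2S$ is shared with two unit cells. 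The boundary issue you flag (vertices of $2S$ equidistant from two units and one norm-$\sqrt3$ point) is real but harmless, exactly as you say: just pick a unit.
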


\begin{proof} Consider the rectangular fundamental domains for $\OK$ shown in \autoref{EisensteinRectangles}. Up to the addition of elements of $\OK$, we may assume that $z_1$ and $z_2$ are in (the closure of) the fundamental domain centered about zero, 
\[S = \left\{ z \in \C \; \middle| \; \frac{-1}{2} \leq \mr{Re}(z) \leq \frac 12, \;  \frac{-\sqrt{3}}{4} \leq  \mr{Im}(z) \leq \frac{\sqrt{3}}{4} \right\}.\]
The region $S$ is shaded medium gray in \autoref{EisensteinRectangles}.  Observe that $|z_1|, |z_2| <1$. 

\begin{figure}[h!]
\begin{subfigure}{.4\textwidth} \centering
\labellist
\Large \hair 0pt
\pinlabel { \color{violet} $\small{  \displaystyle \rho }$} [tr] at 70 72
\pinlabel { \color{violet} $\small{  \displaystyle 1 }$} [tr] at 80 53
\endlabellist
\qquad
\includegraphics[scale=1.5]{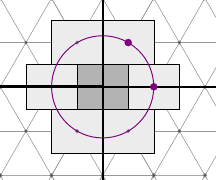}   \\[.2em]
\subcaption{The unit circle and a rectangular fundamental domain for $\OK$ along with six of its translates.} 
\label{EisensteinRectangles}
\end{subfigure} \qquad
\begin{subfigure}{.4\textwidth} \centering
\labellist
\Large \hair 0pt
\pinlabel { \color{violet} $\small{  \displaystyle \rho }$} [tr] at 70 72
\pinlabel { \color{violet} $\small{  \displaystyle 1 }$} [tr] at 80 53
\endlabellist
\qquad
\includegraphics[scale=1.5]{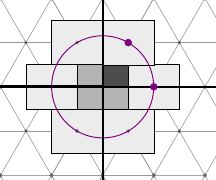}   \\[.2em]
\subcaption{We may assume $z_1$ is in the dark gray region.} 
\label{EisensteinRectangles-quadrant}
\end{subfigure}
\caption{}
\end{figure}

By our symmetry under reflection in the real and imaginary axes, we may assume that Im$(z_1)$, Re$(z_1) \geq 0$. Thus $z_1$ is in the quadrant shaded dark gray in \autoref{EisensteinRectangles-quadrant}, and 
$$ 
 \frac{-1}{2}  \leq \mathrm{Re}(z_1 + z_2) \leq 1 , \qquad   \qquad
 \frac{-\sqrt{3}}{4}   \leq  \mathrm{Im}(z_1 + z_2) \leq  \frac{\sqrt{3}}{2}.   $$ 
We deduce that the sum $(z_1 + z_2)$ must be contained in the rectangle shown in \autoref{EisensteinRectangles-sum}.

\begin{figure}[h!]
\begin{subfigure}{.4\textwidth} \centering
\labellist
\Large \hair 0pt
\pinlabel { \color{violet} $\small{  \displaystyle \rho }$} [br] at 58 50
\pinlabel { \color{violet} $\small{  \displaystyle 1 }$} [br] at 70 30
\pinlabel { \color{violet} $\small{  \displaystyle \rho -1 }$} [br] at 25 49
\endlabellist
\qquad
\includegraphics[scale=2]{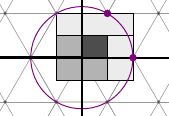}   \\[.2em]
\subcaption{The sum $z_1+z_2$ is contained in the rectangle} 
\label{EisensteinRectangles-sum}
\end{subfigure} \qquad
\begin{subfigure}{.4\textwidth} \centering
\labellist
\Large \hair 0pt
\pinlabel { \color{violet} $\small{  \displaystyle \rho }$} [br] at 58 50
\pinlabel { \color{violet} $\small{  \displaystyle 1 }$} [br] at 70 30
\pinlabel { \color{violet} $\small{  \displaystyle \rho -1 }$} [br] at 25 49
\pinlabel { \color{darkgray} $\small{  \displaystyle A }$} [br] at 70 42
\pinlabel { \color{gray} $\small{  \displaystyle B }$} [br] at 70 20
\endlabellist
\qquad
\includegraphics[scale=2]{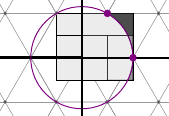}   \\[.2em]
\subcaption{The region $A$ is shown in dark gray and $B$ in medium gray} 
\label{EisensteinRectangles-LargeSum}
\end{subfigure} 
\caption{}
\end{figure}

If $|z_1+z_2|<1$ then we are done, so assume otherwise. In this case, either $z_1+z_2=\rho-1$, or $(z_1+z_2)$ is in one of the regions $A$ or $B$ shown in \autoref{EisensteinRectangles-LargeSum}.

If 
$z_1+z_2=\rho-1$, then we must have the extremal values of $z_1$ and $z_2$ given by $$ z_1 =  \left(\frac{\sqrt{3}}{4}\right)i \qquad \text{ and }  z_2 = -\frac12 + \left(\frac{\sqrt{3}}{4}\right)i, $$ as shown in \autoref{EisensteinRectangles-SumCorner}. In this case both $z_1$ and $z_1+z_2$ are strictly within distance $1$ of $\rho-1$. We may then replace $z_1$ by $(z_1 - \rho +1)$, so
$$ |z_1 - \rho +1| <1 \qquad \text{and} \qquad |(z_1 - \rho +1) + z_2| =0 <1.$$

Next suppose $z_1 + z_2 \in A$. Necessarily $z_1 \neq 0$, otherwise $z_1+z_2 = z_2$ would have norm strictly less than one. But then both $z_1$ and $z_1 + z_2$ are  within strict distance 1 of $\rho$, as in \autoref{EisensteinRectangles-SumA}. We may replace $z_1$ with $z_1 - \rho$, so
$$ |z_1 - \rho | <1 \qquad \text{and} \qquad |(z_1 - \rho) + z_2| <1.$$

\begin{figure}[h!]
\begin{subfigure}{.4\textwidth} \centering
\labellist
\Large \hair 0pt
\pinlabel { \color{violet} $\small{  \displaystyle \rho }$} [br] at 58 50
\pinlabel { \color{violet} $\small{  \displaystyle 1 }$} [br] at 70 30
\pinlabel { \color{violet} $\small{  \displaystyle \rho -1 }$} [br] at 25 49
\pinlabel { \color{red} $\small{  \displaystyle z_2}$} [br] at 26 32
\pinlabel { \color{red} $\small{  \displaystyle z_1}$} [br] at 38 32
\endlabellist
\qquad
\includegraphics[scale=2]{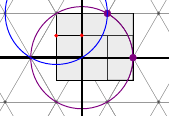}   \\[.2em]
\subcaption{The case $z_1+z_2 = \rho-1$} 
\label{EisensteinRectangles-SumCorner}
\end{subfigure} \qquad
\begin{subfigure}{.4\textwidth} \centering
\labellist
\Large \hair 0pt
\pinlabel { \color{violet} $\small{  \displaystyle \rho }$} [br] at 58 50
\pinlabel { \color{violet} $\small{  \displaystyle 1 }$} [br] at 70 30
\pinlabel { \color{violet} $\small{  \displaystyle \rho -1 }$} [br] at 25 49
\pinlabel { \color{darkgray} $\small{  \displaystyle A }$} [br] at 70 42
\endlabellist
\qquad
\includegraphics[scale=2]{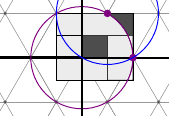}   \\[.2em]
\subcaption{The case $ z_1 + z_2 \in A$ } 
\label{EisensteinRectangles-SumA}
\end{subfigure} \qquad
\caption{}
\end{figure}

Finally, suppose that $z_1 + z_2 \in B$. It follows that  $$ \mathrm{Re}(z_1 + z_2) \geq \frac{ \sqrt{13}}{4} > \frac12. $$  This implies that Re$(z_i) \geq \frac 14 $ for at least one choice of $i=1, 2$, so $z_i$ is contained in the dark gray region in \autoref{EisensteinRectangles-SumB}. But then both $z_i$ and $z_1 +z_2$ are within distance strictly less than $1$ of $1$, as in \autoref{EisensteinRectangles-SumB}. We may replace $z_i$ with $z_i-1$, and
$$ |z_i - 1| <1 \qquad \text{and} \qquad |z_1 +  z_2 -1| <1.$$

\begin{figure}[h!] \centering
\labellist
\Large \hair 0pt
\pinlabel { \color{violet} $\small{  \displaystyle \rho }$} [br] at 58 50
\pinlabel { \color{violet} $\small{  \displaystyle 1 }$} [br] at 70 30
\pinlabel { \color{violet} $\small{  \displaystyle \rho -1 }$} [br] at 25 49
\pinlabel { \color{gray} $\small{  \displaystyle B }$} [br] at 70 20
\endlabellist
\qquad
\includegraphics[scale=2]{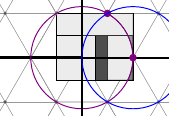}   \\[.2em]
\caption{ The case $ z_1 + z_2 \in B$. The region of  $S$ with real part at least $\frac14$ is shaded dark gray.} 
\label{EisensteinRectangles-SumB}
\end{figure} 

This concludes the proof. 
\end{proof}

\section{Connectivity results}

Our results will be a consequence of connectivity/non-connectivity results for complexes of augmented partial frames. These complexes were introduced by Church--Putman \cite{CP} to give a topological proof of Bykovski\u\i's presentation of $\St_n(\Q)$. The original proof used more algebraic methods \cite{Byk}.

\subsection{Definitions and previously known results} We say that a simplicial complex is  \emph{$d$-spherical} if it is simultaneously $d$-dimensional and $(d-1)$-connected, in which case it is homotopy equivalent to a wedge of $d$-spheres. A simplicial complex is \emph{Cohen--Macaulay} of dimension $d$ if it is $d$-spherical and the link of every $k$-simplex is $(d-k-1)$-spherical. We follow the usual convention that $(-1)$-connected means non-empty. An example of a Cohen--Macaulay complex is the Tits building:

\begin{definition} For a finite-dimensional vector space $V$ over a field $\KK$, let $\cT(V)$ denote the geometric realization of the poset of proper non-empty subspaces of $V$ ordered by inclusion. When $V=\KK^n$, write $\cT_n(\KK)$ for $\cT(\KK^n)$ and call it the \emph{$n$th Tits building} of $\KK$.\end{definition} 

The following theorem seems to have first appeared in Solomon \cite{Solomon} in the case of finite fields; see Garland \cite[Theorem 2.2]{Garland} and Quillen \cite[Theorem 2]{Quillen-Ki} for the general case. 

\begin{theorem}[Solomon--Tits] \label{SolomonTits} For $\KK$ a field, $\cT_n(\KK)$ is Cohen--Macaulay of dimension $(n-2)$.\end{theorem}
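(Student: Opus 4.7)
I would prove this by induction on $n$. The base case $n = 2$ is immediate: $\cT_2(\KK)$ is a non-empty discrete set of lines in $\KK^2$, hence $0$-spherical, and the link of each vertex is empty, hence $(-1)$-spherical. The dimension bound $\dim \cT_n(\KK) = n-2$ is clear, since any chain $V_0 \subsetneq \cdots \subsetneq V_k$ of proper non-empty subspaces of $\KK^n$ satisfies $k \leq n-2$.

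For the inductive step, the Cohen--Macaulay condition splits into two tasks: (a) the link of every $k$-simplex is $(n-k-3)$-spherical for $k \geq 0$, and (b) the whole complex $\cT_n(\KK)$ is $(n-2)$-spherical. I would dispatch (a) first, by observing that the open intervals $(V_i, V_{i+1})$ in the subspace lattice of $\KK^n$ are isomorphic to posets of proper non-empty subspaces of $V_{i+1}/V_i$, which gives the natural decomposition
\begin{equation*}
\Link(\sigma) \cong \cT(V_0) \ast \cT(V_1/V_0) \ast \cdots \ast \cT(V_k/V_{k-1}) \ast \cT(\KK^n/V_k)
\end{equation*}
for any $k$-simplex $\sigma = (V_0 \subsetneq \cdots \subsetneq V_k)$. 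Each factor is the Tits building of a vector space of some dimension $d_i \geq 1$, with $\sum_i d_i = n$; by the inductive hypothesis it is $(d_i-2)$-spherical. The join-connectivity formula (the join of a $p$-connected and a $q$-connected complex is $(p+q+2)$-connected, and dimension is additive plus one) then shows the join is $(n-k-3)$-spherical.

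For (b), I would invoke the classical Solomon--Tits apartment argument. Fix a basis $e_1, \ldots, e_n$ of $\KK^n$ and let $\Sigma$ be the apartment spanned by the subspaces $\langle e_i : i \in S\rangle$ for $\emptyset \neq S \subsetneq \{1, \ldots, n\}$; this is the barycentric subdivision of $\partial\Delta^{n-1}$, an $(n-2)$-sphere. Setting $\ell = \langle e_n\rangle$ and $H = \langle e_1, \ldots, e_{n-1}\rangle$, the partially-defined poset flows $V \mapsto V + \ell$ and $V \mapsto V \cap H$ furnish a deformation retraction of $\cT_n(\KK)$ onto a union built from the star of $\ell$ (contractible) and $\cT(H) \cong \cT_{n-1}(\KK)$, exhibiting $\cT_n(\KK)$ up to homotopy as a wedge of $(n-2)$-spheres obtained by suspending $\cT_{n-1}(\KK)$. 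Sphericity then follows from the inductive hypothesis applied to $\cT_{n-1}(\KK)$.

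The principal obstacle is formalizing step (b): the flows $V \mapsto V + \ell$ and $V \mapsto V \cap H$ are not globally defined---they fail exactly on hyperplanes complementary to $\ell$ (respectively lines complementary to $H$)---and making the retraction precise requires either a careful case analysis of these exceptional strata or an application of Quillen's Theorem A to a suitable poset fibration. By contrast, the link decomposition in (a) and the ensuing join-connectivity arithmetic are purely formal and present no real difficulty.
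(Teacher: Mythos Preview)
The paper does not supply its own proof of this theorem: it is stated with references to Solomon, Garland, and Quillen and then used as a black box throughout. So there is nothing in the paper to compare your proposal against.

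That said, a comment on your sketch. Part (a) is correct and entirely standard. Part (b), however, contains a genuine error in the target homotopy type, not merely a difficulty of formalization. You claim the two flows exhibit $\cT_n(\KK)$ as ``a wedge of $(n-2)$-spheres obtained by suspending $\cT_{n-1}(\KK)$,'' built from the star of $\ell$ together with $\cT(H)$ for your single fixed hyperplane $H$. This cannot be right: a single suspension preserves the number of wedge summands, yet $\St_n(\KK)$ has far larger rank than $\St_{n-1}(\KK)$ (over $\bF_q$ the ranks are $q^{\binom{n}{2}}$ and $q^{\binom{n-1}{2}}$ respectively). The correct picture, using only the flow $V \mapsto V+\ell$, is this: the subposet $Q$ of proper nonzero subspaces that are \emph{not} hyperplanes complementary to $\ell$ is contractible (the flow is defined exactly on $Q$ and retracts it onto the cone on $\ell$), and $\cT_n(\KK)$ is obtained from $Q$ by coning off, for \emph{each} hyperplane $H'$ complementary to $\ell$, its downward link $\cT(H')\cong\cT_{n-1}(\KK)$. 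Hence
\[
\cT_n(\KK)\ \simeq\ \bigvee_{\substack{H'\text{ hyperplane}\\ \ell\not\subset H'}} \Sigma\,\cT_{n-1}(\KK),
\]
a wedge indexed by the many complementary hyperplanes, not a single suspension. Your outline as written discards all but one of these hyperplanes; the second flow $V\mapsto V\cap H$ is the dual of the first and does not repair this. With this correction in place the inductive step goes through.
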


Since $\cT_n(\KK)$ is $(n-2)$-spherical, 
\[\St_n(\KK) \coloneqq \widetilde H_{n-2}(\cT_n(\KK))\] is the only possible non-zero reduced homology group, called the $n$th \emph{Steinberg module}. It may be helpful to remark that for $\OO$ a Dedekind domain with field of fractions $\KK$, there is a natural bijection between summands of $\OO^n$ and subspaces of $\KK^n$; see e.g. Church--Farb--Putman \cite[Lemma 2.3]{CFP}.

%


\subsubsection{Partial frames}

 We now recall the definition of the complexes of (augmented) partial frames. The complex of partial frames is closely related to the complex of partial bases considered (for example) by Maazen \cite{Maazen} and van der Kallen \cite{vdKallen} 
in the context of homological stability. See Church--Farb--Putman \cite{CFP} and \cite{MPWY} for a discussion of the relationship between the complex of partial bases/frames and generators of Steinberg modules. From now on, we let $\OO$ denote an integral domain. 

\begin{convention}
In this paper, a \emph{line} in $\OO^n$ will mean a rank one free summand. 
\end{convention}

\begin{definition}
	A vector $\vec v \in \OO^n$ is called \emph{primitive} if its span is a direct summand. In that case, we denote its span by $v$. Similarly if $v$ is a line, we let $\vec v$ denote a primitive vector which spans that line. The vector $\vec v$ is well-defined up to multiplication by a unit of $\OO$.\end{definition}

\begin{definition}A \emph{partial frame} is an unordered collection of lines $ v_0, \ldots,  v_p$ in $\OO^n$ such that there are lines $ v_{p+1},\ldots,  v_{n-1}$ so that the natural map $v_0 \oplus \cdots \oplus  v_{n-1} \to \OO^n$ is an isomorphism.\end{definition}

\begin{definition} \label{complexesDef}
	Let $M$ be a finite-rank free $\OO$-module. The \emph{complex of partial frames} $B(M)$ is the simplicial complex with $p$-simplices given by the set of partial frames in $M$ of cardinality $p+1$. A simplex $\{w_0,\ldots,  w_q\}$ is a face of $\{v_0,\ldots,  v_p\}$ if and only if $\{ w_0,\ldots,  w_q \} \subseteq \{ v_0,\ldots,  v_p \}$. We write $B_n(\OO)$ for $B(\OO^n)$. \end{definition}

We consider $\OO^m$ as a submodule of $\OO^{m+n}$ by the inclusion of the first $m$ coordinates, and let $e_1,\ldots, e_m$ denote the lines spanned by the standard basis vectors of $\OO^m$. As in Church--Putman \cite[Definition 4.1]{CP}, we use the shorthand
\[B_n^m(\OO) \coloneqq \mr{Link}_{B_{n+m}(\OO)}(e_1,\ldots,  e_m).\]
Observe that $B_n^0(\OO)$ is equal to $B_n(\OO)$.

\begin{theorem} \label{thm:maazen}
	If $\OO$ is a Euclidean domain, then the simplicial complexes $B_n^m(\OO)$ are Cohen--Macaulay of dimension $(n-1)$.
\end{theorem}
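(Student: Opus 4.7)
The plan is to induct on $n$, proving the statement for all $m \geq 0$ simultaneously. The base cases $n = 0$ (empty complex, vacuously $(-2)$-connected and $(-1)$-dimensional) and $n = 1$ (a nonempty discrete set, since $e_{m+1}$ gives a vertex) are immediate.

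For the inductive step, I would first reduce the Cohen--Macaulay condition to $(n-2)$-connectivity of $B_n^m(\OO)$. Given a $k$-simplex $\sigma = \{v_0, \ldots, v_k\}$ of $B_n^m(\OO)$, the lines $v_0, \ldots, v_k, e_1, \ldots, e_m$ form a partial frame of $\OO^{n+m}$. Choosing any extension to a full frame produces an element of $\GL_{n+m}(\OO)$ carrying this partial frame to the standard partial frame $\{e_1, \ldots, e_{m+k+1}\}$, and this induces a simplicial isomorphism
\[
\mr{Link}_{B_n^m(\OO)}(\sigma) \;\cong\; B_{n-k-1}^{m+k+1}(\OO).
\]
By the induction hypothesis applied to the pair $(n-k-1,\, m+k+1)$, this link is Cohen--Macaulay of dimension $(n-k-2)$, which is exactly the condition in the definition of Cohen--Macaulay. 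Hence only $(n-2)$-connectivity of $B_n^m(\OO)$ remains.

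For the connectivity, I would use a Maazen--van der Kallen style surgery argument powered by the Euclidean function $\varphi$ on $\OO$. Extend $\varphi$ to a norm $\Phi$ on primitive vectors of $\OO^{n+m}$ by, say, $\Phi(\vec v) = \sum_i \varphi(v_i)$ (with $\varphi(0) = 0$), and to lines by minimizing over generators. Given a simplicial map $f\colon S^{n-2} \to B_n^m(\OO)$, let $v$ be a vertex in the image with maximum norm. Writing a generator $\vec v$ in terms of $e_1, \ldots, e_m$ and primitive vectors representing the simplices in $\mr{Link}_f(v)$, the Euclidean algorithm produces a replacement line $v'$ of strictly smaller norm such that $v'$ together with each simplex $\tau \in \mr{Link}_f(v)$ is still a partial frame extending $\{e_1, \ldots, e_m\}$. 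This yields a homotopy of $f$ replacing $v$ by $v'$, strictly decreasing the (lexicographic) multiset of norms appearing in the image. Since $\varphi$ takes values in $\Z_{\geq 0}$, the process terminates; after finitely many surgeries $f$ is null-homotopic.

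The main obstacle is the surgery step: one must find a single replacement vertex $v'$ that works simultaneously for every simplex in the star of $v$ and achieves a strict decrease in $\Phi$. The room for the first requirement comes from the inductive hypothesis: $\mr{Link}_{B_n^m(\OO)}(v) \cong B_{n-1}^{m+1}(\OO)$ is $(n-3)$-connected, so the finite subcomplex $\mr{Link}_f(v)$ can be augmented to accommodate $v'$. The strict decrease in $\Phi$ is the pure input of the Euclidean algorithm, applied to the component of $\vec v$ with maximal $\varphi$-value using a carefully chosen $e_i$ or neighboring vector as divisor.
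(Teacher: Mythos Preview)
Your reduction of the Cohen--Macaulay condition to $(n-2)$-connectivity via the link identification $\mr{Link}_{B_n^m(\OO)}(\sigma) \cong B_{n-k-1}^{m+k+1}(\OO)$ is correct and matches the paper. The difference is in how connectivity is established: the paper does not reprove Maazen's argument but instead cites his result for the semisimplicial set $\mr{O}_n^m(\OO)_\bullet$ of \emph{ordered partial bases} (vectors, not lines), and then transfers the connectivity to $B_n^m(\OO)$ by observing that the projection $\mr{O}_n^m(\OO)_\bullet \to B_n^m(\OO)_\bullet$ admits a section (choose a representative vector for each line). This is shorter and avoids rerunning the surgery.

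Your surgery sketch, by contrast, has a genuine gap. The step ``the Euclidean algorithm produces a replacement line $v'$ of strictly smaller norm such that $v'$ together with each $\tau \in \mr{Link}_f(v)$ is still a partial frame'' is precisely the heart of the matter, and the argument you give does not supply it. Two specific problems: first, the sum norm $\Phi(\vec v) = \sum_i \varphi(v_i)$ is not the one that makes the argument run; Maazen-style proofs use the Euclidean function on a \emph{single} coordinate (compare the function $F$ in the paper's \autoref{Defn:fF} and \autoref{LinkB}), because subtracting a multiple of a basis vector only controls one coordinate at a time. Second, and more seriously, your appeal to the inductive hypothesis is misdirected: the $(n-3)$-connectivity of $\mr{Link}_{B_n^m(\OO)}(v)$ does not help you \emph{find} a single vertex $v'$; rather, it lets you extend the map $\mr{Link}_{S^{n-2}}(x) \to \mr{Link}_{B_n^m(\OO)}(v)$ over a disk. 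The actual work is then to ensure that this extension can be taken to land in the subcomplex of vertices with strictly smaller last coordinate, and this requires constructing a simplicial retraction from the full link onto that subcomplex (as in \autoref{LinkB}). Without such a retraction you cannot guarantee the null-homotopy avoids introducing new bad vertices, and the induction does not close.
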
 

We prove this theorem using Maazen \cite{Maazen}, though it can also be established by adapting the techniques of Church--Putman \cite[Theorem 4.2]{CP}, who prove it for the case $\OO=\Z$.

\begin{proof}[Proof of \autoref{thm:maazen}] Since the link of a $p$-simplex in $B_n^m(\OO)$ is isomorphic to $B_{n-p-1}^{m+p+1}(\OO)$, it suffices to show that the complexes $B_n^m(\OO)$ are $(n-2)$-connected. Let $B_n^m(\OO)_\bullet$ denote the semi-simplicial set of \emph{ordered partial frames}. That is, $B_n^m(\OO)_p$ is the set of ordered $(p+1)$-tuples $( v_0,\ldots, v_p)$ with underlying set $\{ v_0,\ldots, v_p\}$ a $p$-simplex of $B_n^m(\OO)$. The $i$th face map forgets the $i$th line. By  \cite[Lemma 3.16]{KM2}, it suffices to show $B_n^m(\OO)_\bullet$ is $(n-2)$-connected. Let $\mr{O}_n^m(\OO)_\bullet$ denote the semi-simplicial set of \emph{ordered partial bases}. That is, $\mr{O}_n^m(\OO)_p$ is the set of ordered $(p+1)$-tuples $(\vec v_0,\ldots,\vec v_p)$ with $\{ v_0,\ldots, v_p\}$ a $p$-simplex of $B_n^m(\OO)$. As before, the $i$th face map forgets the $i$th vector. Maazen \cite[Section III.4, Theorem 4.2, and Corollary 4.5]{Maazen} proved that the barycentric subdivision of $||\mr{O}_n^m(\OO)_\bullet||$ is $(n-2)$-connected and hence $||\mr{O}_n^m(\OO)_\bullet||$ is $(n-2)$-connected. Consider the natural projections:
\begin{align*}
\mr{O}_n^m(\OO)_\bullet & \longrightarrow B_n^m(\OO)_\bullet \\
(\vec v_0, \vec v_1, \ldots, \vec v_p) & \longmapsto (\mr{span}(\vec v_0), \mr{span}(\vec v_1), \ldots, \mr{span}(\vec v_p) ).
\end{align*}
By picking a representative $\vec v$ for all lines $v$, one can construct a splitting of this map; see \cite[Proposition 2.13]{MPWY}. Thus, $||B_n^m(\OO)_\bullet||$ and hence $B_n^m(\OO)$ is $(n-2)$-connected.
\end{proof}



\subsubsection{Augmented partial frames} One of the innovations of Church--Putman \cite{CP} is the introduction of a simplicial complex of augmented partial frames, obtained by adding new ``additive'' simplices to $B_n^m(\OO)$. These correspond to certain relations in Steinberg modules.

\begin{definition}An \emph{augmented partial frame} is an unordered collection of lines $v_0,\ldots,  v_p$ such that, possibly after re-indexing,  $v_1, \ldots,  v_p$ is a partial frame and there are units $u_1,u_2$ in $\OO$ so that $\vec v_0=u_1 \vec v_1 + u_2 \vec v_2$.\end{definition}

\begin{definition}\label{def:augmented-frames} Let $M$ be a finite-rank free $\OO$-module. The \emph{complex of augmented partial frames} $BA(M)$ is the simplicial complex with $p$-simplices given by the union of the set of partial frames in $M$ of cardinality $p+1$ and the set of augmented partial frames in $M$ of cardinality $p+1$. A set $ w_0,\ldots,  w_q$ is a face of $ v_0,\ldots,  v_p$ if and only if $\{ w_0,\ldots,  w_q \} \subseteq \{ v_0,\ldots,  v_p \}$. We generally write $BA_n(\OO)$ for $BA(\OO^n)$. \end{definition}

We adapt the following notation from Church--Putman \cite[Definitions 4.7 and 4.11]{CP}.

\begin{definition} Fix $n>0$. Let $BA_n^m(\OO)$ be the  full subcomplex of \[\mr{Link}_{BA_{n+m}(\OO)}( e_1,\ldots,  e_m)\]
of simplices $ v_0,\ldots,v_p$ such that $v_i \not\subseteq \mr{span}(\vec e_1,\dots, \vec e_m)$ for all $i$.\end{definition}

\begin{definition}For $\sigma=\{w_0,\ldots,w_q\}$, let $\Linkh_{BA_n^m(\OO)}(\sigma)$ denote the full subcomplex of $\Link_{BA_n^m(\OO)}(\sigma)$ of simplices $\{v_0,\dots,v_p\}$ such that for all $i$, \[v_i \not\subseteq \mr{span}(\vec e_1,\dots, \vec e_m,\vec w_0,\dots,\vec w_q).\]
\end{definition}

Observe that $BA_n^m(\OO)$ is equal to $\Linkh_{BA_{n+m}(\OO)}(e_1,\dots,e_m)$. The span of the lines in a $p$-simplex $\{v_0,\ldots,v_p\}$ of $BA_n^m(\OO)$ has rank $p+1$ or $p$. The following definition, analogous to \cite[Definition 4.9]{CP}, describes the latter type of simplices:

\begin{definition} \label{DefnAdditive} Let $\sigma = \{v_0,\ldots,v_p\}$ be a $p$-simplex of $BA_n^m(\OO)$. 
	
	\begin{enumerate}[(i)]
		\item We say $\sigma$ is an \emph{internally additive} simplex if $\vec v_i =  u_k\vec v_k + u_j\vec v_j$ for some $i,j,k$ and some units $u_k,u_j$ in $\cO$.
		\item We say $\sigma$ is an \emph{externally additive} simplex if $\vec v_i =  u_k\vec e_k + u_j\vec v_j$ for some $i,j,k$ and some units $u_k,u_j$ in $\cO$. 
		
		More generally, if $\sigma$ is a simplex in $\Linkh_{BA_n^m(\OO)}(\{w_0, \ldots, w_p\})$, we say that $\sigma$ is an \emph{externally additive} simplex if $\vec v_i = u_k \vec v_k + u \vec w$ for some units $u_k,u$ in $\cO$, and primitive vector $\vec w$ spanning $e_1, \ldots, e_m , w_0,\ldots,w_{p-1}$ or  $w_p$.
		\item We say that a simplex is \emph{additive} if it is externally or internally additive.
	\end{enumerate}
\end{definition}

We will also need a subcomplex of certain links with control on the last coordinate:

\begin{definition} \label{Defn:fF}
	Let $f \colon \OO^{n+m} \m \OO$ denote the projection onto the last coordinate. If $\OO$ comes equipped with a preferred multiplicative Euclidean function $|-|$, we let $F(v)=|f(\vec v)|$ for $v$ a line spanned by $\vec v$. This is well-defined since $| - |$ is multiplicative. If $\OO$ is the Gaussian integers or Eisenstein integers, we will take $|-|$ to be the usual norm on the complex numbers, $|a+bi|=\sqrt{a^2+b^2}$. We will occasionally denote $|f(\vec v)|$ by $F(\vec v)$ instead of $F(v)$.
\end{definition} 

\begin{definition}Let $\sigma$ be a simplex of $BA^m_n(\OO)$. Then 
	\[\Linkh{}^<_{BA_n^m(\OO)}(\sigma)\]
	denotes the full subcomplex of $\Linkh_{BA_n^m(\OO)}(\sigma)$ consisting of lines $v$ such that there is a vertex $w$ in $\sigma$ with $F(v)<F(w)$. Similarly define 	\[\Link^{<}_{B_n^m(\OO)}(\sigma) \qquad \text{and} \qquad \Link^{<}_{BA_n^m(\OO)}(\sigma)\] for simplices $\sigma$ in $B_n^m(\OO)$ or $BA_n^m(\OO)$ respectively.
\end{definition}

We can deduce a connectivity result for the latter complex from that for $B^m_n(\OO)$. 

\begin{lemma}\label{LinkB} Let $\OO$ be a Euclidean domain and let $\sigma$ be a simplex in $B_n^m(\OO)$. Assume that $F(w)>0$ for some vertex $w$ in $\sigma$. Then $\Link^{<}_{B_n^m(\OO)}(\sigma)$ is Cohen--Macaulay of dimension $(n-\dim(\sigma)-2)$.\end{lemma}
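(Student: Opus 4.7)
The plan is to induct on the expected dimension $N-1 := n-\dim(\sigma)-2$, where $k := \dim(\sigma)$ and $N := n-k-1$. Set $M := \max_{w \in \sigma} F(w)$. The crucial structural observation is that for any simplex $\tau$ of $\Link^{<}_{B_n^m(\OO)}(\sigma)$, every vertex $u$ of $\tau$ satisfies $F(u)<M$, so the maximum of $F$ on $\sigma \cup \tau$ is still $M$. Hence
\[\Link_{\Link^{<}_{B_n^m(\OO)}(\sigma)}(\tau) \;=\; \Link^{<}_{B_n^m(\OO)}(\sigma \cup \tau),\]
and by the inductive hypothesis applied to the larger simplex $\sigma \cup \tau$, this link is Cohen--Macaulay of dimension $N-\dim(\tau)-2$. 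This reduces the Cohen--Macaulay claim for $\Link^{<}_{B_n^m(\OO)}(\sigma)$ to showing that it has dimension $N-1$ and is $(N-2)$-connected.

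Next I would identify $\Link_{B_n^m(\OO)}(\sigma) \cong B^{m+k+1}_{N}(\OO)$, which by \autoref{thm:maazen} is Cohen--Macaulay of dimension $N-1$, and in particular $(N-2)$-connected. To push this connectivity into the subcomplex $\Link^{<}_{B_n^m(\OO)}(\sigma)$, I would employ a \emph{bad vertex} replacement argument driven by the Euclidean function, in the spirit of Hatcher--Wahl. Given a map $\phi \colon S^d \to \Link^{<}_{B_n^m(\OO)}(\sigma)$ with $d \leq N-2$, first extend to $\Phi \colon D^{d+1} \to \Link_{B_n^m(\OO)}(\sigma)$, then deform $\Phi$ so that its image lies in the subcomplex $\Link^{<}_{B_n^m(\OO)}(\sigma)$.

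The replacement move is supplied by the Euclidean algorithm: fix $w \in \sigma$ with $F(w)=M$, and for any vertex $v$ of $\Phi$ with $F(v) \geq M$, choose $\alpha \in \OO$ such that the primitive vector $\vec v' := \vec v - \alpha \vec w$ has $F(\vec v') < M$. Elementary row reduction shows that the line $v'$ together with $\sigma$ and $e_1, \ldots, e_m$ still forms a partial frame, so $v' \in \Link^{<}_{B_n^m(\OO)}(\sigma)$, and moreover $\{v, v'\} \cup (\sigma \setminus \{w\})$ is a simplex of $\Link_{B_n^m(\OO)}(\sigma \setminus \{w\})$. One then processes the bad vertices of $\Phi$ in order of decreasing $F$-value, replacing each top bad vertex by its associated $v'$ and homotoping $\Phi$ across this auxiliary simplex.

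The main obstacle will be verifying that this deformation can be carried out simplicially. The standard way to control it is to show that the link of a bad vertex $v$ inside the bad subcomplex of $\Link_{B_n^m(\OO)}(\sigma)$ is sufficiently connected; in the present setting this link is itself an instance of the lemma applied to a strictly larger simplex, and thus is handled by the outer induction. Finally, the dimension claim $\dim(\Link^{<}_{B_n^m(\OO)}(\sigma))=N-1$ follows by taking any maximal simplex of $\Link_{B_n^m(\OO)}(\sigma)$ and applying the same Euclidean reduction vertex-by-vertex to bring all $F$-values strictly below $M$.
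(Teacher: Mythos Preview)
Your reduction of the Cohen--Macaulay claim to connectivity via the identity $\Link_{\Link^{<}_{B_n^m(\OO)}(\sigma)}(\tau) = \Link^{<}_{B_n^m(\OO)}(\sigma \cup \tau)$ is exactly what the paper does, and your Euclidean-division move $\vec v \mapsto \vec v - \alpha \vec w$ is the same core idea. The difference is only in packaging: rather than extending to a disk and iteratively eliminating bad vertices, the paper observes that fixing one $x \in \sigma$ with $F(x)=M$ and setting $\pi(v) = \mr{span}(\vec v - q_v \vec x)$ for \emph{every} vertex $v$ simultaneously (with $q_v=0$ when $F(v)<M$) defines a global \emph{simplicial retraction} $\pi \colon \Link_{B_n^m(\OO)}(\sigma) \to \Link^{<}_{B_n^m(\OO)}(\sigma)$. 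This is simplicial because if $\{e_1,\ldots,e_m\} \cup \sigma \cup \{v_0,\ldots,v_p\}$ is a partial frame then so is $\{e_1,\ldots,e_m\} \cup \sigma \cup \{\pi(v_0),\ldots,\pi(v_p)\}$, as one has only performed column operations using $x \in \sigma$. Connectivity of $\Link^{<}$ then follows in one line from \autoref{thm:maazen}, with no disk extension, no ordering of bad vertices, and no outer induction on $N$.

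Your Hatcher--Wahl style argument would also succeed, but one caveat: the step where you invoke the outer induction for ``the link of a bad vertex $v$'' is not quite right as stated. If $F(v)>M$, then $\Link^{<}_{B_n^m(\OO)}(\sigma \cup \{v\})$ is defined with threshold $F(v)$ rather than $M$, so it is strictly larger than the good link you actually need. Your direct replacement $v \leadsto v'$ sidesteps this issue, since $v$ and $v'$ have identical links inside $\Link_{B_n^m(\OO)}(\sigma)$; that observation is precisely why the global retraction works and is the cleaner route.
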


\begin{proof}For a simplex $\tau$ in $\Link^{<}_{B_n^m(\OO)}(\sigma)$, 
	\[\Link_{\Link^{<}_{B_n^m(\OO)}(\sigma)}(\tau) = \Link^{<}_{B_n^m(\OO)}(\sigma \ast \tau),\]
	where $\ast$ denotes simplicial join; this uses the assumption that for all $v \in \tau$ we have $F(v)<F(w)$ for some $w \in \sigma$. Our goal is therefore to show that $\Link^{<}_{B_n^m(\OO)}(\sigma)$ is $(n-\dim(\sigma)-3)$-connected. 
	
	This result is proved in the case $\OO=\Z$ in Church--Putman \cite[Lemma 4.5]{CP}, and their proof adapts readily to all Euclidean domains. For completeness, we sketch a proof here. Given \autoref{thm:maazen}, it suffices to show that there is a simplicial retraction 
	\[\pi \colon \Link_{B_n^m(\OO)}(\sigma) \longrightarrow  \Link^{<}_{B_n^m(\OO)}(\sigma).\]
	Let $x$ be a vertex of $\sigma$ with $M=F(x)$ maximal among the vertices, and fix a representative vector $\vec x$.  We define the map $\pi$ on vertices of $ \Link_{B_n^m(\OO)}(\sigma) $ as follows. For $ v \in \Link_{B_n^m(\OO)}(\sigma) $, choose a representative $\vec v \in \OO^{n+m}$, and let $q_v \in \OO$ be a quotient of $f(\vec v)$ on division by $f(\vec x)$, in the sense of the Euclidean algorithm. If $F( \vec v) < F( \vec x) $ we take $q_v=0$. Then by construction \[0 \leq F( \vec v - q_v \vec x) < F( \vec x) = M.\] We can thus define $\pi(v)$ to be the line spanned by $( \vec v - q_v \vec x)$. It is straightforward to verify that this map on vertices extends over simplices in $\Link_{B_n^m(\OO)}(\sigma)$, and fixes simplices in $\Link^{<}_{B_n^m(\OO)}(\sigma)$.
\end{proof} 

\subsection{New connectivity results}

By modifying the proof of Church--Putman  \cite[Theorem C']{CP}, we will prove the following.

\begin{theorem}\label{BAnOtherRings}
For $\OK$ the Gaussian integers or Eisenstein integers, the simplicial complexes $BA_n^m(\OK)$ are Cohen--Macaulay of dimension $n$ for all $n$ and $m$ satisfying $n \geq 1$ and $n+m \geq 2$.
\end{theorem}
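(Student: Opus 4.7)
The plan is to follow the strategy of Church--Putman's proof of \cite[Theorem C']{CP}, where the analogous statement is established for $\OK = \Z$, and to identify precisely the places where the ring structure of $\Z$ is invoked and replace them with the analogues over $\OK$ proved in Section 2.

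The dimension claim is immediate: a simplex of $BA_n^m(\OK)$ has at most $n+1$ vertices, since its vertices span a subspace of rank at most $n$ in $\OK^{n+m}/\langle \vec e_1, \ldots, \vec e_m\rangle$, and the extra vertex in the top case is an additive one of the form $u_1 \vec v_1 + u_2 \vec v_2$. The Cohen--Macaulay property then reduces, by a case analysis of links at internally vs.\ externally additive simplices, to showing that $BA_{n'}^{m'}(\OK)$ is $(n'-1)$-connected for all relevant smaller $n'$, which we handle by induction on $n$.

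For the induction step, we would use a filling argument. Given a simplicial map $\phi \colon S \to BA_n^m(\OK)$ from a triangulated $(n-1)$-sphere, assign to each simplex of $S$ the maximum of $F(\phi(v))$ over its vertices, and proceed by induction on the largest such \emph{energy}. For an energy-maximizing simplex $\sigma$, we seek a vertex $w \in \Link^{<}_{BA_n^m(\OK)}(\phi(\sigma))$ whose star contains $\phi(\sigma)$, so that we can retract $\phi$ across the star of $w$ and strictly decrease the energy. The partial-frame analog of this connectivity statement is \autoref{LinkB}; its augmented counterpart is what we must establish, and it requires enough unit choices and additive combinations to form a connected family of reductions inside the appropriate link.

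The main obstacle, and the place where the specific properties of the Gaussian and Eisenstein integers come in, is the construction of these retractions when several vertices of $\phi(\sigma)$ share the maximum $F$-value. For $\OK = \Z$ a single subtraction by $\pm \vec v_i$ suffices by the usual Euclidean algorithm. For $\OK$ as in our theorem, one must instead choose a unit $u$ to reduce the $F$-value of a single line (provided by \autoref{lem0}), and for additive simplices with relation $\vec v_0 = u_1 \vec v_1 + u_2 \vec v_2$ one must simultaneously reduce the $F$-values of $v_1$ and $v_2$ while preserving the additive relation (provided by \autoref{lem2G} and \autoref{lem2E}, which yield $r_1, r_2 \in \OK$ controlling both individual reductions and their sum). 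Finally, \autoref{lem1} ensures that the different unit choices available in each reduction can be woven together into a connected family of simplices, so that the bad simplex really can be filled by a disc of strictly lower energy. Once these substitutions are in place, the combinatorial filling argument closes exactly as in the integer case, and the induction gives Cohen--Macaulayness in all ranks.
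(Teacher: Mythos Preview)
Your proposal has the right high-level architecture (induct on $n$, reduce to links via \autoref{LinkInduction}, then run a ``decrease the maximum $F$-value'' filling argument \`a la Church--Putman), but it skips over the one place where the argument for $\Z$ genuinely fails and must be repaired.  That place is \emph{not} the four-step filling procedure itself; it is the proof that the restricted link $\Linkh{}^<_{BA_n^m(\OK)}(w)$ inherits the $(n-2)$-connectivity of $\Linkh_{BA_n^m(\OK)}(w)$ (the analogue of \cite[Proposition 4.17]{CP}).  Church--Putman construct a retraction $\Linkh \to \Linkh{}^<$ by defining $v \mapsto v^\pi$ on vertices via the Euclidean algorithm, but this assignment does not extend over certain \emph{carrying} additive simplices whose images $\{v_0^\pi,v_1^\pi,v_2^\pi\}$ fail to span a simplex.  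Over $\Z$ they fix this by inserting a single extra vertex into each carrying triangle; over the Gaussian or Eisenstein integers that single-vertex subdivision does not exist, and a more elaborate subdivision is needed.

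Concretely, the paper first proves separately that $\Linkh{}^<_{BA_3(\OK)}(w)$ is $1$-connected (\autoref{BA21}) and that $\Linkh{}^<_{BA_2(\OK)}(w)$ is connected (via \autoref{lem1}).  These low-dimensional results are where \autoref{lem2G}/\autoref{lem2E} and \autoref{lem1} are actually used: not directly in the filling argument, but to show that the boundary of a carrying $2$- or $1$-simplex bounds a disc inside the appropriate $\Linkh{}^<$, so that one can subdivide the carrying simplex by that disc and \emph{then} define a simplicial retraction on the subdivided complex (\autoref{RETLEM}).  Only after this retraction is in hand does the four-step filling argument go through (with \autoref{lem0} entering in Step~3 to separate distinct vertices of equal maximal $F$-value, and the retraction used in Step~4).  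Your sketch conflates these layers and, in particular, does not account for the carrying-simplex subdivision; without it the ``retract across the star of $w$'' step has no reason to land in $\Linkh{}^<$.
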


Recall that  $BA_n(\OK)=BA_n^0(\OK)$ so the above theorem implies that $BA_n(\OK)$ is spherical. 

\subsubsection{Low dimensional cases}

Before we can prove that $BA_n^m(\OK)$ is Cohen--Macaulay for general $n$ and $m$, we will first need to study small values. The argument will be by induction, and the following will be the base case. 

\begin{lemma}\label{linkUnits}
Let $m \geq 1$. Then $BA_1^m(\OO)$ is connected if and only if $\OO$ is additively generated by multiplicative units. 
\end{lemma}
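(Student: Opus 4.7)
I would begin by giving a concrete description of $BA_1^m(\OO)$ as a one-dimensional simplicial complex. The vertices are lines $v \subseteq \OO^{m+1}$ with $v \not\subseteq \mr{span}(\vec e_1,\ldots,\vec e_m)$ such that $\{v,e_1,\ldots,e_m\}$ is a simplex of $BA_{m+1}(\OO)$. Because $\OO^{m+1}$ has rank $m+1$, this collection of $m+1$ lines cannot be augmented (the augmented vertex would lie in $\mr{span}(\vec e_1,\ldots,\vec e_m)$), so it must be a partial frame, which happens precisely when a primitive representative $\vec v$ has unit last coordinate. Rescaling by that unit, each vertex corresponds uniquely to a tuple $(a_1,\ldots,a_m) \in \OO^m$ via $\vec v = (a_1,\ldots,a_m,1)$. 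No $p$-simplices exist for $p \geq 2$: an augmented partial frame in $\OO^{m+1}$ has at most $m+2$ lines, so the link of $\{e_1,\ldots,e_m\}$ has no simplices of dimension greater than $1$.

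Next, I would identify the edges. Two distinct vertices $v_0, v_1$ span an edge iff $\{v_0,v_1,e_1,\ldots,e_m\}$ is a simplex of $BA_{m+1}(\OO)$; since it has $m+2$ elements, it must be augmented. A short case analysis on which line is the ``additive'' one, using that the last coordinate of each $\vec e_i$ vanishes while that of $\vec v_0$ and $\vec v_1$ equals $1$, rules out every option except $\vec v_0 - \vec v_1 = u\vec e_i$ for some unit $u \in \OO^\times$ and some $i \in \{1,\ldots,m\}$. (For instance, if the additive vertex is $v_0$, comparing last coordinates of $\vec v_0 = u_1 \vec v_1 + u_2 \vec e_i$ forces $u_1 = 1$; if it is $e_i$, comparing last coordinates of $\vec e_i = u_1 \vec v_0 + u_2 \vec v_1$ forces $u_2 = -u_1$.) Translating to coordinates, $(a_1^0,\ldots,a_m^0)$ and $(a_1^1,\ldots,a_m^1)$ are joined by an edge iff they agree in all coordinates except one, and there they differ by a unit.

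Finally, I would conclude the equivalence. Let $U \subseteq (\OO,+)$ denote the additive subgroup generated by $\OO^\times$. Along any edge exactly one coordinate changes, and by a unit; hence traversing any edge-path preserves the coset $(a_1+U,\ldots,a_m+U) \in (\OO/U)^m$. Conversely, any element of $U$ is a finite sum of units, so one can traverse a sequence of edges that alters a single chosen coordinate by any prescribed element of $U$. Therefore the connected components of $BA_1^m(\OO)$ are precisely the cosets of $U^m$ in $\OO^m$, and the complex is connected iff $U^m = \OO^m$ iff $U = \OO$, i.e., iff $\OO$ is additively generated by units.

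\textbf{Main obstacle.} There is no deep difficulty; the plan reduces to a book-keeping exercise. The most care-intensive step is the edge case analysis, where one must enumerate the possible roles of each line in the augmented partial frame structure and verify that the constraint imposed by the last coordinate excludes every case save $\vec v_0 - \vec v_1 = u\vec e_i$.
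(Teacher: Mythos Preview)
Your proposal is correct and follows essentially the same approach as the paper's proof: identify vertices of $BA_1^m(\OO)$ with tuples $(a_1,\ldots,a_m)\in\OO^m$ via the normalization $\vec v = (a_1,\ldots,a_m,1)$, show that edges correspond to changing a single coordinate by a unit, and conclude that connected components are cosets of $U^m$ in $\OO^m$. Your case analysis on the additive vertex is a bit more explicit than the paper's (which simply lists the two possible shapes of the augmented frame and then discards the one landing in $\OO^m$), but the substance is identical.
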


\begin{proof}  An $(m+1)$-simplex in $BA_{m+1}(\OO)$ containing $e_1,  e_2, \ldots, e_m$ is an augmented frame for $\OO^{m+1}$ of the form 
  \[\{e_1, e_2, \ldots, e_m, x, y\}\] where $\{ \vec e_1, \ldots, \vec e_m, \vec x\}$ is a basis for $\OO^{m+1}$, and $\vec y = u_1 \vec x + u_2 \vec e_j$ or $\vec y = u_1 \vec e_i + u_2 \vec e_j$  for some $i,j$ and some units $u_1, u_2 \in \OO$. 
By definition $BA_1^m(\OO)$ is the subgraph of the edges $\{x, y\}$ on those lines $x$, $y$ described above that are not contained in $\OO^m$. 

In particular, the vertices of this graph are the spans of vectors of the form $(x_1,\ldots,x_m,1)$ with $x_i \in \OO$; these vertices may be uniquely represented by a vector with $(m+1)$st coordinate equal to $1$, and the other coordinates may be any values in $\OO^m$.  There is an edge from $(x_1,\ldots,x_m,1)$ to $(y_1,\ldots,y_m,1)$ if and only if there is an $i$ such that $x_j=y_j$ for all $j \neq i$ and $x_i-y_i$ is a unit. See \autoref{CayleyZ2}. There is therefore a path from $(x_1,\ldots,x_m,1)$ to $(y_1,\ldots,y_m,1)$ if and only if each value $x_i-y_i$ is a sum of units. The claim follows. \end{proof}

\begin{figure}[!ht]  
\centering
\labellist
\Large \hair 0pt
\pinlabel { \color{black} $\tiny{  \displaystyle \begin{pmatrix} 1 \\ 1 \\1 \end{pmatrix} }$} [l] at 61 68
\pinlabel { \color{black} $\tiny{  \displaystyle \begin{pmatrix} 1 \\ 0 \\1 \end{pmatrix} }$} [l] at 61 46
\pinlabel { \color{black} $\tiny{  \displaystyle \begin{pmatrix} 1 \\ -1 \\1 \end{pmatrix} }$} [l] at 61 23
\pinlabel { \color{black} $\tiny{  \displaystyle \begin{pmatrix} 0 \\ -1 \\1 \end{pmatrix} }$} [l] at 39 23
\pinlabel { \color{black} $\tiny{  \displaystyle \begin{pmatrix} 0 \\ 0 \\1 \end{pmatrix} }$} [l] at 39 46
\pinlabel { \color{black} $\tiny{  \displaystyle \begin{pmatrix} 0 \\ 1 \\1 \end{pmatrix} }$} [l] at 39 68
\pinlabel { \color{black} $\tiny{  \displaystyle \begin{pmatrix} -1 \\ -1 \\1 \end{pmatrix} }$} [l] at 16 23
\pinlabel { \color{black} $\tiny{  \displaystyle \begin{pmatrix} -1 \\ 0 \\1 \end{pmatrix} }$} [l] at 16 46
\pinlabel { \color{black} $\tiny{  \displaystyle \begin{pmatrix} -1 \\ 1 \\1 \end{pmatrix} }$} [l] at 16 68
\endlabellist
\includegraphics[scale=2]{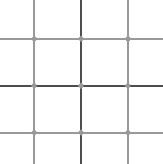} 
\caption{Part of the complex $BA_1^2(\Z)$.}
\label{CayleyZ2}
\end{figure} 

As illustrated in \autoref{CayleyZ2}, if $\OO$ is additively generated by units, then $BA_1^m(\OO)$ is the Cayley graph for the additive group $\OO^m$ associated to the generating set $\{u \vec e_i \; | \; \text{$u$ a unit, $i=1, \ldots, m$}\}$.

\begin{proposition} \label{BA2}
Let $\OK$ denote the Gaussian integers or Eisenstein integers. Then $BA_2(\OK)$ is Cohen--Macaulay of dimension 2.
\end{proposition}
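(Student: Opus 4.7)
The plan is to verify directly the three Cohen--Macaulay conditions at dimension $2$ for $BA_2(\OK)$. Every augmented partial frame of $\OK^2$ has cardinality exactly $3$ (two lines forming a basis of $\OK^2$ together with a third line expressible as a unit combination of them), so $BA_2(\OK)$ is $2$-dimensional. Given any edge $\{v_1,v_2\}$ — which is necessarily a basis — the line $\mr{span}(\vec v_1+\vec v_2)$ completes it to a $2$-simplex, so every edge-link is non-empty. Since $\GL_2(\OK)$ acts transitively on the lines of $\OK^2$, the link of any vertex is isomorphic to the link of $e_1$, namely $BA_1^1(\OK)$, which is connected by \autoref{linkUnits} because both the Gaussian and Eisenstein integers are additively generated by their units.

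The main work is proving simple connectivity of $BA_2(\OK)$. I would begin by giving an explicit coordinate description of vertex links. Fix a vertex $v$, pick a primitive representative $\vec v\in\OK^2$, and (using the Euclidean algorithm) a vector $\vec v'\in\OK^2$ with $\det(\vec v,\vec v')=1$. Then every vertex of the link of $v$ has a unique primitive representative of the form $\vec w_t=t\vec v+\vec v'$ for some $t\in\OK$. A short calculation — equating $\alpha_v\vec v+\alpha_t\vec w_t+\alpha_{t'}\vec w_{t'}=0$ for units $\alpha_v,\alpha_t,\alpha_{t'}$ — shows that $\{w_t,w_{t'}\}$ spans a $2$-simplex together with $v$ if and only if $t-t'$ is a unit of $\OK$. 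Thus the link of $v$ is canonically isomorphic to the Cayley graph of $(\OK,+)$ with unit generators that appears in \autoref{lem1}.

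Next I would apply a standard discrete Morse-theoretic argument to the vertex function $F$ of \autoref{Defn:fF}. The subcomplex of vertices with $F=0$ is the single point $\{e_1\}$, which is trivially simply connected. It therefore suffices to show that for every vertex $v$ with $F(v)>0$, the descending link — the full subcomplex of the link of $v$ on vertices of strictly smaller $F$-value — is non-empty and connected. Writing $\vec v=(a,b)$ with $|b|=F(v)>0$ and $\vec v'=(a',b')$, we get $F(w_t)=|tb+b'|$, so the descending link is exactly the subgraph of the above Cayley graph on the vertex set
\[
\{\, t\in\OK : |t-z|<1 \,\},\qquad z:=-b'/b\in\C.
\]
This is precisely the subgraph $G_z$ of \autoref{lem1}, which is connected; it is non-empty by the Euclidean property of $\OK$. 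The Morse argument then yields that $\{e_1\}\hookrightarrow BA_2(\OK)$ induces an isomorphism on $\pi_0$ and a surjection on $\pi_1$, so $BA_2(\OK)$ is simply connected and hence $2$-spherical.

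The main obstacle I expect is the explicit combinatorial identification of the link of $v$ and its descending subcomplex with this Cayley-graph picture; once this identification is in place, the proof reduces cleanly to \autoref{lem1}. Note that \autoref{lem0}, \autoref{lem2G}, and \autoref{lem2E} play no role in this base case — they will be needed in the inductive step proving Cohen--Macaulayness of $BA_n^m(\OK)$ for larger $n+m$ in \autoref{BAnOtherRings}.
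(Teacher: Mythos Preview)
Your approach coincides with the paper's at its heart: both arguments turn on identifying the strict descending link $\{w \in \Link_{BA_2(\OK)}(v) : F(w) < F(v)\}$ with the graph $G_z$ of \autoref{lem1}, and using its connectedness to push a loop down to $\{e_1\}$. The difference is only packaging --- the paper carries out an explicit homotopy of a simplicial loop $\phi\colon S^1 \to BA_2(\OK)$, while you invoke a Morse-type principle.

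However, your Morse packaging has a real gap, and the claim that \autoref{lem0} plays no role is wrong. The function $F$ is \emph{not} a Morse function in the Bestvina--Brady sense: adjacent vertices can share an $F$-value (e.g.\ $\mr{span}(1,1)$ and $\mr{span}(0,1)$ form an edge with $F=1$ on both ends). Without the no-ties condition the implication ``all strict descending links connected $\Rightarrow$ simply connected'' is simply false: a $3$-cycle with $F$-values $0,1,1$ on its three vertices has every strict descending link equal to a single point, yet is an $S^1$. The paper repairs exactly this defect. When the vertex $x$ realising the maximum $M$ has a neighbour $x_1$ with $F(\phi(x_1))=M$ as well, \autoref{lem0} supplies a unit $u$ with $F(\vec v_1 - u\vec w)<M$, and the paper subdivides the edge $\{x,x_1\}$ at the line $\mr{span}(\vec v_1 - u\vec w)$, which forms an additive $2$-simplex with $w$ and $v_1$. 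Only after this step do both neighbours of $x$ lie in the strict descending link $\cong G_z$, and \autoref{lem1} applies. So \autoref{lem0} is precisely the missing ingredient in your outline.
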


\begin{proof}For $BA_2(\OK)$ to be Cohen--Macaulay of dimension 2, no condition is imposed on the link of $2$-simplices. The link of a $1$-simplex $\{v_1,v_2\}$ always contains the vertex $\mr{span}(\vec{v}_1+\vec{v}_2)$, so is $(-1)$-connected. The link of a vertex $v$ is isomorphic to $BA^1_1(\OK)$, which is $0$-connected by \autoref{linkUnits} since $\OK$ is additively generated by units. Thus it remains to show that $BA_2(\OK)$ is $1$-connected.
	
First observe that $BA_2(\OK)$ is $0$-connected since its $1$-skeleton is $B_2(\OK)$, which is connected by \autoref{thm:maazen}. Now suppose $\phi \colon S^1 \to BA_2(\OK)$ is a simplicial map with respect to some simplicial structure on $S^1$. Our goal is to show that $\phi$ is homotopic to a constant map. 

Let $$M \coloneqq \max_{\text{vertices } x \in S^1}\{F(\phi(x))\}.$$  Here $F$ is as in \autoref{Defn:fF}. If $M=0$, then $\phi$ is the constant map  at $e_{1}$. Hence assume that $M>0$ and that $\phi$ is not constant. We will prove that $\phi$ can be homotoped to a map $\hat \phi$ having one less vertex mapping to a line $w$ with $F(w)=M$. As it is not constant, by collapsing edges we may assume that $\phi$ is simplex-wise injective. Pick a vertex $x \in S^1$ with $F(\phi(x)) = M$. Let $x_1$ and $x_2$ be the vertices adjacent to $x$, and let $\phi(x)=w$ and $\phi(x_i)=v_i$ denote the images in $BA_2(\OK)$.

We will homotope $\phi$ to make the vertices adjacent to $x$ have last coordinate $<M$, if they do not already. So suppose without loss of generality that $F(v_1)=M$. By \autoref{lem0}, we can find a unit $u$ with $F(\vec v_1-u \vec w)<M$. 
Since we assume that $\phi$ is simplex-wise injective, $v_1 \neq w$ and hence $\vec v_1-u \vec w \neq 0$. Let $v_1'$ denote the span of $\vec v_1-u \vec w$ and note that $\{v_1,v_1',w\}$ forms a simplex in $BA_2(\OK)$. Thus, there is a homotopy $H_t$ with:
\begin{itemize}
\item  $H_0=\phi$,
\item $H_t(y)=\phi(y)$ for $y \in S^1$ not in the interior of $\{x_1,x\}$,
\item $H_1$ is a simplicial map from $S^1$ with the edge $\{x_1,x\}$ subdivided once and with middle vertex being mapped to $v'_1$.
\end{itemize}
Since $F(v_1')<M$, we see that $\phi$ is homotopic to a simplicial map where one of the vertices adjacent to $x$ has last coordinate smaller than $M$. If necessary, we can also apply this procedure to $v_2$. 

Thus, we may assume that the images of vertices adjacent to $x$ are $v_1,v_2 \in \Link{}^<_{BA_2(\OK)}(w)$. Pick representatives $\vec v_1$ and $\vec w$. The vertices of \[\Link{}^<_{BA_2(\OK)}(w)\] are precisely the spans of vectors of the form $\vec v_1+a \vec w$ with $a \in \OK$ such that $F(\vec v_1+a \vec w)<F(w) = M$, equivalently, such that 
\[\left| \frac{f(\vec v)}{f(\vec w)}+a\right|<1.\]
There is an edge between the span of $\vec v_1+a \vec w$ and the span of $\vec v_1+b \vec w$ if and only if $a-b$ is a unit.  Thus, $\Link{}^<_{BA_2(\OK)}(w)$ is isomorphic to the graph $G_{z}$ of \autoref{lem1} for $z=-f(\vec v)/f(\vec w)$. Hence, that lemma implies it is connected. Let $A \cong S^0$ denote the set containing $x_1$ and $x_2$. Since the target is connected, the map \[\phi|_A \lra \Link{}^<_{BA_2(\OK)}(w)\] is null-homotopic. Let 
\[g \colon \mr{Cone}(A) \lra \Link{}^<_{BA_2(\OK)}(w)\]
be a null-homotopy from a choice of simplicial complex structure on $\mr{Cone}(A)$; note that such a simplicial structure is just a subdivision of an interval. See \autoref{ConedSphere}.

\begin{figure}[h!]
\begin{subfigure}{.3\textwidth} \centering
\labellist
\Large \hair 0pt
\pinlabel { \color{black} $\small{  \displaystyle S^1 }$} [tr] at 5 15
\pinlabel { \color{gray} $\small{  \mr{Cone}(A) }$} [tl] at 58 47
\pinlabel { \color{black} $\small{  x }$} [l] at 30 33
\pinlabel { \color{black} $\small{  x_1 }$} [l] at 13 38
\pinlabel { \color{black} $\small{  x_2 }$} [l] at 29 15
\endlabellist
\qquad
\includegraphics[scale=1.2]{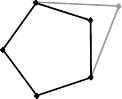}   \\[.2em]
\subcaption{The cone on $A$} 
\label{ConedSphere}
\end{subfigure} 
\begin{subfigure}{.3\textwidth} \centering
\labellist
\Large \hair 0pt
\pinlabel { \color{gray} $\small{  x }$} [l] at 30 33
\endlabellist
\qquad
\includegraphics[scale=1.2]{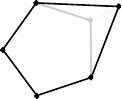}   \\[.2em]
\subcaption{The complex $Z \simeq S^1$} 
\label{ConedSphereBoundary-Empty}
\end{subfigure} 
\begin{subfigure}{.3\textwidth} \centering
\labellist
\Large \hair 0pt
\pinlabel { \color{gray} $\small{  w }$} [l] at 30 33
\pinlabel { \color{black} $\small{  v_1 }$} [l] at 13 38
\pinlabel {\color{black} $\small{  v_2}$} [l] at 33 15
\endlabellist
\qquad
\includegraphics[scale=1.2]{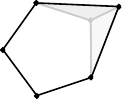}   \\[.2em]
\subcaption{The image of $\hat \phi$} 
\label{ConedSphereBoundary}
\end{subfigure} 
\caption{Sample illustrations of the homotopy between $\phi$ and $\hat \phi$} 
\end{figure}

Let $Z$ be $S^1$ with simplicial structure given by replacing $\{x_1,x\} \cup \{x,x_2\}$ with $\mr{Cone}(A)$, as in \autoref{ConedSphereBoundary-Empty}. Let $\hat \phi \colon Z \m BA_2(\OK)$ be given by the formula: \[\hat \phi(y) =
  \begin{cases}
    \phi(y) & \text{ if } y \in Z \setminus \mr{Cone}(A), \\

 g(y) & \text{ if } y \in  \mr{Cone}(A). 
   \end{cases}\] 
   Observe that $\phi$ maps $\{x_1,x\} \cup \{x,x_2\}$ into $\mr{Star}_{BA_2(\OK)}(w)$, $\hat \phi$ maps $\mr{Cone}(A)$ into $\mr{Star}_{BA_2(\OK)}(w)$, and both maps agree on $A$. See  \autoref{ConedSphereBoundary}. Thus, $\phi$ and $\hat \phi$ are homotopic. The new map $\hat \phi$ maps one fewer vertex to a vertex realizing the  value $M$. Iterating this procedure produces a map with image contained in the star of $e_{2}$.
   \end{proof}

The following general lemma is well-known. See \autoref{Posection} for a review of some notation related to posets. When we refer to the connectivity of a poset we mean the connectivity of the geometric realization of its nerve, and similarly for maps between posets.

\begin{lemma}\label{poset1conn}
Let $p \colon X \m Y$ be a map of simplicial complexes with $Y$ Cohen--Macaulay of dimension $n$. Suppose for each simplex $\sigma$ of $Y$ the inverse image $p^{-1}(\sigma)$ is $(\dim(\sigma)-1)$-connected. Then $X$ is $(n-1)$-connected.
\end{lemma}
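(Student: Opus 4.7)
The plan is to proceed by induction on $n = \dim(Y)$. For the base case $n=0$, $Y$ is a nonempty discrete set and each $p^{-1}(v)$ is nonempty by hypothesis (i.e.\ $(-1)$-connected), so $X$ is nonempty and the conclusion holds.

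For the inductive step, I would first establish the auxiliary fact that $p^{-1}(\partial\sigma)$ is $(\dim(\sigma)-2)$-connected for every simplex $\sigma$ of $Y$. This follows from the inductive hypothesis applied to the restricted map $p|_{p^{-1}(\partial\sigma)}\colon p^{-1}(\partial\sigma) \to \partial\sigma$: the combinatorial boundary $\partial\sigma \cong \partial\Delta^{\dim\sigma}$ is Cohen--Macaulay of strictly smaller dimension $\dim(\sigma)-1 < n$, and the connectivity condition on preimages of simplices of $Y$ restricts to the analogous condition on preimages of simplices of $\partial\sigma$ (since every simplex of $\partial\sigma$ is a simplex of $Y$).

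With this in hand, I would filter $X$ by preimages of skeleta, $X_k := p^{-1}(Y^{(k)})$, and analyze the associated skeletal spectral sequence. By excision, the $E^1$-page decomposes as
$$E^1_{k,q} \;=\; H_{k+q}(X_k, X_{k-1}) \;\cong\; \bigoplus_{\sigma \in Y_k} H_{k+q}\bigl(p^{-1}(\sigma),\,p^{-1}(\partial\sigma)\bigr).$$
The hypothesis that $p^{-1}(\sigma)$ is $(k-1)$-connected, combined with the auxiliary $(k-2)$-connectedness of $p^{-1}(\partial\sigma)$, forces $E^1_{k,q} = 0$ for $q < 0$ via the long exact sequence of each pair. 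The $q = 0$ row is a chain complex whose differentials arise from the cellular boundary map in $Y$ (with local coefficients in $H_k(p^{-1}(\sigma), p^{-1}(\partial\sigma))$), so the Cohen--Macaulay vanishing $\tilde H_i(Y) = 0$ for $i < n$ translates into the vanishing of this row in low total degree. A parallel van Kampen argument on $X_2$ (for $n \geq 2$) shows $\pi_1(X) = 0$, after which Hurewicz upgrades the homology vanishing to $(n-1)$-connectedness of $X$.

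The main obstacle is the careful bookkeeping for the rows $q \geq 1$, which are not forced to vanish by the hypotheses alone; these are handled by the Cohen--Macaulay property of links of simplices of $Y$ (each link being Cohen--Macaulay of appropriate smaller dimension), which is exactly what the spectral sequence needs to kill extension problems in total degrees below $n$. A cleaner alternative is to appeal to a simplicial analogue of Quillen's Theorem A (in the style of Bj\"orner--Wachs--Welker's poset fiber theorems) that directly asserts that a map whose fibers over simplices are suitably highly connected transports the connectivity of the target to the source.
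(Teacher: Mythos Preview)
Your ``cleaner alternative'' is precisely what the paper does: it passes to the posets of simplices $\cat{simp}(X) \to \cat{simp}(Y)$ and invokes the poset fiber theorem of van der Kallen--Looijenga (a descendant of Quillen's Theorem~A), taking the height function $t(\sigma)=\dim(\sigma)$. Then $\cat{simp}(p)/\sigma \cong \cat{simp}(p^{-1}(\sigma))$ is $(\dim\sigma-1)$-connected by hypothesis, and $\cat{simp}(Y)_{>\sigma} \cong \cat{simp}(\mr{Link}_Y(\sigma))$ is $(n-\dim\sigma-2)$-connected by the Cohen--Macaulay assumption. This is a two-line proof once the criterion is stated.

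Your primary approach via the skeletal filtration has a genuine gap. The $q=0$ row of your spectral sequence is \emph{not} the cellular chain complex of $Y$: the groups $H_k(p^{-1}(\sigma),p^{-1}(\partial\sigma))$ need not be $\bZ$, since the hypothesis only gives $(k-1)$-connectivity of $p^{-1}(\sigma)$, not $k$-acyclicity. So the vanishing $\widetilde H_i(Y)=0$ for $i<n$ does not directly compute this row. More seriously, you acknowledge that the rows $q\geq 1$ are not forced to vanish, and then assert that the Cohen--Macaulay property of links ``is exactly what the spectral sequence needs''---but the skeletal filtration sees the skeleta of $Y$, not the links, and you give no mechanism by which link connectivity enters this spectral sequence. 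The Cohen--Macaulay property is about $\mr{Link}_Y(\sigma)$, which corresponds in the poset picture to $\cat{simp}(Y)_{>\sigma}$; this is naturally adapted to the \emph{upward} filtration implicit in the van der Kallen--Looijenga criterion, not the downward skeletal filtration. Your inductive setup establishing connectivity of $p^{-1}(\partial\sigma)$ is correct and useful, but by itself it only controls the $q<0$ region, which is not enough.
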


\begin{proof}Let $\cat{simp}(X)$ be the poset of simplices of $X$, ordered by inclusion, and similarly for $\cat{simp}(Y)$. Then $p$ induces a functor $\cat{simp}(p) \colon \cat{simp}(X) \to \cat{simp}(Y)$ and there are homeomorphisms making the following diagram commute:
	\[\begin{tikzcd}{|\cat{simp}(X)|} \rar{\cong} \dar[swap]{|\cat{simp}(p)|} &[10pt] X \dar{p} \\[10pt]
	{|\cat{simp}(Y)|} \rar{\cong} & Y.\end{tikzcd}\]
These homeomorphisms are just the standard homeomorphisms between a simplicial complex and its barycentric subdivision. Thus it suffices to prove that $|\cat{simp}(p)|$ is $n$-connected. We now apply \cite[Corollary 2.2]{vdKallenLooijenga} (also see \cite[Theorem 4.1]{e2cellsIII}): a map \[f \colon \mathbf{X} \lra \mathbf{Y}\] of posets is $n$-connected if there is a function $t \colon \mathbf{Y} \to \bZ$ such that for every $y \in \mathbf{Y}$ we have \begin{itemize} \item $\mathbf{Y}_{>y} \coloneqq \{y' \in \mathbf{Y} \mid y'>y\}$ is $(n-t(y)-2)$-connected, \item $f/y \coloneqq \{x \in  \mathbf{X} \mid f(x) \leq y\}$ is $(t(y)-1)$-connected.  \end{itemize}

\noindent Here we take $n$ as in the hypothesis, and $t(\sigma) = \dim(\sigma)$. Then $\cat{simp}(p)/\sigma$ is $\cat{simp}(p^{-1}(\sigma))$ which by assumption is $(\dim(\sigma)-1)$-connected. Similarly, $\cat{simp}(Y)_{>\sigma}$ is $\cat{simp}(\mr{Link}_Y(\sigma))$ which is  $(n-\dim(\sigma)-2)$-connected because $Y$ is Cohen--Macaulay of dimension $n$.
\end{proof}

\begin{proposition} \label{BA21}
Let $\OK$ denote the Gaussian integers or Eisenstein integers. Let $w$ be a line in $\OK^3$ with $F(w)>0$. Then $\Link{}^<_{BA_3(\OK)}(w)$ is $1$-connected. 
\end{proposition}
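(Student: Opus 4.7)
The plan is to apply \autoref{poset1conn} to the simplicial projection
\[
p \colon \Linkh^<_{BA_3(\OK)}(w) \longrightarrow BA_2(\OK)
\]
induced by the quotient $\OK^3 \twoheadrightarrow \OK^3 / \OK \vec w \cong \OK^2$. The target is Cohen--Macaulay of dimension $2$ by \autoref{BA2}, so $1$-connectedness of the source reduces to verifying that the preimage $p^{-1}(\sigma)$ is $(\dim(\sigma)-1)$-connected for every simplex $\sigma$ of $BA_2(\OK)$. As a preliminary, I would check that $p$ is simplicial. A vertex $v$ of the source has $\{v,w\}$ a partial frame (since augmented partial frames need at least three lines), so $\bar v$ is a line in $\OK^2$; a short case analysis, organized by the position of $\vec w$ in the defining augmented relation, shows that the image of every simplex is a simplex of $BA_2(\OK)$, possibly with collapses.

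For $\sigma$ a vertex $\bar v$, I would parameterize $p^{-1}(\bar v)$ by $a \in \OK$ via $\mr{span}(\vec v + a \vec w)$, with edges joining parameters $a, a'$ whenever $a - a'$ is a unit. This graph is precisely $G_z$ from \autoref{lem1} at $z = -f(\vec v)/f(\vec w)$, hence connected. For $\sigma$ an edge $\{\bar v_1, \bar v_2\}$, the two fibers are each connected by the previous case, and for any lifts $\mr{span}(\vec v_i + a_i \vec w)$ the triple $\{\vec v_1 + a_1 \vec w,\, \vec v_2 + a_2 \vec w,\, \vec w\}$ is a basis of $\OK^3$; hence every pair of lifts across the two fibers is joined by a basis-triple edge, making the preimage connected.

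The main obstacle is showing the preimage of a $2$-simplex $\{\bar v_0, \bar v_1, \bar v_2\}$ is $1$-connected. After rescaling I would assume $\vec v_0 = \vec v_1 + \vec v_2 + c \vec w$ for some $c \in \OK$. The preimage then contains three connected fibers $V_0, V_1, V_2$, pairwise joined by basis-triple edges, together with augmented $2$-simplices
\[
\bigl\{\mr{span}(\vec v_0 + a_0 \vec w),\; \mr{span}(\vec v_1 + a_1 \vec w),\; \mr{span}(\vec v_2 + a_2 \vec w)\bigr\}
\]
which occur exactly when $a_0 = a_1 + a_2 - c$. Setting $z_i = -f(\vec v_i)/f(\vec w)$, the requirement that each vertex lies in its respective fiber translates into the simultaneous inequalities $|z_1+a_1|<1$, $|z_2+a_2|<1$, and $|(z_1+a_1)+(z_2+a_2)|<1$---which is precisely the estimate supplied by \autoref{lem2G} and \autoref{lem2E}. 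Converting this abundant supply of $2$-simplices, together with the inter-fiber edges, into a systematic disk-filling for an arbitrary loop will be the most delicate step, carried out in the spirit of (but more elaborate than) the maximum-reduction argument used in the proof of \autoref{BA2}.
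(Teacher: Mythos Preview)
Your overall setup matches the paper's: project to $BA_2(\OK)$ via the quotient by $w$, invoke \autoref{BA2} for the Cohen--Macaulay target, and apply \autoref{poset1conn}. Your treatment of vertex fibers (identifying them with the graph $G_z$ of \autoref{lem1}) is identical to the paper's.

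The substantive difference is in the $2$-simplex case, where your proposal is vague and the paper's argument supplies a structural idea you are missing. The paper first observes that the preimage of an \emph{edge} $\{\bar v_0,\bar v_1\}$ is exactly the $2$-skeleton of the join $p^{-1}(\bar v_0)\ast p^{-1}(\bar v_1)$; since each vertex fiber is connected, every edge fiber is therefore $1$-connected, not merely connected. This stronger fact is what drives the $2$-simplex case: letting $X$ be the union of the three edge-fibers inside $p^{-1}(\{\bar v_0,\bar v_1,\bar v_2\})$, the groupoid Seifert--van Kampen theorem shows $\pi_1(X,x_0)$ is generated by a single loop built from one path in each edge-fiber between chosen basepoints $x_i\in p^{-1}(\bar v_i)$. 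The paper invokes \autoref{lem2G}/\autoref{lem2E} exactly once, to choose the $x_i$ so that $\{x_0,x_1,x_2\}$ is itself an augmented $2$-simplex; taking the generating loop to be the boundary of this simplex kills it in the full preimage. Since $X$ contains the $1$-skeleton of $p^{-1}(\sigma)$, the surjection $\pi_1(X)\twoheadrightarrow\pi_1(p^{-1}(\sigma))$ finishes the proof.

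Your proposed ``maximum-reduction in the spirit of \autoref{BA2}'' is not what the paper does, and it is unclear what complexity function you would reduce: all vertices of the fiber already satisfy $F<F(w)$, so there is no natural residual filtration to push down. The join observation for edge fibers together with the van Kampen argument is the missing idea.
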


\begin{proof}
Let $w_1,w_2$ be lines with $\{w_1,w_2,w\}$ a simplex in $B_3(\OK)$. Let $L \colon \OO^3 \m w_1 \oplus w_2$ be given by \[L(a \vec w_1+b \vec w_2 + c \vec w)=a \vec w_1+b \vec w_2.\] If $a \vec w_1+b \vec w_2 + c \vec w$ spans a line in the link of $w$, then $a \vec w_1+b \vec w_2$ spans a line which only depends on the line spanned by $a \vec w_1+b \vec w_2 + c \vec w$.
  We will show that $L$ induces a simplicial map \[p \colon \Link{}^<_{BA_3(\OK)}(w) \lra BA(w_1 \oplus w_2)\] given by sending the span of $a \vec w_1+b \vec w_2 + c \vec w$ to the span of $a \vec w_1+b \vec w_2$. 
  
  \vspace{.5em}
\noindent {\bf Claim: $p$ is a simplicial map.} 
\vspace{.5em}

  Since the span of $a \vec w_1+b \vec w_2 + c \vec w$ is a vertex of $\Link_{BA_3(\OK)}(w)$, the span of $a \vec w_1+b \vec w_2 + c \vec w$ is not $w$ and so $a \vec w_1+b \vec w_2$ is nonzero. In fact, $a \vec w_1+b \vec w_2$ spans a line. Thus the formula for $p$ produces a map on sets of vertices. 

Now we check that it extends to a simplicial map, starting with $2$-simplices. Let $$\{v_0,v_1,v_2\} \in \Link{}^<_{BA_3(\OK)}(w)$$ be an internally additive 2-simplex in the sense of \autoref{DefnAdditive}. Reorder and pick representatives so that $\vec v_0=\vec v_1+\vec v_2$. Since $\{v_1,v_2\}$ is an edge of $\Link_{BA_3(\OK)}(w)$ and $\{v_0,v_1,v_2\}$ is internally additive, $w,v_1, v_2$ are a partial frame and thus $p(v_1),p(v_2)$ is also a partial frame. Since $L$ is linear, $L(\vec v_0)=L(\vec v_1)+L(\vec v_2)$. Thus, $\{p(v_0),p(v_1),p(v_2)\}$ forms a simplex in $BA(w_1 \oplus w_2)$. 

Next, let $$\{v_0',v_1',v_2'\} \in \Link{}^<_{BA_3(\OK)}(w)$$ be an externally additive $2$-simplex. Pick representatives and reindex so that $\vec v_0'=\vec v_1'+\vec w$. Then $p(\vec v_0')=p(\vec v_1')$, and $p$ maps $\{v_0',v_1',v_2'\}$ to $\{p(v_1'),p(v_2')\}$ which forms a 1-simplex in $BA(w_1 \oplus w_2)$. Since every simplex of $\Link{}^<_{BA_3(\OK)}(w)$ is contained in one of these two types of simplices, we have checked that $p$ is a simplicial map.

\vspace{.5em}
\autoref{BA2} implies that $BA(w_1 \oplus w_2)$ is Cohen--Macaulay of dimension 2. Thus, to apply \autoref{poset1conn} to $p$ with $n=2$, it suffices to show the fiber over a simplex $\sigma \subseteq BA(w_1 \oplus w_2)$ is $(\dim(\sigma)-1)$-connected. 

\vspace{.5em}
\noindent {\bf Claim: If $\dim(\sigma)=0$, then $p^{-1}(\sigma)$ is $(-1)$-connected (in fact, connected).} 
\vspace{.5em}

\noindent Let $\sigma=\{v_0\}$. Fix a representative $\vec v_0$ spanning $v_0$ and $\vec w$ spanning $w$. Note that $p^{-1}(\{v_0\})$ has vertices of the form $\vec v_0+a \vec w$ such that $a \in \OK$ and $F(\vec v_0+a \vec w)<F(w)$. There is an edge between $\vec v_0+a \vec w$ and $\vec v_0+b \vec w$ if and only if $a-b$ is a unit.  The constraint $F(\vec v_0+a \vec w)<F(w)$ is a constraint on $a$ equivalent to the condition that \[\left|a+\frac{f(\vec v_0)}{f(\vec w)}\right|<1.\]
Thus, $p^{-1}(\{v_0\})$ is isomorphic to a subgraph of the Cayley graph of $\OK$ with units as generators. Specifically, $p^{-1}(\{v_0\})$ is the subgraph on those vertices contained in the open ball of radius $1$ (in the complex metric) around the complex number $-f(\vec v_0)/f(\vec w)$. This subgraph is $0$-connected by \autoref{lem1} and hence is also $(-1)$-connected.

\vspace{.5em}
\noindent {\bf Claim: If $\dim(\sigma)=1$, then $p^{-1}(\sigma)$ is connected (in fact, $1$-connected).} 
\vspace{.5em}

\noindent Let $\sigma=\{v_0,v_1\}$. Then the vertices of $p^{-1}(\{v_0,v_1\})$ are lines spanned by vectors of the form
$$ \vec v_0+a \vec w, \quad \vec v_1+b \vec w \qquad $$ 
with $a, b \in \OK$ subject to the condition that $F(\vec v_0+a \vec w),F(\vec v_1+b \vec w) <F(w)$. The edges correspond to pairs of these vectors of the form
$$ \{\vec v_0+a \vec w, \; \vec v_1+b \vec w\}, \quad  \{\vec v_0+a \vec w,\; \vec v_0+(a+u) \vec w\},  \quad \{\vec v_1+b \vec w,\; \vec v_1+(b+u) \vec w\}$$ 
with $u$ a unit in $\OK$, and the $2$-simplices correspond to triples of these vectors
$$ \{\vec v_0+a \vec w, \; \vec v_1+b \vec w,\; \vec v_0+(a+u) \vec w\}, \quad   \{\vec v_0+a \vec w, \; \vec v_1+b \vec w, \;\vec v_1+(b+u) \vec w\},$$ 
with $u$ a unit. 
Notably, $p^{-1}(\{v_0,v_1\})$ is the $2$-skeleton of the join of $p^{-1}(\{v_0\})$ and $p^{-1}(\{v_1\})$. Since $p^{-1}(\{v_0\})$ and $p^{-1}(\{v_1\})$ are connected, $p^{-1}(\{v_0,v_1\})$ is $1$-connected and hence $0$-connected.

\vspace{.5em}
\noindent {\bf Claim: If $\dim(\sigma)=2$, then $p^{-1}(\sigma)$ is $1$-connected.} 
\vspace{.5em}
 
\noindent Let $\sigma=\{v_0,v_1,v_2\}$, so $\sigma$ is necessarily an augmented frame. Let \[X= p^{-1}(\{v_0,v_1\}) \cup p^{-1}(\{v_0,v_2\}) \cup p^{-1}(\{v_1,v_2\}) \quad \subseteq \quad p^{-1}(\{v_0,v_1,v_2\}).\] Since $X$ contains the $1$-skeleton of $p^{-1}(\{v_0,v_1,v_2\})$, \[\pi_i(X,x_0) \lra \pi_i(p^{-1}(\{v_0,v_1,v_2\}),x_0)\] is surjective for $i=0,1$ and all basepoints $x_0$. Observe that \[p^{-1}(\{v_i,v_j\}) \cap p^{-1}(\{v_i,v_k\}) = p^{-1}(\{v_i\})\] if $j \neq k$, and the inclusions of the intersection into each term is the inclusion of a subcomplex. This implies that $X$ is connected and hence so is $p^{-1}(\{v_0,v_1,v_2\})$.

Our next goal is to pick basepoints $x_i \in p^{-1}(\{v_i\})$ such that $\{x_0,x_1,x_2\}$ forms a simplex in $p^{-1}(\{v_0,v_1,v_2\})$. Pick representatives for $v_i$ and $w$ such that $\vec v_0=\vec v_1+\vec v_2$. As noted before, a representative for a line in $ p^{-1}(\{v_i\})$ is a vector of the form $\vec v_i+r_i  \vec w$ with $r_i \in \OK$ such that  \[\left|r_i+\frac{f(\vec v_i)}{f(\vec w)}\right|<1.\] By \autoref{lem2G} and \autoref{lem2E} applied to $z_1=-\frac{f(\vec v_1)}{f(\vec w)}$ and $z_2=-\frac{f(\vec v_2)}{f(\vec w)}$, we see that we can find $r_0,r_1,r_2$ with $r_0=r_1+r_2$ and with \[x_i:=\mr{span}(\vec v_i+r_i \vec w) \in  p^{-1}(\{v_i\}) \quad \text{for $i=0,1$ and $2$.}\] Since \[(\vec v_0+r_0 \vec w)=(\vec v_1+r_1 \vec w)+(\vec v_2+r_2 \vec w), \] the lines $\{x_0,x_1,x_2\}$ form a simplex.



Since each $p^{-1}(\{v_i,v_j\})$ is 1-connected and each $p^{-1}(\{v_i\})$ is 0-connected, a groupoid version of the Seifert--van Kampen theorem (see e.g. May \cite[Chapter 2, Section 7]{concise}) then implies that $\pi_1(X,x_0)$ is generated by any loop that is a concatenation of \begin{itemize}
 \item a path $\gamma_{01}$ in $p^{-1}(\{v_0,v_1\})$ 
 from $x_0$ to $x_1$,
 \item a path $\gamma_{12}$ in $p^{-1}(\{v_1,v_2\})$ 
 from $x_1$ to $x_2$,
 \item a path $\gamma_{20}$ in $p^{-1}(\{v_0,v_2\})$ 
 from $x_2$ to $x_0$.
\end{itemize}

\begin{figure}[!ht]  
\centering
\labellist
\Large \hair 0pt
\pinlabel {\color{blue} \small{ $ x_1$}} [l] at 100 280
\pinlabel {\color{blue} \small{ $ x_0$}} [l] at 350 440
\pinlabel {\color{blue} \small{ $ x_2$}} [l] at 350 140
\pinlabel {\color{blue} \small{ $ \gamma_{20}$}} [l] at 360 290
\pinlabel {\color{blue} \small{ $ \gamma_{01}$}} [l] at 200 375
\pinlabel {\color{blue} \small{ $ \gamma_{12}$}} [l] at 200 190
\pinlabel {\color{gray} \small{ $p^{-1}(\{v_0,v_2\})$}} [l] at 460 290
\pinlabel {\color{gray} \small{ $p^{-1}(\{v_1,v_2\})$}} [r] at 230 100
\pinlabel {\color{gray} \small{ $p^{-1}(\{v_0,v_1\})$}} [r] at 230 450
\pinlabel {\color{darkgray} \small{ $p^{-1}(\{v_2\})$}} [r] at 700 100
\pinlabel {\color{darkgray} \small{ $p^{-1}(\{v_0\})$}} [r] at 700 480
\pinlabel {\color{darkgray} \small{ $p^{-1}(\{v_1\})$}} [r] at 10 290
\endlabellist
\includegraphics[scale=.2]{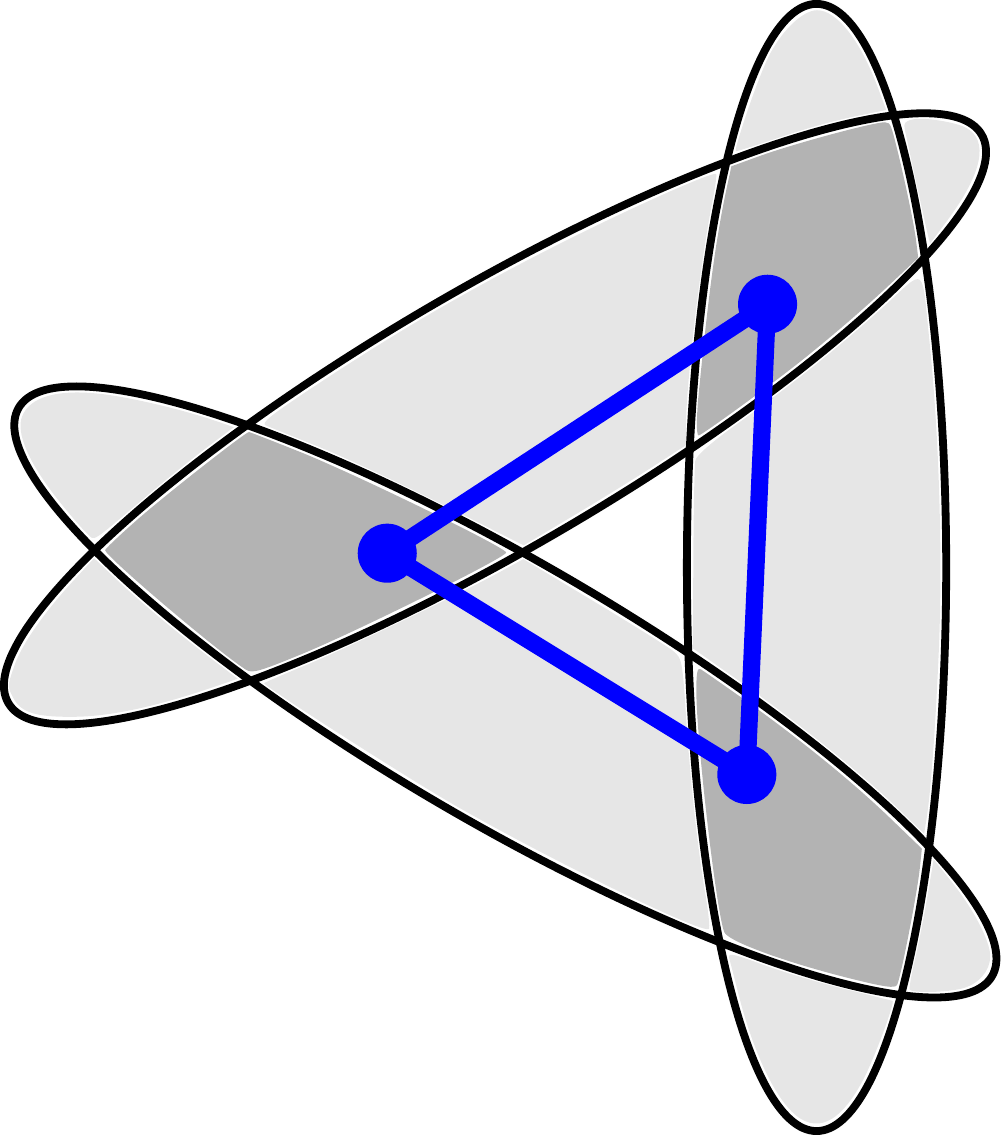} 
\caption{A schematic of a generator of $\pi_1(X,x_0)$.}
\label{VanKampen}
\end{figure} 
\noindent Pick each of these paths from $x_i$ to $x_j$ to be the edge from $x_i$ to $x_j$. See \autoref{VanKampen}. Our specific choice of $x_0,x_1,x_2$ from the previous paragraph then implies that $$\pi_1(X,x_0) \lra \pi_1(p^{-1}(\{v_0,v_1,v_2\},x_0)$$ is the zero map since $\{x_0,x_1,x_2\}$ forms a simplex. Since $\pi_1(X,x_0) \m \pi_1(p^{-1}(\{v_0,v_1,v_2\},x_0)$ is also surjective, this implies that $\pi_1(p^{-1}(\{v_0,v_1,v_2\}),x_1)$ is trivial. Since we already showed that $p^{-1}(\{v_0,v_1,v_2\})$ is connected, this completes the argument.
\end{proof}

\subsubsection{General case} Having completed these low-dimensional cases, we proceed to prove that the complex of augmented partial bases is spherical for general $n$ and $m$. 

\begin{lemma} \label{RETLEM} Let $\OK$ denote the Gaussian integers or Eisenstein integers. Let $w$ be a line with $F(w) \neq 0$. The inclusion 
	\[\iota \colon \Linkh{}^<_{BA_n^m(\OK)}(w) \lra \Linkh_{BA_n^m(\OK)}(w)\]
admits a (not necessarily simplicial) retraction.\end{lemma}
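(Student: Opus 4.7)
The plan is to imitate and extend the simplicial retraction from the proof of \autoref{LinkB}, bringing in the geometric lemmas \autoref{lem1}, \autoref{lem2G}, and \autoref{lem2E} to handle the new difficulty caused by augmented simplices.

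I would fix a representative $\vec w$ of $w$ and, for each vertex $v$ of $\Linkh_{BA_n^m(\OK)}(w)$ with $F(v) \geq F(w)$, fix a representative $\vec v$ and pick a Euclidean quotient $q_v \in \OK$ so that $F(\vec v - q_v \vec w) < F(w)$. Set $\pi(v) := \mr{span}(\vec v - q_v \vec w)$, and $\pi(v) := v$ if $F(v) < F(w)$. On any partial-frame simplex $\sigma = \{v_0, \dots, v_p\}$ of $\Linkh_{BA_n^m(\OK)}(w)$, the map $\pi$ extends simplicially to $\Linkh{}^<_{BA_n^m(\OK)}(w)$: replacing $\vec v_i$ by $\vec v_i - q_{v_i} \vec w$ amounts to elementary column operations, which preserve the partial-frame property relative to $\{\vec w, \vec e_1, \dots, \vec e_m\}$. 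I therefore take $r$ to equal $\pi$ linearly on every partial-frame simplex.

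The essential new difficulty is internally augmented simplices. For such a simplex $\sigma = \{v_0, v_1, v_2, v_3, \dots, v_p\}$ with representatives satisfying $\vec v_0 = u_1 \vec v_1 + u_2 \vec v_2$, the identity
\[
(\vec v_0 - q_0 \vec w) - u_1(\vec v_1 - q_1 \vec w) - u_2(\vec v_2 - q_2 \vec w) = (u_1 q_1 + u_2 q_2 - q_0)\vec w
\]
shows that the image $\{\pi(v_0), \pi(v_1), \pi(v_2), \dots\}$ is augmented exactly when $q_0 = u_1 q_1 + u_2 q_2$, which generally fails. Here \autoref{lem2G} and \autoref{lem2E} (applied to $z_i := u_i f(\vec v_i)/f(\vec w)$) produce \emph{alternative} reductions $\pi'(v_0), \pi'(v_1), \pi'(v_2) \in \Linkh{}^<_{BA_n^m(\OK)}(w)$ whose representatives \emph{do} satisfy $\vec \pi'(v_0) = u_1 \vec \pi'(v_1) + u_2 \vec \pi'(v_2)$, and \autoref{lem1} shows that the set of valid Euclidean reductions of each $v_i$ forms a connected subgraph of $\Linkh{}^<_{BA_n^m(\OK)}(w)$. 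A determinant computation in the basis $(\vec v_1, \vec v_2, \vec w)$ shows that consecutive vertices along such a connecting path, together with $\pi(v_0)$, span partial-frame triangles in $\Linkh{}^<$. Gluing these triangles (and analogous ones reducing $\pi(v_2)$ to $\pi'(v_2)$ and $\pi(v_0)$ to $\pi'(v_0)$) together with the augmented triangle $\{\pi'(v_0), \pi'(v_1), \pi'(v_2)\}$ produces a simplicial disk in $\Linkh{}^<$ whose boundary is $r(\partial \sigma)$, over which I map the interior of $\sigma$ continuously. Higher-dimensional internally augmented simplices are handled inductively, by assembling such disks with the extensions already defined on their partial-frame and lower-dimensional augmented faces.

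The principal obstacle is that the augmented-consistent reductions $q'_i$ depend on the specific simplex $\sigma$, not on the vertex $v_i$ alone, so the resulting retraction cannot be globally simplicial---this is precisely why the lemma's statement permits a non-simplicial retraction.
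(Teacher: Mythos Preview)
Your overall strategy matches the paper's: define the vertex map $\pi$ by Euclidean reduction, observe it extends over non-additive simplices, and then repair the additive simplices by hand using the geometric lemmas. However, there is one genuine gap and one place where the paper's packaging is substantially cleaner.

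\textbf{The gap: externally additive simplices.} You assert that ``the essential new difficulty is internally augmented simplices,'' but this is false. A $1$-simplex $\{v_0,v_1\}$ in $\Linkh_{BA_n^m(\OK)}(w)$ with $\vec v_0 = u_1\vec v_1 + u_2\vec w$ is externally additive, and after applying $\pi$ one has $\pi(\vec v_0)-u_1\pi(\vec v_1)=(u_1q_1+u_2-q_0)\vec w$, whose coefficient need not be a unit. Hence $\{\pi(v_0),\pi(v_1)\}$ need not be an edge of $\Linkh^<$, and you have defined $r$ on its two endpoints but not on the edge itself. The paper calls such edges \emph{carrying} as well, observes that $\pi(v_0),\pi(v_1)$ lie in the rank-$2$ complex $\Linkh^<_{BA(v_0\oplus w)}(w)$, and invokes \autoref{lem1} to connect them by a path (a null-homotopy of the $0$-sphere). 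Your proposal as written does not address this case at all.

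\textbf{Disk-filling versus citing \autoref{BA21}.} Your explicit construction for internally additive triangles---use \autoref{lem2G}/\autoref{lem2E} to produce compatible reductions $\pi'(v_i)$, then use \autoref{lem1} to connect $\pi(v_i)$ to $\pi'(v_i)$, then fill with fans of triangles---is essentially a direct attempt to prove that the boundary loop $\pi(v_0),\pi(v_1),\pi(v_2)$ is null-homotopic in $\Linkh^<_{BA(v_0\oplus v_1\oplus w)}(w)$. This is exactly the content of \autoref{BA21}, and the paper simply quotes that proposition as a black box. Your fan-of-triangles picture is plausible but you have not justified that the pieces glue to a disk with the correct boundary (e.g.\ you must move all three $\pi(v_i)$ to $\pi'(v_i)$ in a coordinated way, not independently); \autoref{BA21} avoids this bookkeeping.

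\textbf{Higher-dimensional simplices.} Your ``handled inductively'' is too vague. The paper's mechanism is to subdivide each carrying simplex $\sigma$ by a triangulation $T(\sigma)$ that leaves $\partial\sigma$ untouched, and to replace $\mr{Star}(\sigma)$ by $\Link(\sigma)\ast T(\sigma)$. Because every simplex of $\Linkh$ contains at most one carrying subsimplex, these subdivisions are independent and produce a homeomorphic complex $X$; the map $\pi$ is then checked to be genuinely simplicial on $X$, and composing with the homeomorphism $\Linkh\cong X$ gives the retraction. This join structure is what makes the higher-dimensional extension automatic.
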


\autoref{RETLEM} is an analogue of Church--Putman \cite[Proposition 4.17]{CP}. We first define the retraction $\pi$ on the vertices of $\Linkh_{BA_n^m(\OK)}(w)$. Unfortunately, this map on vertices does not extend to a simplicial map. There are certain $1$- and $2$-simplices $\sigma$, the \emph{carrying} simplices, for which the images of the vertices do not span a simplex in $\Linkh{}^<_{BA_n^m(\OK)}(w)$, although every face of $\sigma$ does map to a simplex.   As in Church--Putman, we remedy this problem by changing the simplicial structure on $\Linkh_{BA_n^m(\OK)}(w)$. Specifically, we subdivide each carrying simplex $\sigma$ -- leaving its boundary unaltered -- and correspondingly subdivide each simplex that has $\sigma$ as a face. We may then define the map $\pi$ on the new simplicial structure. Although $\pi$ does not respect the simplicial structure of $\Linkh_{BA_n^m(\OK)}(w)$, it does define a topological retraction. We note that to prove \cite[Proposition 4.17]{CP}, Church--Putman subdivided carrying triangles by inserting a single vertex. In the case of the Gaussian integers or Eisenstein integers, this is not possible. Instead, we will use a more elaborate connectivity argument.

%
%

\begin{proof}[Proof of \autoref{RETLEM}]
Recall the functions $f$ and $F$ of \autoref{Defn:fF}. For each vertex $v \in \Linkh_{BA_n^m(\OK)}(w)$, pick a vertex $v^{\pi} \in \Linkh{}^<_{BA_n^m(\OK)}(w)$ such that: \begin{enumerate}[(i)]
\item \label{i} There are representatives $\vec v$ and $\vec w$ and $a \in \OK$ such that $v^{\pi}$ is the span of $\vec v+a \vec w$,
\item $F(v^{\pi}) <F(w)$,
\item $v^{\pi}=v$ if $F(v) < F(w)$. \label{iii}
 
\end{enumerate} Such an assignment exists because of the Euclidean algorithm. Specifically, we choose $a$ so that $f(\vec v+a \vec w)$ is a least residue of $f(\vec v)$ modulo $f(\vec w)$. 

If the vectors $\vec w, \vec v_0, \vec v_1, \vec v_2, \ldots$ form a partial basis, then so too will the vectors $$\vec w, \;\; \vec v_0^{\; \pi}=\vec v_0+a_0\vec w, \;\; \vec v_1^{\; \pi} = \vec v_1 + a_1 \vec w, \;\; \vec v_2^{\; \pi} = \vec v_2 + a_2 \vec w, \ldots$$ Unfortunately, if a triple of vectors $\vec v_0, \vec v_1, \vec v_2$ satisfies $ \vec v_0 =  \vec v_1 +   \vec v_2$, there is no reason that the same linear relation will hold amongst representatives of their images $ \vec v_0^{\; \pi}, \vec v_1^{\; \pi}, \vec v_2^{\; \pi}$. (\autoref{lem2G} and \autoref{lem2E} imply that, for a fixed such triple  $\vec v_0, \vec v_1, \vec v_2$, we could choose least residues to arrange that $  \vec v_0^{\; \pi} =  \vec v_1^{\; \pi} +  \vec v_2^{\; \pi}$. However, there is no way to choose an image $v^{\pi}$ for each line $v$ to preserve all such linear relations simultaneously.) Similarly, if $ \vec v_0 =  \vec v_1 + \vec w$, the same relation need not hold amongst their images. Consequently,  the assignment on vertices $v \mapsto v^{\pi}$ does not extend over simplices in $\Linkh_{BA_n^m(\OK)}(w)$. 
 
Following Church--Putman  \cite[Proof of Proposition 4.17]{CP}, we call an internally additive $2$-simplex $\sigma=\{v_0,v_1,v_2\}$ \emph{carrying} if $\{v_0^\pi,v_1^\pi,v_2^\pi\}$ does not form a $2$-simplex. Let $\Link{}^<_{BA(v_0 \oplus v_1 \oplus w)  }(w)$  denote $\Link{}_{BA(v_0 \oplus v_1 \oplus w)  }(w)  \cap \Linkh{}^<_{BA_n^m(\OK)}(w) $. Observe that the edges $\{v_0^\pi,v_1^\pi\}$, $\{v_1^\pi,v_2^\pi\}$, $\{v_2^\pi,v_0^\pi\}$ form simplices. Thus, the union of these three edges forms a loop in $ \Link{}^<_{BA(v_0 \oplus v_1 \oplus w)  }(w)$, which we denote by $\gamma_\sigma$. We will use \autoref{BA21} to deduce that this loop is null-homotopic in $\Link{}^<_{BA(v_0 \oplus v_1 \oplus w)  }(w)$ once we describe an isomorphism \[\Link{}^<_{BA(v_0 \oplus v_1 \oplus w)  }(w) \cong \Link{}^<_{BA_3}(\tilde w)\] for some line $\tilde w \subset \OK^3$. Let $g:v_0 \oplus v_1 \oplus w \m \OK$ be the restriction of $f$. The image of $g$ is a nonzero principal ideal of $\OK$, say $(a)$. Let $g':v_0 \oplus v_1 \oplus w \m \OK$ be given by $g'(\vec v)=g(\vec v)/a$. Note that $|g(\vec v)|<|g(\vec w)|$ if and only if $|g'(\vec v)|<|g'(\vec w)|$. Since $g'$ is surjective, we may pick an isomorphism $\phi:v_0 \oplus v_1 \oplus w \m \OK^3$ which identifies $g'$ with projection onto the last coordinate. The isomorphism $\phi$ identifies $\Link{}^<_{BA(v_0 \oplus v_1 \oplus w)  }(w)$ with $\Link{}^<_{BA_3}(\tilde w)$ for $\tilde w=\phi(w)$. Thus, $\gamma_\sigma$  is null-homotopic in $\Link{}^<_{BA(v_0 \oplus v_1 \oplus w)  }(w)$.

 For each carrying internally additive $2$-simplex $\sigma=\{v_0,v_1,v_2\}$, pick  a simplicial map
\[H_{\sigma} \colon T(\sigma) \lra \Link{}^<_{BA(v_0 \oplus v_1 \oplus w)  }(w)\]
from a triangulation $T(\sigma)$ of the standard $2$-simplex, with $\gamma_{\sigma}$ equal to the restriction of $H_{\sigma}$ to the boundary of $T(\sigma)$. In particular, the triangulation $T(\sigma)$ does not subdivide the boundary triangle.

Similarly, call an externally additive $1$-simplex $\sigma=\{v_0,v_1\}$ \emph{carrying} if $\{v_0^\pi,v_1^\pi\}$ does not form a $1$-simplex. The vertices $v_0^\pi,v_1^\pi$ form a 0-sphere in $ \Link{}^<_{BA(v_0 \oplus w)  }(w)$, which we denote by $\gamma_{\sigma}$. In the proof of \autoref{BA2}, we identified $ \Link{}^<_{BA(v_0 \oplus w)  }(w)$ with a graph of the form $G_z$. Thus, by \autoref{lem1}, it is connected and so $\gamma_{\sigma}$ is null-homotopic. For each carrying externally additive $1$-simplex $\sigma=\{v_0,v_1\}$, pick  a simplicial map
\[H_{\sigma} \colon T(\sigma) \lra \Link{}^<_{BA(v_0 \oplus w)  }(w)\]
with $T(\sigma)$ a triangulation of the standard $1$-simplex, and $\gamma_{\sigma}$ equal to the restriction of $H_{\sigma}$ to the boundary of $T(\sigma)$.

Call a simplex \emph{carrying} if it is a carrying internally additive $2$-simplex or a carrying externally additive $1$-simplex. Let $X$ be obtained from $\Linkh_{BA_n^m(\OK)}(w)$ by replacing \[\mr{Star}_{\Linkh_{BA_n^m(\OK)}(w)}(\sigma ) \qquad \text{with} \qquad \Link_{\Linkh_{BA_n^m(\OK)}(w)  }(\sigma ) \ast T(\sigma)\] for each carrying simplex $\sigma$. It makes sense to replace all of these simplices simultaneously because every simplex of $\Linkh_{BA_n^m(\OK)}(w)$ contains at most one carrying subsimplex and because the subdivisions $T(\sigma)$ do not subdivide the boundary of $\sigma$. See \autoref{SubdividedSimplices}.
\begin{figure}[!ht]  
\centering
\labellist
\Large \hair 0pt
\endlabellist
\includegraphics[scale=5.5]{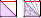} 
\caption{Examples of a carrying $1$-simplex (red) and a carrying $2$-simplex (purple) before and after subdivision. Both subdivisions can be executed simultaneously.}
\label{SubdividedSimplices}
\end{figure} 
The space $X$ is homeomorphic to $\Linkh_{BA_n^m(\OK)}(w)$ but has extra vertices which we will use to construct our retraction.

Let $\mr{vert}$ denote functor that sends a simplicial complex to its set of vertices. Then \[\mr{vert}(X)=\mr{vert}\left(\Linkh_{BA_n^m(\OK)}(w)\right) \cup \bigcup_{\sigma \, \text{carrying}} \mr{vert}(T(\sigma)). \] Note that this union is not a disjoint union as the vertices of $\gamma_\sigma$ are vertices of $\Linkh_{BA_n^m(\OK)}(w)$ and of $T(\sigma)$. Define 
\[\pi \colon \mr{vert}(X) \lra \mr{vert}\left(\Linkh{}^<_{BA_n^m(\OK)}(w) \right)\] via the formula: \[ \pi(y) =
  \begin{cases}
    y^\pi& \text{ if } y \text{ is a vertex of } \Linkh{}_{BA_n^m(\OK)}(w),
    
    \\
    H_{\sigma}(y) & \text{ if } y \text{ is a vertex of } T(\sigma).
    \end{cases} \] 
If $y$ is a vertex of both  $T(\sigma)$ and  $\Linkh{}_{BA_n^m(\OK)}(w)$, then $  H_{\sigma}(y)=y^\pi$ by construction so $\pi$ is a well-defined function on vertices. We now check that $\pi$ induces a simplicial map. Let $\tau=\{x_0,\ldots,x_p \} \subseteq X$ be a simplex. We will show that $\{\pi(x_0),\ldots,\pi(x_p)\}$ forms a simplex. We will consider the following cases:

\vspace{.5em}
\noindent {\bf Case: $\tau$ contains no interior vertices of any $T(\sigma)$.} 
\vspace{.5em} 

Since $\tau$ does not contain any vertices in the interior of any $T(\sigma)$, we can view $\tau = \{x_0,\ldots,x_p\}$ as a simplex in $\Linkh{}_{BA_n^m(\OK)}(w)$. If $\tau$ is not additive, then by the definition of $\pi$ its image $\pi(\tau)=\{x_0^\pi,\ldots,x_p^\pi\}$ is a non-additive simplex in $\Linkh{}^<_{BA_n^m(\OK)  }(w)$. 

Now assume $\tau$ is internally additive; the externally additive case is similar. Reorder and pick representatives so that $\vec x_0=\vec x_1+\vec x_2$. As in the previous paragraph, the vertices $\{x_1^\pi,\ldots,x_p^\pi\}$ span a non-additive simplex. We will now check that $\pi(\sigma)=\{x_0^\pi, x_1^\pi,\ldots,x_p^\pi\}$ forms a simplex by checking that $\{x_0^\pi, x_1^\pi,x_2^\pi\}$ is an additive simplex.
%
%
Note that $\{x_0,x_1,x_2\}$ is not carrying since all of the carrying simplices have been subdivided. Thus $\{x_0^\pi,x_1^\pi,x_2^\pi\}$ forms a simplex. Observe that \href{i}{Condition \eqref{i}} implies that the sum of the submodules $x_0^\pi,x_1^\pi,x_2^\pi$ satisfies  \[ x_0^\pi + x_1^\pi + x_2^\pi+w=x_0 \oplus x_1 \oplus w.   \]  Since the module $x_0^\pi + x_1^\pi + x_2^\pi+w$ is only rank $3$, $\{x_0^\pi,x_1^\pi,x_2^\pi\}$ must be additive. Thus,  $\pi(\sigma)=\{x_0^\pi, \ldots, x_p^\pi\}$ forms a simplex in $\Linkh{}^<_{BA_n^m(\OK)  }(w)$.

\vspace{.5em}
\noindent {\bf Case: $\tau$ contains interior vertices of some $T(\sigma)$.} 
\vspace{.5em}

Suppose that $\tau$ contains interior vertices of $T(\sigma)$ and that $\sigma=\{v_0,v_1,v_2\}$ is an internally additive carrying $2$-simplex; the externally additive case is similar. Since any simplex of $\Linkh{}_{BA_n^m(\OK)}(w)$ can contain at most one carrying subsimplex, we can decompose $\tau$ as the join $\tau = \alpha \ast \beta$ of simplices $\alpha,\beta \subset X$ such that \begin{itemize} 
\item $\alpha = T(\sigma) \cap \tau$,
\item $\beta$ contains no interior vertices of any $T(\sigma')$ for any carrying simplex $\sigma'$. 
\end{itemize}  
Note that $\pi(\alpha)=H_\sigma(\alpha)$ is a simplex of $\Link{}^<_{BA(v_0 \oplus v_1 \oplus w)(\OK)}(w)$. Since the star of any simplex in $T(\sigma)$ is contained in \[ \Link_{\Linkh_{BA_n^m(\OK)}(w)  }(\sigma ) \ast T(\sigma),\] $\beta$ is a simplex of 
\[\Link_{ \Linkh_{BA_n^m(\OK)}(w)}(v_0,v_1,v_2) = \Link_{B_n^m(\OK)}(v_0,v_1,w) = \Link_{B_n^m(\OK)}(v_0^{\pi},v_1^{\pi},w) .\] 
There is a natural inclusion \[ \Link{}^<_{BA(v_0 \oplus v_1 \oplus w)(\OK)}(w)  \ast \Link^<_{B_n^m(\OK)}(v_0^{\pi},v_1^{\pi},w) \hookrightarrow \Linkh^<_{BA_n^m(\OK)}(w)\] 
since simplices in $\Link{}^<_{BA(v_0 \oplus v_1 \oplus w)(\OK)}(w) $ all arise from (possibly augmented) partial frames for $v_0 \oplus v_1 \oplus w = v_0^{\pi} \oplus v_1^{\pi} \oplus w$, whereas simplices in $\Link^<_{B_n^m(\OK)}(v_0^{\pi},v_1^{\pi},w)$ all arise from non-augmented  partial frames for a direct complement of $v_0^{\pi} \oplus v_1^{\pi} \oplus w$ in $\OK^{n+m}$. 
In particular, $\pi(\sigma)=\pi(\alpha) \ast \pi(\beta)$ forms a simplex in $\Linkh_{BA_n^m(\OK)}(w)$.

\vspace{1em} 

Having checked that $\pi$ gives a simplicial map $X\m \Linkh{}^<_{BA_n^m(\OK)}(w)$, we use this to construct the desired retraction. Let $\Delta^d$ denote the standard $d$-simplex. Pick for each carrying simplex $\sigma$, pick a homeomorphism $h_\sigma \colon \Delta^{\mr{dim}(\sigma)}\m T(\sigma)  $ which is simplicial on the boundary. They induce homeomorphisms \[\mr{id} \ast h_\sigma \colon \mr{Star}_{\Linkh_{BA_n^m(\OK)}(w)}(\sigma ) \overset{\cong}{\lra} \Link_{\Linkh_{BA_n^m(\OK)}(w)}(\sigma ) \ast T(\sigma)\] and assemble to give a homeomorphism \[h \colon\Linkh_{BA_n^m(\OK)}(w) \lra X.\] The map $h$ is not simplicial. However, since simplices in $\Linkh{}^<_{BA_n^m(\OK)}(w)$ cannot be carrying by \href{iii}{Condition \eqref{iii}}, the composition \[\pi \circ h \circ \iota \colon \Linkh{}^<_{BA_n^m(\OK)}(w) \lra \Linkh{}^<_{BA_n^m(\OK)}(w) \] is simplicial. Thus we can check that it is the identity by checking it is the identity on vertices. This follows from \href{iii}{Condition \eqref{iii}}. Thus \[\pi \circ h \colon \Linkh{}_{BA_n^m(\OK)}(w) \lra \Linkh{}^<_{BA_n^m(\OK)}(w) \] is a retraction of $\iota \colon \Linkh{}^<_{BA_n^m(\OK)}(w) \lra \Linkh{}_{BA_n^m(\OK)}(w)$.\end{proof}

\autoref{RETLEM} has the following corollary. 

\begin{corollary}\label{4.17} Let $\OK$ denote the Gaussian integers or Eisenstein integers. Let $w$ be a line in $BA^n_m(\OK)$ with $F(w) \neq 0$.  If $\Linkh_{BA_n^m(\OK)}(w)$ is $d$-connected, so is $\Linkh{}^<_{BA_n^m(\OK)}(w)$.
\end{corollary}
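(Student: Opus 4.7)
The plan is to deduce this immediately from \autoref{RETLEM}. That lemma produces a (topological, not simplicial) retraction
\[
\pi \circ h \colon \Linkh_{BA_n^m(\OK)}(w) \lra \Linkh{}^<_{BA_n^m(\OK)}(w)
\]
of the inclusion $\iota$. It is a standard fact that connectivity is inherited by retracts: if $X$ is $d$-connected and $Y \hookrightarrow X$ admits a retraction $r \colon X \to Y$, then $Y$ is $d$-connected as well.

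To spell this out, consider any continuous map $\phi \colon S^k \to \Linkh{}^<_{BA_n^m(\OK)}(w)$ for $k \leq d$. Post-composing with $\iota$ gives a map $\iota \circ \phi \colon S^k \to \Linkh_{BA_n^m(\OK)}(w)$, which is null-homotopic by the hypothesis that $\Linkh_{BA_n^m(\OK)}(w)$ is $d$-connected. Let $H \colon D^{k+1} \to \Linkh_{BA_n^m(\OK)}(w)$ be a null-homotopy. Then $(\pi \circ h) \circ H \colon D^{k+1} \to \Linkh{}^<_{BA_n^m(\OK)}(w)$ is a null-homotopy of $(\pi \circ h) \circ \iota \circ \phi = \phi$, using that $\pi \circ h \circ \iota = \mathrm{id}$. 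Hence $\phi$ is null-homotopic in $\Linkh{}^<_{BA_n^m(\OK)}(w)$, so the latter is $d$-connected. No real obstacle remains; the work has already been done in \autoref{RETLEM}.
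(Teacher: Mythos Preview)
Your proposal is correct and is exactly the argument the paper has in mind: the corollary is stated immediately after \autoref{RETLEM} with no proof, precisely because the existence of a retraction makes $\Linkh{}^<_{BA_n^m(\OK)}(w)$ a retract of $\Linkh_{BA_n^m(\OK)}(w)$, and retracts inherit $d$-connectivity by the standard argument you wrote out.
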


 The following result is a direct adaptation of Church--Putman \cite[Proposition 4.14]{CP}. 

\begin{lemma} \label{LinkInduction} Let $\OO$ be a Euclidean domain.  Let $n \geq 1$ and $m \geq 0$ such that $m+n \geq 2$. Assume that $BA^{m'}_{n'}(\OO)$ is Cohen--Macaulay of dimension $n'$ for all $1 \leq n' < n$ and all $m' \geq 0$ satisfying $m'+n' = m+n$. Then for every $p$-simplex $\sigma$ of $BA^{m}_{n}(\OO)$, the link $\Link_{BA^{m}_{n}(\OO)}(\sigma)$ is Cohen--Macaulay of dimension $(n-p-1)$. 
\end{lemma}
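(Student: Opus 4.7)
I will argue by downward induction on the dimension $p$ of $\sigma$. The base case $p=n$ is vacuous: $\Link_{BA_n^m(\OO)}(\sigma)=\emptyset$, which is $(-1)$-spherical. For the inductive step I assume the claim for all simplices of dimension $>p$ and split into two cases depending on whether $\sigma$ is additive in the sense of \autoref{DefnAdditive}.

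\textbf{Additive case.} Suppose $\sigma=\{v_0,\ldots,v_p\}$ contains an augmented vertex, say $\vec v_0 = u_1\vec a + u_2\vec b$ for some $a,b\in\sigma\cup\{e_1,\ldots,e_m\}$. Then $V' \coloneqq \mr{span}(\vec e_1,\ldots,\vec e_m,\vec v_0,\ldots,\vec v_p)$ has rank $m+p$. Since every simplex of $BA_{m+n}(\OO)$ admits at most one augmentation, neither can any vertex of $\Link(\sigma)$ lie in $V'$ nor can any simplex of the link be augmented in its own right, as either would force a second augmentation in $\sigma\cup\tau\cup\{e_j\}$. After choosing a complementary direct summand this identifies $\Link_{BA_n^m(\OO)}(\sigma) \cong B_{n-p}^{m+p}(\OO)$, which is Cohen--Macaulay of dimension $n-p-1$ by \autoref{thm:maazen}.

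\textbf{Non-additive case.} If $\sigma$ is non-additive then $V' = \mr{span}(\vec e_1,\ldots,\vec e_m, \vec v_0,\ldots,\vec v_p)$ has rank $m+p+1$. I claim
\[\Link_{BA_n^m(\OO)}(\sigma) = L \cup_B (A \ast B),\]
where $L = \Linkh_{BA_n^m(\OO)}(\sigma)$ is the full subcomplex on vertices outside $V'$, $B\subseteq L$ consists of those simplices that are non-augmented partial frames, and $A$ is the discrete set of lines in $V'\setminus\mr{span}(\vec e_1,\ldots,\vec e_m)$ of the form $u_1\vec a + u_2\vec b$ with $a,b\in\sigma\cup\{e_1,\ldots,e_m\}$. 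A rank count in $V'$ shows that any simplex of $\Link(\sigma)$ contains at most one vertex of $A$: two such vertices together with $\sigma\cup\{e_j\}$ would give $m+p+3$ lines lying in the rank $m+p+1$ submodule $V'$, contradicting the bound rank $\geq$ cardinality $-1$ for augmented partial frames. Choosing a splitting $\OO^{m+n}=V'\oplus W$ identifies
\[L \cong BA_{n-p-1}^{m+p+1}(\OO) \quad\text{and}\quad B \cong B_{n-p-1}^{m+p+1}(\OO).\]
By the hypothesis of the lemma $L$ is Cohen--Macaulay of dimension $n-p-1$, and by \autoref{thm:maazen} $B$ is Cohen--Macaulay of dimension $n-p-2$. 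Since $A$ is discrete and $B$ is $(n-p-3)$-connected, the join $A\ast B$ is at least $(n-p-2)$-connected (contractible if $|A|=1$). Mayer--Vietoris for the pushout, using that $B\hookrightarrow A\ast B$ is null-homotopic because it factors through a cone, then yields that $\Link_{BA_n^m(\OO)}(\sigma)$ is $(n-p-2)$-connected, hence $(n-p-1)$-spherical.

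\textbf{Cohen--Macaulay condition and main obstacle.} For every $k$-simplex $\tau$ of $\Link(\sigma)$, one has $\Link_{\Link(\sigma)}(\tau) = \Link_{BA_n^m(\OO)}(\sigma\cup\tau)$, and $\sigma\cup\tau$ is a $(p+k+1)$-simplex of dimension strictly greater than $p$; the downward inductive hypothesis gives this link as Cohen--Macaulay of dimension $n-p-k-2$, completing the Cohen--Macaulay condition. The main technical obstacle is the non-additive case: one must carefully enumerate the simplices of $\Link(\sigma)$ to justify the pushout decomposition and handle the degenerate boundary situations ($A=\emptyset$ when $m=p=0$, where $\Link(\sigma)=L$; and $L=B=\emptyset$ when $p=n-1$ and $V'$ fills $\OO^{m+n}$, where $\Link(\sigma)=A$ is discrete but nonempty), and then to verify that the Mayer--Vietoris computation goes through uniformly.
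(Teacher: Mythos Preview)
Your argument is correct and follows essentially the same route as the paper: the same additive/non-additive case split, the same identifications $\Linkh_{BA^m_n(\OO)}(\sigma)\cong BA^{m+p+1}_{n-p-1}(\OO)$ and its non-augmented subcomplex $\cong B^{m+p+1}_{n-p-1}(\OO)$, and the same observation that every ``extra'' vertex $a\in A$ has link exactly $B$ inside $\Linkh$. The paper packages the last step as ``coning off a Cohen--Macaulay subcomplex'' and cites Church--Putman \cite[Lemma~4.13]{CP}, whereas you write the link explicitly as the pushout $L\cup_B(A\ast B)$ and run Mayer--Vietoris; your downward induction on $p$ is a clean alternative to invoking that lemma for the Cohen--Macaulay condition on links.

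One small point to tighten: Mayer--Vietoris by itself only gives $\widetilde H_k(\Link(\sigma))=0$ for $k\le n-p-2$, not $(n-p-2)$-connectedness. When $n-p-2\ge 1$ you should first observe (van Kampen) that $L$ and $A\ast B$ are simply connected and $B$ is path-connected, hence $\Link(\sigma)$ is simply connected, and then apply Hurewicz. This is routine and implicit in \cite[Lemma~4.13]{CP}, but it is worth saying explicitly since you are not citing that lemma.
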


\begin{proof} The proof of Church--Putman \cite[Proposition 4.14]{CP} applies without modification; we summarize it briefly. If a $p$-simplex $\sigma$ is additive, then $\Link_{BA^{m}_{n}(\OO)}(\sigma) \cong B^{m+p}_{n-p}(\OO)$ is Cohen--Macaulay of dimension $(n-p-1)$ by \autoref{thm:maazen}.

Next, suppose we have a non-additive $(n-1)$-simplex $\sigma = \{v_1, \ldots, v_n\}$. Then $\Link_{BA^{m}_{n}(\OO)}(\sigma)$ contains the vertex corresponding to $\vec v_0 = \vec e_1 + \vec v_1$ (if $m \geq 1$) or $\vec v_0 = \vec v_1 + \vec v_2$ (if $m=0$), so $\Link_{BA^{m}_{n}(\OO)}(\sigma)$ is non-empty. 

Finally, suppose we have a non-additive $p$-simplex $\sigma$ with $p<n-1$. Then $\Linkh_{BA^{m}_{n}(\OO)}(\sigma) \cong BA_{n-p-1}^{m+p+1}(\OO)$ is Cohen--Macaulay of dimension $(n-p-1)$ by assumption. Each vertex \[v \in \Link_{BA^{m}_{n}(\OO)}(\sigma)  \setminus \Linkh_{BA^{m}_{n}(\OO)}(\sigma)\] has $\Link_{\Link_{BA^{m}_{n}(\OO)}(\sigma)  } (v) \cong B^{m+p+1}_{n-p-1}(\OO)$ contained in  $\Linkh_{BA^{m}_{n}(\OO)}(\sigma)$, and so the addition of each such vertex $v$ has the effect of coning off a subcomplex of $\Linkh_{BA^{m}_{n}(\OO)}(\sigma)$ that is Cohen--Macaulay of dimension $(n-p-2)$. The result follows by Church--Putman \cite[Lemma 4.13]{CP}. 
\end{proof}

We now prove \autoref{BAnOtherRings}, which states that $BA_n^m(\OK)$ is Cohen--Macaulay of dimension $n$ for $\OK$ the Gaussian integers or Eisenstein integers. Other than the proof of \cite[Proposition 4.17]{CP}, the proof of Church--Putman \cite[Theorem C']{CP} goes through without modification for all rings which are additively generated by multiplicative units and have a multiplicative Euclidean function with the property that if $|a|=|b|>0$, then there is a unit $u$ with $|a-ub|<|a|$. In place of \cite[Proposition 4.17]{CP}, we instead invoke our \autoref{4.17}. That the ring is Euclidean is used throughout the proof \cite[Theorem C']{CP}; that it is additively generated by multiplicative units is used in the base case of the induction \cite[Page 21]{CP} and appears here as \autoref{linkUnits}.



Recall that a combinatorial $i$-sphere is a simplicial complex that is PL-homeomorphic to the boundary of an $(i+1)$-simplex and a combinatorial $i$-disk is a simplicial complex that is PL-homeomorphic to an $i$-simplex. Moreover, links of $p$-simplices in the interior of combinatorial $i$-spheres and combinatorial $i$-disks are combinatorial $(i-p-1)$-spheres.
Given a simplicial complex $X$, the simplicial approximation theorem implies that we can represent every homotopy class of maps $S^i \to X$ by a simplicial map from a combinatorial $i$-sphere to $X$. It also implies that every null-homotopic simplicial map from a combinatorial $i$-sphere to $X$ can be extended to a simplicial map from a combinatorial $(i+1)$-disk to $X$. Replacing the star of a $p$-simplex in a combinatorial $i$-sphere by a different $(i-p)$-disk (with the same combinatorial $(i-p-1)$-sphere as boundary) results in a combinatorial $i$-sphere again. We will use this fact while construct homotopies in the following proof. For a detailed introduction to the topic we refer the reader to Rourke-Sanderson \cite{Rourke-Sanderson}.


\begin{proof}[Proof of  \autoref{BAnOtherRings}] We summarize Church--Putman \cite[Proof of Theorem C']{CP}. The proof proceeds by induction on $n$ and $m$. The base case is that $BA_1^m(\OK)$ is connected for all $m \geq1$ which was proven in \autoref{linkUnits}. Now let $n \geq 1$ and $m \geq 0$ such that $m+n \geq 2$, and assume that $BA^{m'}_{n'}(\OK)$ is Cohen--Macaulay of dimension $n'$ for all $1 \leq n' < n$ with $2 \leq n'+m' \leq n+m$. 

The links of simplices in $BA^{m}_{n}(\OK)$ are appropriately highly-connected by \autoref{LinkInduction}, so it suffices to prove that $BA^{m}_{n}(\OK)$ is $(n-1)$-connected. 

Let $\phi \colon S^i \to BA^{m}_{n}(\OK)$ be a simplicial map from a combinatorial $i$-sphere for $i\le n-1$. Let 
\[M(\phi) \coloneqq \max_{ \text{vertices } x \in S^i} F( \phi(x))\]
where $F$ is defined in \autoref{Defn:fF}; the function $M$ quantifies the `badness' of the map $\phi$. Our goal is to homotope $\phi$ to reduce $M$. Then we can inductively homotope the map $\phi$ to a map $\phi'$ for which $\phi'(x)$ has $(m+n)^{th}$ coordinate zero for every vertex $x \in S^i$. The image of $\phi'$ is in the star of the vertex $e_{m+n}$, and so it can then be homotoped to the constant map at $e_{m+n}$. 
 
Assume $M(\phi)=M>0$. Following Church--Putman, we proceed in four steps. In the first step, we homotope $\phi$ so that for every simplex $\sigma \in S^i$ mapping to an additive simplex of $BA^{m}_{n}(\OK)$ satisfies $F(\phi(x)) < M$ for all vertices $x \in \sigma$. We must achieve this homotopy without increasing $M(\phi)$. Choose $\sigma$ of maximal dimension $q$ among those simplices in $S^i$ satisfying the following properties $(\ast)$: 
\begin{itemize}
\item $\phi(\sigma) = \{v_0, \ldots, v_p\}$ is additive, say, $\vec v_0 = \vec v_1 + \vec v_2$ for some generators of $v_0, v_1, v_2$, 
\item $F(\phi(x))=M$ for some vertex $x \in \sigma$, and
\item $F(\phi(x))=M$ for every vertex $x \in \sigma$ with $\phi(x) \in \{v_3, \ldots,v_p\}$. 
\end{itemize} 
Note that $q\ge p$. By assumption of maximality, 
\[\phi(\Link_{S^i}(\sigma)) \; \subseteq \;  \Link^{<}_{BA^{m}_{n}(\OK)}(\{v_0, v_1, \ldots, v_p\}) \; = \;  \Link^{<}_{B^{m}_{n}(\OK)}(\{v_1, \ldots, v_p\}).\]
For the last equality we have used the assumption that, possibly after re-indexing $v_0, v_1, v_2$,  $F(v_i)=M$ for some $i=1, \ldots,p$.   The complex $ \Link^{<}_{B^{m}_{n}(\OK)}(\{v_1, \ldots, v_p\})$ is  $(n-p-2)$-connected by  \autoref{LinkB}. Thus, the restriction 
\[S^{i-q-1} \cong \Link_{S^i}(\sigma) \lra  \Link^{<}_{B^{m}_{n}(\OK)}(\{v_1, \ldots, v_p\})  \]
 is null-homotopic as $i-q-1 \le n-p-2$. This implies 
 there is a combinatorial $(i-q)$-disk $D$ whose boundary is isomorphic to the combinatorial $(i-q-1)$-sphere $ \Link_{S^i}(\sigma)$ and a map 
 \[g \colon D \lra  \Link^{<}_{B^{m}_{n}(\OK)}(\{v_1, \ldots, v_p\})  \]
 extending  $\phi\vert_{\Link_{S^i}(\sigma)}$. Because $g$ maps to the link of $\phi(\sigma)$, we may define the join of the maps
 \[ ( \phi|_{\sigma} \ast g ) \colon  (\sigma \ast D) \to BA^m_n.\] 
 Let $Z$ be 
the combinatorial $i$-sphere
 given by replacing $\mr{Star}_{S^i}(\sigma)$ in $S^i$ with 
$D \ast \partial \sigma$. 
 Let $\hat \phi \colon Z \m BA_n^m(\OK)$ be given by the formula
\[\hat \phi(y) =
  \begin{cases}
    \phi(y) & \text{ if } y \in Z \setminus (\partial \sigma \ast D),\\ 
 (\phi|_{\sigma}\ast g)(y) & \text{ if } y \in  \partial \sigma \ast D. 
\end{cases} \]
Note that this map is continuous as $\phi$ and $  (\phi|_{\sigma}\ast g)$ coincide on $\partial \sigma \ast \partial D \cong \partial \mr{Star}_{S^i}(\sigma)$. Moreover, observe that $\phi$ and $\hat \phi$ are homotopic through $\phi|_{\sigma} \ast g$ (extended by the constant homotopy outside of the star of $\sigma$).
The new map $\hat \phi$ has one fewer maximal simplices satisfying $(\ast)$. A similar argument applies to externally additive simplices. Iterating this procedure produces the desired map. 

In the second step, Church--Putman homotope $\phi$ so that if vertices $x_1, x_2 \in S^i$ satisfy $\phi(x_1)=\phi(x_2)=v$ with $F(v)=M$, then $x_1, x_2$ are not joined by an edge. This new map must not increase $M(\phi)$ and must retain the properties achieved in Step 1. Choose a simplex $\sigma$ of maximal dimension with the properties that $\phi|_{\sigma}$ is not injective and $F(\phi(x))=M$ for every vertex $x \in \sigma$. Again let $p=\dim(\phi(\sigma))$. Then, using the properties achieved in Step 1, $\Link_{S^i}(\sigma)$ must map to the subcomplex
$$\Link^{<}_{B^{m}_{n}(\OK)}(\phi(\sigma))\; \subseteq \;  \Link^{<}_{BA^{m}_{n}(\OK)}(\phi(\sigma)). $$ This subcomplex is $(n-(p-1)-2)$--connected by  \autoref{LinkB}, so again we can homotope $\phi$ to remove the simplex $\phi(\sigma)$ while preserving our desired properties. 

In the third step, Church--Putman further homotope the map $\phi$ so that it retains the properties from Steps 1 and 2, and has the additional property that, whenever vertices $x_1, x_2 \in S^i$ satisfy $$F(\phi(x_1)) = F(\phi(x_2))=M,$$ then $x_1$ and $x_2$ are not connected by an edge.  Suppose $\{x_1, x_2\}$ is an edge violating this condition, with $\phi(x_1) = v_1$ and $\phi(x_2) = v_2$. Pick representatives $\vec v_1$ and $\vec v_2$. By \autoref{lem0}, there is a unit $u$ with $F(\vec v_1-u \vec v_2)<F(\vec v_1)$. Let $v_0 = \mr{span}(\vec v_1-u \vec v_2)$. By the property ensured in Step 2, $v_1 \neq v_2$. Thus $\vec v_1-u \vec v_2 \neq \vec 0$ and so $v_0$ is a line. Given the property ensured in Step 1,  the image of $\Link_{S^i}(\{x_1, x_2\})$ is contained in $\Link_{BA_n^m(\OK)}(\{v_0,v_1, v_2\})$. We can therefore homotope the map $\phi$ to map $\{x_1,x_2\}$ to the concatenation of $\{v_1,v_0\}$ with $\{v_0,v_2\}$. In particular, we replace the simplicial structure on $$ \mathrm{Star}_{S^i}(\{x_1, x_2\}) = \{x_1, x_2\} \ast \mathrm{Link}_{S^i}(\{x_1, x_2\})$$ with the join of the barycentric subdivision of the edge $\{x_1, x_2\}$ and $\mathrm{Link}_{S^i}(\{x_1, x_2\})$. 
This procedure removes the edge $\{v_1, v_2\}$ from the image of the map and preserves the properties from the previous steps. 

In the final step, Church--Putman homotope the map $\phi$ to reduce $M(\phi)$. Let $x \in S^i$ be a vertex such that $\phi(x)=v$ with $F(v)=M$. The properties established in the previous steps ensure that
\[\phi(\Link_{S^i}(x)) \; \subseteq \;  \Linkh{}^<_{BA^{m}_{n}(\OK)}(v).\] The complex $\Linkh_{BA^{m}_{n}(\OK)}(v) \cong BA_{n-1}^{m+1}(\OK)$ is $(n-2)$--connected by inductive hypothesis, thus so is $\Linkh{}^<_{BA^{m}_{n}(\OK)}(v)$  by \autoref{4.17}. We can therefore homotope $\phi$ to remove $v$ from its image while preserving the properties from previous steps. Iterating this final step will reduce $M(\phi)$ and complete the proof. 
\end{proof}

\subsection{New non-connectivity results}

In this subsection, we show that $BA_n(\OO)$ may not always be Cohen--Macaulay, but is always highly connected. We begin with a general lemma about links in simplicial complexes.

\begin{lemma} \label{linkGeneral}
Let $X$ be a simplicial complex and fix a simplicial structure on $S^1$. Let $x$ be a vertex of $S^1$ and let $y,z \in \Link_{S^1}(x)$. Let $\phi \colon S^1 \m X$ be a simplicial map such that no vertex other than $x$ maps to $\phi(x)$. If $\phi_*([S^1]) = 0$ in $H_1(X)$, then $[\phi(y)]=[\phi(z)]$ in $\pi_0(\Link_X(\phi(x)))$.
\end{lemma}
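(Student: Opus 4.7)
Let $v \coloneqq \phi(x)$. The strategy is to track the edges incident to $v$ that appear in the simplicial $1$-cycle $\phi_*[S^1]$ and in any $2$-chain bounding it, and then use the cone structure of the closed star of $v$ to translate the resulting identity into a boundary statement in $C_0(\Link_X(v);\Z)$.

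Since $x$ is the unique vertex of $S^1$ mapping to $v$, no edge of $S^1$ can map degenerately to $v$, and the only edges of $S^1$ whose images in $X$ are incident to $v$ are $\{x,y\}$ and $\{x,z\}$, mapping to $\{v,\phi(y)\}$ and $\{v,\phi(z)\}$ respectively. Choosing an orientation of $S^1$, the chain $\phi_*[S^1] \in C_1(X;\Z)$ therefore has, as its edges incident to $v$, precisely $\pm\bigl([v,\phi(y)] - [v,\phi(z)]\bigr)$; the opposite signs are forced, because $\phi_*[S^1]$ is a cycle and hence has no $v$-component in its boundary.

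Now assume $\phi_*[S^1] = \partial c$ for some $c \in C_2(X;\Z)$, and split $c = c_v + c'$, where $c_v$ consists of those $2$-simplices of $c$ that contain $v$. The boundary $\partial c'$ involves no edges incident to $v$, so all edges of $\partial c$ incident to $v$ come from $c_v$. Writing $c_v = \sum_i n_i\, v \ast e_i$ with $e_i = \{\alpha_i,\beta_i\}$ an edge of $\Link_X(v)$, the identity $\partial(v \ast e_i) = e_i - [v,\beta_i] + [v,\alpha_i]$ shows that the edges of $\partial c_v$ incident to $v$ are exactly
\[
\sum_i n_i\bigl([v,\alpha_i] - [v,\beta_i]\bigr).
\]

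Under the canonical bijection $[v,w]\leftrightarrow w$ from edges of $X$ incident to $v$ to vertices of $\Link_X(v)$, equating the two expressions for the $v$-incident edges of $\phi_*[S^1]$ yields
\[
\pm\bigl(\phi(y) - \phi(z)\bigr) \;=\; -\partial\Bigl(\textstyle\sum_i n_i\, e_i\Bigr) \quad \text{in } C_0(\Link_X(v);\Z),
\]
so $\phi(y)-\phi(z)$ is a boundary in $C_*(\Link_X(v);\Z)$, and hence $[\phi(y)] = [\phi(z)]$ in $H_0(\Link_X(v);\Z)$. Since $H_0$ with integer coefficients is free abelian on path components, this is exactly the claim that $\phi(y)$ and $\phi(z)$ lie in the same component of $\Link_X(\phi(x))$. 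The argument is essentially bookkeeping via the cone isomorphism $C_k\bigl(\{v\}\ast\Link_X(v)\bigr) \cong C_{k-1}(\Link_X(v))$, together with the hypothesis that $x$ is the unique preimage of $v$, which ensures that the $v$-incident contributions to $\phi_*[S^1]$ involve only the two vertices $\phi(y),\phi(z)$; I anticipate no real difficulty beyond keeping signs straight.
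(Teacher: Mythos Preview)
Your proof is correct and follows essentially the same approach as the paper's: both split the bounding $2$-chain according to whether simplices contain $v=\phi(x)$, then track the edges incident to $v$. The only difference is packaging---the paper phrases the conclusion as $[\beta]=0$ in $H_1(X,\Link_X(v))$ and then applies the connecting homomorphism $\delta$ of the pair, whereas you carry out the same computation directly at the chain level via the cone identification; the content is identical.
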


\begin{proof}
Let $C_*$ denote cellular chains. Suppose that  $\phi_*([S^1]) = 0$, and let $\alpha \in C_2(X)$ be a chain such that \[\partial(\alpha)=\phi_*([S^1]) \in C_1(X).\] 
The chain $\alpha$ can be written as $\alpha_1+\alpha_2$, with $\alpha_1$ a sum of 2-simplices that have $\phi(x)$ as a vertex and $\alpha_2$ a sum of 2-simplices that do not have $\phi(x)$ as a vertex. Let $\beta \in C_1(X)$ denote the chain associated to the simplicial path with vertices $\phi(y)$, $\phi(x)$, and $\phi(z)$. An instance of this complex is shown in \autoref{LoopLink}.
{\begin{figure}[!ht]  
\centering
\labellist
\Large \hair 0pt
\pinlabel { \color{violet} ${ \phi(y) }$} [tr] at -1 23
\pinlabel { \color{violet} $\small{ \phi(x) }$} [r] at 10 32
\pinlabel { \color{violet} $\small{ \phi(z) }$} [r] at 43 32
\pinlabel { \color{violet} $\small{ \beta }$} [r] at 25 36
\pinlabel { \color{teal} ${ \alpha_1 }$} [tl] at 14 20
\pinlabel { \color{black} $\small{ \phi_*([S^1]) }$} [l] at  41 16

\endlabellist \quad\\ \quad \\ 
\includegraphics[scale=2.5]{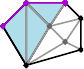} 
\caption{An illustrative example of the chain $\alpha$.} 
\label{LoopLink}
\end{figure} 
}

Each $1$-simplex in the boundary of a 2-simplex appearing in $\alpha_1$ either contains the vertex $\phi(x)$, or is contained in $\Link_{X}(\phi(x))$. By assumption on $\phi$, the boundary $\partial(\alpha_1)$ must only pass through the vertex $\phi(x)$ once, and so it must be a sum of $\beta$ and terms in the link of $\phi(x)$. Hence \[\partial(\alpha_1)=\beta \in C_1(X,\Link_{X}(\phi(x))).\]

Thus, $[\beta]$ vanishes in $H_1(X,\Link_{X}(\phi(x)))$. This implies that its image $\delta([\beta])$ vanishes  in $H_0(\Link_{X}(\phi(x)))$ under the connecting homomorphism $\delta$ in the long exact sequence of the pair $(X,\Link_{X}(\phi(x)))$. Since 
\[\delta([\beta]) = [\phi(y)] -[\phi(z)],\]
this implies that $\phi(y)$ and $\phi(z)$ are in the same path component.
\end{proof}

\begin{definition}
We say that a ring $\OO$ \emph{has detours} if there are $r_1, r_2 \in \OO$ such that \begin{enumerate}[\noindent (i)]
\item $r_1 - r_2$ is not a sum of units.
\item There is a simplicial path in $B_2(\OO)$ from $\mr{span}\begin{psmallmatrix}r_1 \\ 1\end{psmallmatrix}$ to $\mr{span}\begin{psmallmatrix}r_2 \\ 1\end{psmallmatrix}$ that avoids $\mr{span}\begin{psmallmatrix}1 \\ 0\end{psmallmatrix}$.
\end{enumerate}
\end{definition}

An example of a detour is given in \autoref{sqrt7loop} for $\cO_K = \bZ[\sqrt{7}]$. Each vertex is labelled by a vector spanning the corresponding line. 

\begin{figure}[t]  
	\centering
	\labellist
	\Large \hair 0pt
	\pinlabel { \color{black} $\small{  \displaystyle \begin{pmatrix} 1 \\ 0 \end{pmatrix} }$} [tr] at -1 26
	\pinlabel { \color{black} $\small{  \displaystyle \begin{pmatrix} \sqrt{7} \\ 1 \end{pmatrix} }$} [tl] at 25 26
	\pinlabel { \color{black} $\small{ \displaystyle \begin{pmatrix} 8 \\-3 \end{pmatrix} }$} [bl] at  25 -1
	\pinlabel { \color{black} $\small{  \displaystyle \begin{pmatrix} -3 \\1 \end{pmatrix} }$} [br] at  -1 -1
	\endlabellist
	\includegraphics[scale=2.5]{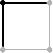} 
	\caption{A loop in $BA_2(\Z[\sqrt 7])$ coming from a detour (marked in gray).} 
	\label{sqrt7loop}
\end{figure} 
\begin{proposition} \label{H1}
If $\OO$ has detours, then $H_1(BA_2(\OO)) \neq 0$.
\end{proposition}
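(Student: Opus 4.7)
The plan is to build a specific $1$-cycle in $BA_2(\OO)$ from the detour data and then detect that it is non-trivial in $H_1$ by combining \autoref{linkGeneral} with the description of $\pi_0\bigl(\Link_{BA_2(\OO)}(e_1)\bigr)$ implicit in the proof of \autoref{linkUnits}.

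Let $r_1,r_2\in\OO$ and a simplicial path $P$ in $B_2(\OO)$ from $\mr{span}\begin{psmallmatrix} r_1 \\ 1 \end{psmallmatrix}$ to $\mr{span}\begin{psmallmatrix} r_2 \\ 1 \end{psmallmatrix}$ avoiding $e_1 = \mr{span}\begin{psmallmatrix} 1 \\ 0 \end{psmallmatrix}$ realize the detour hypothesis. First I would extend $P$ to a simplicial loop $\phi\colon S^1 \to BA_2(\OO)$ by appending the two edges $\bigl\{\mr{span}\begin{psmallmatrix} r_i \\ 1 \end{psmallmatrix},\,e_1\bigr\}$ for $i=1,2$; these are $1$-simplices of $B_2(\OO)\subseteq BA_2(\OO)$ since each pair of lines is a basis of $\OO^2$. (This is exactly the loop shown in \autoref{sqrt7loop}.) I would equip $S^1$ with a simplicial structure that has a unique vertex $x$ mapping to $e_1$, which is possible precisely because $P$ avoids $e_1$.

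Assume for contradiction that $\phi_*([S^1]) = 0$ in $H_1(BA_2(\OO))$. Let $y,z$ be the neighbors of $x$ in $S^1$, with $\phi(y) = \mr{span}\begin{psmallmatrix} r_2 \\ 1 \end{psmallmatrix}$ and $\phi(z) = \mr{span}\begin{psmallmatrix} r_1 \\ 1 \end{psmallmatrix}$. Since $x$ is the only preimage of $e_1$, \autoref{linkGeneral} applies and forces $\phi(y)$ and $\phi(z)$ to lie in the same path component of $\Link_{BA_2(\OO)}(e_1)$. This link equals $BA_1^1(\OO)$, because any line $v$ forming a $1$-simplex with $e_1$ in $BA_2(\OO)$ spans a direct complement to $\mr{span}(\vec e_1)$ and hence is not contained in it, so the full-subcomplex condition in the definition of $BA_1^1(\OO)$ is vacuous.

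By the explicit description in the proof of \autoref{linkUnits}, two vertices $\mr{span}\begin{psmallmatrix} a \\ 1 \end{psmallmatrix}$ and $\mr{span}\begin{psmallmatrix} b \\ 1 \end{psmallmatrix}$ of $BA_1^1(\OO)$ lie in the same path component if and only if $a-b$ is expressible as a sum of multiplicative units of $\OO$. Applied to $\phi(y)$ and $\phi(z)$, this forces $r_1-r_2$ to be a sum of units, contradicting condition (i) of the detour hypothesis. Hence $\phi_*([S^1])$ is a non-zero class in $H_1(BA_2(\OO))$. The only conceptual step is choosing the loop so that condition (i) is precisely what the $\pi_0$ of the link detects; the rest is a direct check that the hypotheses of \autoref{linkGeneral} and \autoref{linkUnits} hold, and I anticipate no substantial technical obstacle.
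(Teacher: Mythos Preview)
Your proof is correct and follows essentially the same approach as the paper: both build the loop by concatenating the detour with the two edges through $e_1$, identify $\Link_{BA_2(\OO)}(e_1)$ with $BA_1^1(\OO)$, use the description of its path components from the proof of \autoref{linkUnits}, and invoke \autoref{linkGeneral}. Your justification that $\Link_{BA_2(\OO)}(e_1)=BA_1^1(\OO)$ is slightly more explicit than the paper's, but the arguments are otherwise identical.
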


\begin{proof}

Let $v_1=\mr{span}\begin{psmallmatrix}r_1\\1\end{psmallmatrix}$  and $v_2=\mr{span}\begin{psmallmatrix}r_2\\1\end{psmallmatrix}$. We saw in the proof of \autoref{linkUnits} that two vertices  $\mr{span}(x_1,\ldots,x_m,1)$ and $\mr{span}(y_1,\ldots,y_m,1)$ are in the same path component of $BA_1^m(\OO)$ if and only if $x_i-y_i$ is a sum of units for each $i$. It follows that $v_1$ and $v_2$ are not in the same path component of \[BA_1^1(\OO) = \Linkh_{BA_2(\OO)}\left( e_1 \right)=\Link_{BA_2(\OO)}\left(e_1 \right).\] Consider the loop which is a concatenation of a detour from $v_1$ to $v_2$ with the path given by the three vertices $v_1$, $e_1$, and $v_2$. This loop is not zero in $H_1(BA_2(\OO))$ by \autoref{linkGeneral}. 
\end{proof}

Before we give several examples of rings with detours, we need the following lemma.

\begin{lemma} \label{pathBZ}
The full subcomplex of $B_2(\Z)$ minus the vertex $e_1$ is connected.

\end{lemma}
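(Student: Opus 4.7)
The plan is to identify $B_2(\Z)$ with the classical Farey graph. A vertex of $B_2(\Z)$ is a line $\mathrm{span}\begin{psmallmatrix}a\\b\end{psmallmatrix}$ with $\gcd(a,b)=1$, and two such lines span an edge iff $|ad-bc|=1$. Sending $\mathrm{span}\begin{psmallmatrix}a\\b\end{psmallmatrix}$ to the extended rational $a/b\in\Q\cup\{\infty\}$ (so $e_1\leftrightarrow \infty$), this is precisely the edge condition of the Farey graph. So I need to show that the Farey graph with the vertex $\infty$ removed is connected, i.e.\ that every rational can be joined to $0/1$ by a sequence of Farey-adjacent rationals avoiding $\infty$.

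I would show every rational is connected to $0/1$ by induction on the denominator $b\geq 1$ of $a/b$ (with $\gcd(a,b)=1$, $b>0$). For the base case $b=1$, the vertex is an integer $n$, and I would exhibit the explicit length-two path $n \;\longleftrightarrow\; \tfrac{2n+1}{2} \;\longleftrightarrow\; n+1$ (both edges check out since $|n\cdot 2-1\cdot(2n+1)|=|(n{+}1)\cdot 2-1\cdot(2n+1)|=1$), and iterate this to walk from any integer to $0$ without ever using $\infty$. For the inductive step, suppose $b\geq 2$. Since $\gcd(a,b)=1$, Bezout gives integers $c,d$ with $ad-bc=1$, and by reducing $d$ modulo $b$ we may assume $0<d<b$ (note $d\neq 0$ since otherwise $bc=\pm 1$ forces $b=1$). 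Then $c/d$ is Farey-adjacent to $a/b$ and has strictly smaller denominator, so by induction is connected to $0/1$ by a path avoiding $\infty$; prepending the edge $a/b\leftrightarrow c/d$ finishes the step.

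There is no real obstacle here: the only thing to be careful about is verifying that the denominator drops strictly in the inductive step and that the integer-to-integer detour through the half-integer vertex genuinely lies in $B_2(\Z)\setminus\{e_1\}$. Both are immediate from the Farey-edge criterion $|ad-bc|=1$.
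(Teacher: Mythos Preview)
Your proof is correct and self-contained. The identification of $B_2(\Z)$ with the Farey graph is standard, the base case detour $n \leftrightarrow (2n+1)/2 \leftrightarrow n+1$ is valid (both determinants are $\pm 1$, and the intermediate vertex has denominator $2$ so is not $e_1$), and the inductive step is the usual continued-fraction descent: choosing $0<d<b$ with $ad-bc=1$ is possible because $d\equiv a^{-1}\pmod b$ and $d=0$ would force $b=1$.

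The paper takes a different, more top-down route. It invokes two results of Church--Putman: that $B_2(\Z)$ is connected, and that $\Link_{BA_2(\Z)}(e_1)\cong BA_1^1(\Z)$ is connected (the latter because $\Z$ is additively generated by its units $\pm 1$). Given any path in $B_2(\Z)$ between two vertices $\neq e_1$, one removes each pass through $e_1$ by replacing the two incident edges with a path inside $\Link_{BA_2(\Z)}(e_1)$; this link lies inside $B_2(\Z)$ and avoids $e_1$. Your argument trades these black-box connectivity citations for an explicit hands-on construction via the Euclidean algorithm, which is arguably more elementary and makes the lemma logically independent of \cite{CP}. The paper's argument, on the other hand, generalizes verbatim to any ring for which $B_2$ and the relevant link are already known to be connected.
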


\begin{proof}
Let $v_1,v_2 \neq e_1$ be vertices in $B_2(\Z)$. Since $B_2(\Z)$ is connected \cite[Theorem 4.2]{CP}, we can find a simplicial path $\gamma$ from $v_1$ to $v_2$. Suppose the path contains $e_1$. By removing loops, we may assume that $\gamma$ only passes through $e_1$ once. Let $w_1$ and $w_2$ be the vertices adjacent to $e_1$ in the path. Since $\Link_{BA_2(\Z)}(e_1)$ is connected \cite[Theorem C]{CP}, we can find a path $\gamma'$ from $w_1$ to $w_2$ in the link. Note that $\Link_{BA_2(\Z)}(e_1) \subset B_2(\Z)$.  Let $\gamma''$ be $\gamma$ with $\{w_1,e_1\} \cup \{e_1,w_2\}$ replaced with $\gamma'$. Observe that $\gamma''$ gives a path from $v_1$ to $v_2$ that avoids $e_1$.
\end{proof}

We now show that Euclidean quadratic number rings not generated by units have detours.

\begin{proposition} \label{exotic}
Let $\OK$ be the ring of integers in $\KK=\Q(\sqrt d)$ for $d$ squarefree. Assume that $\OK$ is not generated by units but is Euclidean. Then $\OK$ has detours. 
\end{proposition}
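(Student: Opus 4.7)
The plan is to exhibit an explicit pair $(r_1, r_2)$ together with a path in $B_2(\OK) \setminus \{e_1\}$. Write $\OK = \Z[\alpha]$ where $\alpha = \sqrt{d}$ if $d \equiv 2,3 \pmod 4$ and $\alpha = (1+\sqrt{d})/2$ otherwise, and let $U \subseteq \OK$ denote the subgroup additively generated by units. Since $\OK$ is not additively generated by units, $U$ is a proper sublattice of $\OK$; in the imaginary quadratic case the unit group is $\{\pm 1\}$ so $U = \Z$, and in the real quadratic case $U = \Z + c\alpha\Z$ for some integer $c \geq 2$ (because $U \supseteq \Z$, $U$ contains the non-integer fundamental unit, and $U \neq \OK$). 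In both cases $\alpha \notin U$, so taking $r_1 = \alpha$ and $r_2 = 0$ gives $r_1 - r_2 = \alpha \notin U$, verifying condition (i) of the definition of detours.

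For condition (ii) in the real quadratic case, let $\epsilon = a + b\alpha$ be a fundamental unit (with $b \neq 0$). The vertex $\begin{psmallmatrix}a\\-b\end{psmallmatrix}$ is adjacent to $\begin{psmallmatrix}\alpha\\1\end{psmallmatrix}$ in $B_2(\OK)$, since $\det\begin{psmallmatrix}\alpha & a\\1 & -b\end{psmallmatrix} = -\epsilon$ is a unit. Crucially, $\begin{psmallmatrix}a\\-b\end{psmallmatrix}$ lies in $B_2(\Z)$ (as $a,b \in \Z$), which is a subcomplex of $B_2(\OK)$. By \autoref{pathBZ}, the full subcomplex $B_2(\Z) \setminus \{e_1\}$ is connected, so we can complete the path from $\begin{psmallmatrix}a\\-b\end{psmallmatrix}$ to $\begin{psmallmatrix}0\\1\end{psmallmatrix} = e_2$ entirely within $B_2(\Z) \setminus \{e_1\} \subseteq B_2(\OK) \setminus \{e_1\}$.

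In the imaginary quadratic case, the construction depends on the splitting of small primes. For $d \in \{-2, -7\}$, we have $\alpha\bar\alpha = 2$ (up to a unit), and a single-intermediate detour through $\begin{psmallmatrix}1\\\bar\alpha\end{psmallmatrix}$ works: one checks $\det\begin{psmallmatrix}\alpha & 1\\1 & \bar\alpha\end{psmallmatrix} = \alpha\bar\alpha - 1 = 1$ and $\det\begin{psmallmatrix}1 & 0\\\bar\alpha & 1\end{psmallmatrix} = 1$, both units. For $d = -11$, the prime 2 is inert (so $\alpha\bar\alpha = 3$, and $\alpha\bar\alpha - 1 = 2$ is not a unit), and this short detour fails. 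We instead exploit the split factorization $3 = \alpha\bar\alpha$ and construct a longer path with multiple intermediate vertices (of the form $\begin{psmallmatrix}p\\\alpha\end{psmallmatrix}$ and $\begin{psmallmatrix}p\\\bar\alpha\end{psmallmatrix}$), using the Euclidean algorithm in $\OK$ to choose the reductions.

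The main obstacle is the case $d = -11$, where the combination of a small unit group $\{\pm 1\}$ and the inertness of 2 rules out short detours and forces an explicit multi-step construction. In all cases, the Euclidean property of $\OK$ guarantees the required intermediate vertices exist and that the construction terminates.
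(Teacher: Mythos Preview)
Your approach is essentially the same as the paper's: take $r_1=\alpha$, $r_2=0$, then in the real case pass through $\begin{psmallmatrix}a\\-b\end{psmallmatrix}$ coming from the fundamental unit and finish inside $B_2(\Z)\setminus\{e_1\}$ via \autoref{pathBZ}; in the imaginary case handle $d=-2,-7,-11$ separately. Your unified treatment of $d\in\{-2,-7\}$ via $\alpha\bar\alpha=2$ and the intermediate vertex $\begin{psmallmatrix}1\\\bar\alpha\end{psmallmatrix}$ is a pleasant simplification over the paper, which uses a different (and slightly more complicated) explicit path for $d=-7$.

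There is a genuine gap in the case $d=-11$. You do not actually construct the detour; you only say that a longer path ``with multiple intermediate vertices (of the form $\begin{psmallmatrix}p\\\alpha\end{psmallmatrix}$ and $\begin{psmallmatrix}p\\\bar\alpha\end{psmallmatrix}$)'' exists, and you justify this by asserting that ``the Euclidean property of $\OK$ guarantees the required intermediate vertices exist and that the construction terminates.'' That assertion is not warranted. The Euclidean property gives connectivity of $B_2(\OK)$, but the standard Euclidean-algorithm path between two vertices may well pass through $e_1$, and you need a path \emph{avoiding} $e_1$. The argument of \autoref{pathBZ} for $\Z$ uses that $\Link_{BA_2(\Z)}(e_1)$ is connected, which (by \autoref{linkUnits}) is exactly the statement that $\Z$ is generated by units; for $\OK=\Z[(1+\sqrt{-11})/2]$ this link is disconnected, so that route is blocked. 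The paper closes this gap by simply writing down an explicit four-edge path
\[
\begin{pmatrix}\alpha\\1\end{pmatrix},\ \begin{pmatrix}2\\1-\alpha\end{pmatrix},\ \begin{pmatrix}\alpha\\2\end{pmatrix},\ \begin{pmatrix}1\\1-\alpha\end{pmatrix},\ \begin{pmatrix}0\\1\end{pmatrix}
\]
(note $1-\alpha=\bar\alpha$) and checking the four determinants are units. You should do the same: either exhibit the path and verify it, or give a genuine argument that $B_2(\OK)\setminus\{e_1\}$ is connected for this ring.
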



\begin{proof}

Let $\delta=\sqrt d$ for $d \neq 1$ (mod $4$) and $\delta=\frac{1+\sqrt d}{2}$ for $d = 1$ (mod $4$) so that $\OK=\Z[\delta]$. Since $\OK$ is not generated by units, $\delta$ is not a sum of units. Thus, it suffices to find a path from $\begin{psmallmatrix} \delta \\ 1\end{psmallmatrix}$ to $\begin{psmallmatrix} 0 \\ 1\end{psmallmatrix}$ that avoids $\begin{psmallmatrix} 1 \\ 0\end{psmallmatrix}$. 

\vspace{.5em}

We first consider the case $d>0$.  By Dirichlet's unit theorem, $\OK$ has infinitely many units so there is a unit of the form $a+b \delta$ with $b \neq 0$, $a,b \in \Z$. Note that the lines spanned by the vectors \[\left\{\begin{pmatrix} \delta \\ 1\end{pmatrix}, \begin{pmatrix} a \\ -b\end{pmatrix}  \right\} \] form an edge in $B_2(\OK)$. By \autoref{pathBZ}, there is a path in $B_2(\Z) \subseteq B_2(\OK)$ from $\begin{psmallmatrix} a \\ -b\end{psmallmatrix}$  to $\begin{psmallmatrix} 0 \\ 1\end{psmallmatrix}$ that avoids $\begin{psmallmatrix} 1 \\ 0\end{psmallmatrix}$. The concatenation of this path with the previous edge is a detour. 


 

\vspace{.5em}
 
Now assume $d<0$. The only Euclidean quadratic imaginary number rings have $d=-1,-2,-3,-7$ and $-11$. For $d=-1$ and $d=-3$, these are generated by units while the other three rings are not generated by units (see e.g. Ashrafi--V\'amos \cite[Theorem 7]{AshrafiVamos}). Note that the units in the case $d=-2,-7,$ or $-11$ are just $\pm 1$ and so $(a+b\delta)-(c+d\delta)$ is not a sum of units whenever $b \neq d$. Unlike for real quadratic number rings where we had a conceptual construction of detours, in the imaginary case, will just exhibit an explicit detour for each ring. 
 
\vspace{.5em}
\noindent {\bf Case: $d=-2$.} 
\vspace{.5em} 
The path with vertices spanned by the following vectors is a detour:
 \[ \begin{pmatrix} \delta \\ 1\end{pmatrix},  \begin{pmatrix} 1 \\ -  \delta \end{pmatrix} ,  \begin{pmatrix} 0 \\ 1 \end{pmatrix}.\]
 
 \vspace{.5em}
\noindent {\bf Case: $d=-7$.} 
\vspace{.5em} 
The path with vertices spanned by the following vectors is a detour:
 \[ \begin{pmatrix} \delta \\ 1\end{pmatrix},  \begin{pmatrix} 3-\delta \\ -  \delta \end{pmatrix} ,  \begin{pmatrix} -1+2\delta \\ 1 \end{pmatrix}.\]
 

 \vspace{.5em}
\noindent {\bf Case: $d=-11$.} 
\vspace{.5em} 
The path with vertices spanned by the following vectors is a detour:
 \[ \begin{pmatrix} \delta \\ 1\end{pmatrix},  \begin{pmatrix} 2 \\  1-\delta \end{pmatrix} , \begin{pmatrix} \delta \\  2 \end{pmatrix}, \begin{pmatrix} 1 \\  1-\delta \end{pmatrix}, \begin{pmatrix} 0 \\  1 \end{pmatrix}. \qedhere\] \end{proof}
 


%
%


\begin{remark}
\label{Listd}

The quadratic norm-Euclidean number rings have been completely classified. They are the rings of integers $\OK$ of $\KK=\Q(\sqrt d)$ with \[d\in \{-11,-7,-3,-2,-1, 2, 3, 5, 6, 7, 11, 13, 17, 19, 21, 29, 33, 37, 41, 57, 73\};\] see e.g. Stark \cite[Theorem 8.21]{Stark}. Ashrafi--V\'amos \cite[Theorem 7]{AshrafiVamos} completely characterized which quadratic number rings are generated by units. When $d\not\equiv 1 \pmod 4$ and $d>0$, $\OK$ is generated by units if and only if $d$ can be written as $d = a^2\pm 1$ for some $a\in \Z$. When $d\equiv 1 \pmod 4$ and $d>0$, the ring is generated by units if and only if $d$ can be written as $d=a^2 \pm 4$ for some $a\in \Z$. For $d<0$, the ring is generated by units if and only if $d \in \{-3,-1\}$. Thus, the norm-Euclidean number rings that are not generated by units are the rings of integers $\OK$ of $\KK=\Q(\sqrt d)$ with \[d\in  \{-11,-7,-2,6,7,11,17,19,33,37,41,57,73\}.\] On the other hand, there are Euclidean (but not norm-Euclidean) quadratic number rings that are not generated by units, such as the ring of integers in $\Q( \sqrt{69})$. Our results apply equally well to these rings. 

\end{remark}

We have just shown that it is not true that $BA_n(\OO)$ is spherical for all Euclidean domain. However, it is always highly connected.

\begin{proposition} \label{n2conn}
Let $\OO$ be a Euclidean domain. Then $BA_n^m(\OO)$ is $(n-2)$-connected.
\end{proposition}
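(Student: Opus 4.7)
The proof strategy is to analyze the relative simplicial chain complex $C_*(BA_n^m(\OO), B_n^m(\OO))$, deduce vanishing of low-degree homology of $BA_n^m(\OO)$, and invoke the Hurewicz theorem. The cases $n = 1$ and $n = 2$ are immediate: $BA_1^m(\OO)$ is non-empty (containing $e_{m+1}$ for $m \geq 1$, or the unique line of $\OO^1$ when $m = 0$), and $BA_2^m(\OO)$ shares its vertex set with the connected complex $B_2^m(\OO)$ by \autoref{thm:maazen}. Henceforth assume $n \geq 3$.

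For any augmented simplex $\tau$, there exists some $v_0 \in \tau$ such that $\tau \setminus \{v_0\}$ is a partial frame of rank $|\tau|-1$, so the $\OO$-module of linear relations among the spanning vectors of $\tau$ has rank one. The relation $\vec v_0 = u_1 \vec v_1 + u_2 \vec v_2$ guaranteed by the augmentation definition shows the primitive generator of this relation has exactly three nonzero entries, all units. Hence there is a uniquely determined three-element ``aug triple'' $T_\tau \subseteq \tau$ (the support of the primitive relation), independent of the choice of $v_0$. This lets me decompose the relative chain complex as $C_*(BA_n^m(\OO), B_n^m(\OO)) = \bigoplus_T C_*^T$, summed over aug triples $T$, where $C_*^T$ consists of augmented simplices whose aug triple is $T$. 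The assignment $\tau = T \cup \sigma \mapsto \sigma$ identifies $C_*^T$ (up to an overall sign) with the reduced simplicial chain complex of $\cL_T := \Link_{B_n^m(\OO)}(\{v,w\})$ shifted in degree by $+3$, where $\{v,w\}$ is any two-element subset of $T$ (the link is independent of this choice, since any two elements of $T$ span the same rank-$2$ submodule).

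By \autoref{thm:maazen}, $B_n^m(\OO)$ is Cohen--Macaulay of dimension $(n-1)$, so each $\cL_T$ is Cohen--Macaulay of dimension $(n-3)$; hence $\tilde H_q(\cL_T) = 0$ for $q \neq n-3$. Therefore $H_p(C_*^T) = \tilde H_{p-3}(\cL_T) = 0$ for $p < n$, and summing yields $H_p(BA_n^m(\OO), B_n^m(\OO)) = 0$ for $p < n$. The long exact sequence of the pair, combined with Hurewicz applied to the $(n-2)$-connected complex $B_n^m(\OO)$, then gives $H_i(BA_n^m(\OO)) = 0$ for $i \leq n-2$.

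Finally, since augmented simplices have dimension at least two, $BA_n^m(\OO)$ and $B_n^m(\OO)$ share their $1$-skeleton, so $\pi_1(BA_n^m(\OO))$ is a quotient of $\pi_1(B_n^m(\OO)) = 0$; hence $BA_n^m(\OO)$ is simply connected. The Hurewicz theorem then gives $\pi_i(BA_n^m(\OO)) \cong H_i(BA_n^m(\OO)) = 0$ for $i \leq n-2$, as required. The main technical input is the uniqueness of the aug triple $T_\tau$, which reduces the relative chain complex to a direct sum of shifted reduced chain complexes of Cohen--Macaulay links; the rest is a routine application of standard tools.
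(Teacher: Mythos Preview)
Your argument is correct for $m=0$ but has a genuine gap for $m \geq 1$: you have overlooked the \emph{externally additive} simplices of $BA_n^m(\OO)$ (see \autoref{DefnAdditive}). When $m \geq 1$, an additive simplex $\tau$ in $BA_n^m(\OO)$ corresponds to an augmented partial frame $\{e_1,\ldots,e_m\}\cup\tau$ in $\OO^{n+m}$, and the unique three-element ``aug triple'' lives in that larger set, not in $\tau$. If the relation is $\vec v_i = u_k \vec e_k + u_j \vec v_j$, then the triple is $\{e_k,v_i,v_j\}$ and only the pair $\{v_i,v_j\}$ lies in $\tau$. In particular, the vectors in $\tau$ itself are linearly \emph{independent}, so your rank-one relation module argument does not apply, and your decomposition of the relative chain complex by aug triples $T_\tau\subseteq\tau$ is incomplete.

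This has a concrete consequence: for $m\geq 1$ there exist externally additive $1$-simplices (e.g.\ the spans of $\vec e_{m+1}$ and $\vec e_1+\vec e_{m+1}$), so $BA_n^m(\OO)$ and $B_n^m(\OO)$ do \emph{not} share their $1$-skeleton, and your simple-connectivity step fails as written. Both issues are repairable: one can enlarge the decomposition to include a second family of summands $C_*^P$ indexed by externally additive pairs $P=\{v_i,v_j\}$, identified with the shifted reduced chain complex of $\Link_{BA_n^m(\OO)}(P)\cong B_{n-1}^{m+1}(\OO)$ (Cohen--Macaulay of dimension $n-2$), and one can recover simple connectivity by observing that each externally additive edge lies in a $2$-simplex whose other two edges are standard. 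But these fixes need to be carried out.

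For comparison, the paper's proof avoids this bookkeeping entirely: it shows directly that every map $S^i\to BA_n^m(\OO)$ with $i\leq n-1$ can be homotoped into $B_n^m(\OO)$, by pushing additive simplices (of either type) out of the image using the high connectivity of their links. This handles internal and external additivity uniformly. Your homological decomposition is a nice alternative viewpoint when $m=0$, and with the corrections above would give a different route to the general case.
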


\begin{proof}
Since \autoref{thm:maazen} says that $B_n^m(\OO)$ is $(n-2)$-connected, it suffices to show that $B_n^m(\OO) \m BA_n^m(\OO)$ induces a surjection on free homotopy classes $[S^i,-]$ for $ i\leq n-2$. We will in fact show that it is a surjection for $i \leq n-1$. Fix $i \leq n-1$ and let $\phi \colon S^i \m BA_n^m(\OO)$ be a simplicial map with respect to a combinatorial triangulation of $S^i$. Our goal is to show that $\phi$ is homotopic to a map to $B_n^m(\OO)$. Suppose that $\{v_0,v_1,v_2\}$ is an internally additive simplex in the image of $\phi$. Then the link
\[\Link_{BA^m_n(\OO)}(\{v_0,v_1,v_2\}) \cong \Link_{B_n^m(\OO)}(\{v_1,v_2\}).\]
is $(n-4)$-connected by \autoref{thm:maazen}. 
As in the proof of \cite[Lemma 2.4]{MPP} and the first step of the proof of \autoref{BAnOtherRings}, we can homotope the map $\phi$ to avoid the simplex $\{v_0,v_1,v_2\}$ without introducing new additive simplices to its image. Iterating this procedure, and the analogous procedure for externally additive simplices, produces a map to $B_n^m(\OO)$.
\end{proof}

\section{Presentations of Steinberg modules and vanishing of cohomology} 

In this section, we use our (non-)connectivity results to deduce the main theorems of the paper. We begin with a review of a useful tool: the spectral sequence associated to a map of posets, originally due to Quillen \cite{Quillen-Poset}. 

\subsection{The map of posets spectral sequence}
\label{Posection}
Let $\mathbf{Y}$ be a poset. Associated to $\mathbf{Y}$ is the simplicial complex $\Delta(\mathbf{Y})$ of non-degenerate simplices in the nerve of $\mathbf{Y}$. A $p$-simplex of $\Delta(\mathbf{Y})$ corresponds to a $(p+1)$-chain $y_0 < y_1 < \cdots < y_p$ of elements in $\mathbf{Y}$.  
The \emph{dimension} of $\mathbf{Y}$ is defined to be the dimension of $\Delta(\mathbf{Y})$, and we let $|\mathbf{Y}|$ denote the geometric realization of $\Delta(\mathbf{Y})$. We note that, if $Y$ is a simplicial complex and $\mathbf{Y}$ the corresponding poset of simplices under inclusion, then  $\Delta(\mathbf{Y})$ is the barycentric subdivision of $Y$, and there is a homeomorphism $Y \cong |\mathbf{Y}|$.

For an element $y \in \mathbf{Y}$, recall we defined the subposets,
\[\mathbf{Y}_{\leq y} \coloneqq \{ y' \in \mathbf{Y} \; | \; y' \leq y \} \qquad \text{ and } \qquad \mathbf{Y}_{> y} \coloneqq \{ y' \in \mathbf{Y} \; | \; y' > y \}.\]
 
  \begin{definition}  Let $\mathbf{Y}$ be a poset. Let $T$ be a functor from $\mathbf{Y}$ (viewed as a category) to the category $\underline{\mr{Ab}}$  of abelian groups. Define chain groups
  \[C_p(\mathbf{Y}; T) \coloneqq \bigoplus_{y_0<\cdots<y_p \in \mathbf{Y}} T(y_0)\]
with a differential $\sum_{i=0}^p (-1)^i d_i$, with the face maps $d_i$ given by
  \begin{align*}   d_i\colon \bigoplus_{y_0<\cdots<y_p } T(y_0) &\longrightarrow \bigoplus_{y_0<\cdots < \hat y_i< \cdots <y_p } T(y_0)  \qquad \qquad (1 \leq i \leq p) \\
   d_0\colon \bigoplus_{y_0<\cdots<y_p } T(y_0) &\longrightarrow \bigoplus_{y_1< \cdots <y_p } T(y_1), 
  \end{align*}
defined as follows. For $i\neq 0$, the map $d_i$ maps the summand indexed by $(y_0<\cdots<y_p)$ to the summand indexed by $(y_0<\cdots < \hat y_i< \cdots <y_p)$, and acts by the identity on the group $T(y_0)$. The map $d_0$ maps the summand indexed by $(y_0<\cdots<y_p)$ to the summand indexed by $(y_1< \cdots <y_p)$, and the map of abelian groups $T(y_0) \to T(y_1)$ is defined by applying $T$ to the morphism $y_0<y_1$  in $\mathbf{Y}$.    \end{definition}
   
If  $T=\Z$ is the constant functor with identity maps, then $H_*(\mathbf{Y}; \Z)$ is isomorphic to the usual homology $H_*(|\mathbf{Y}|)$. The following lemma is adapted from Charney \cite[Lemma 1.3]{Charney-Generalization}. See also \cite[Lemma 3.2]{MPWY}. 
Recall that the \emph{height} of  $y \in Y$ is by definition $\dim(\Delta(\mathbf{Y}_{\leq y}))$. 

   
\begin{lemma} \label{ChainsSingleSupport}    
Suppose that $T \colon \mathbf{Y} \to \underline{\mr{Ab}}$ is a functor that is nonzero only on elements of height $m$. Then 
   \[H_p(\mathbf{Y};T) = \bigoplus_{\mr{height}(y_0)=m} \widetilde{H}_{p-1}(|\mathbf{Y}_{>y_0}|; T(y_0)).\]
\end{lemma}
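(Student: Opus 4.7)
The plan is to show that, under the support hypothesis on $T$, the chain complex $C_*(\mathbf{Y}; T)$ decomposes as a direct sum (indexed by elements $y_0 \in \mathbf{Y}$ of height $m$) of the augmented chain complex of $|\mathbf{Y}_{>y_0}|$ with constant coefficients $T(y_0)$, shifted in degree by one. The key observations that make this work are that the support condition forces the bottom of every nontrivial chain to be an element of height exactly $m$, and that height is \emph{strictly} monotonic along chains in a poset.

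First I would verify strict monotonicity: if $y' < y$ in $\mathbf{Y}$, then any chain in $\mathbf{Y}_{\leq y'}$ can be extended by $y$, so $\mr{height}(y) \geq \mr{height}(y')+1$. Consequently, in any chain $y_0 < y_1 < \cdots < y_p$ contributing nontrivially to $C_p(\mathbf{Y};T)$, the support condition $T(y_0) \neq 0$ forces $\mr{height}(y_0) = m$, and then $\mr{height}(y_i) > m$ for all $i \geq 1$, so $T(y_i)=0$. This immediately implies that the face map $d_0$, which is the only face map that uses the functoriality of $T$ (by applying the morphism $y_0 < y_1$), vanishes identically on $C_p(\mathbf{Y};T)$ for $p \geq 1$.

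Next I would rewrite the chain complex by grouping summands according to the minimum $y_0$. For each $y_0$ of height $m$, the contribution to $C_p(\mathbf{Y};T)$ for $p \geq 1$ is
\[\bigoplus_{y_1 < \cdots < y_p \text{ in } \mathbf{Y}_{>y_0}} T(y_0),\]
which I identify with $C_{p-1}(\mathbf{Y}_{>y_0};\Z) \otimes T(y_0) = C_{p-1}(|\mathbf{Y}_{>y_0}|; T(y_0))$. Under this identification, the remaining face maps $d_1, \ldots, d_p$ on $C_p(\mathbf{Y};T)$ correspond (after reindexing $z_i = y_{i+1}$) to the face maps $d_0, \ldots, d_{p-1}$ of $C_{p-1}(|\mathbf{Y}_{>y_0}|; T(y_0))$, and the alternating sign $\sum_{i=1}^p (-1)^i d_i$ agrees with the standard simplicial differential up to an overall sign. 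For $p=0$ the group $C_0(\mathbf{Y};T) = \bigoplus_{\mr{height}(y_0)=m} T(y_0)$ plays the role of the augmentation target, and the boundary $C_1 \to C_0$ (which equals $-d_1$ after killing $d_0$) is precisely the augmentation map $C_0(|\mathbf{Y}_{>y_0}|; T(y_0)) \to T(y_0)$ up to sign.

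Thus $C_*(\mathbf{Y};T)$ splits as a direct sum over $\{y_0 : \mr{height}(y_0)=m\}$ of the augmented chain complex of $|\mathbf{Y}_{>y_0}|$ with coefficients in $T(y_0)$, shifted up by one degree. Taking homology yields
\[H_p(\mathbf{Y};T) = \bigoplus_{\mr{height}(y_0)=m} \widetilde{H}_{p-1}(|\mathbf{Y}_{>y_0}|; T(y_0)),\]
as claimed. The only real subtlety, and thus the main thing to state carefully, is the bookkeeping of signs and indices that identifies the restricted differential with the augmented simplicial boundary on $|\mathbf{Y}_{>y_0}|$; everything else is a direct consequence of the strict monotonicity of height together with the support hypothesis on $T$.
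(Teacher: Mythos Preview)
Your proof is correct, and in fact the paper does not prove this lemma at all: it simply attributes it to Charney \cite[Lemma 1.3]{Charney-Generalization} and \cite[Lemma 3.2]{MPWY}. Your direct chain-level argument---using strict monotonicity of height to kill $d_0$, then grouping summands by their minimal element $y_0$ to identify the remaining complex with the shifted augmented chain complex of $\mathbf{Y}_{>y_0}$---is exactly the standard way to see this, and is essentially how the cited references argue. The sign and index bookkeeping you flag is handled correctly.
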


\begin{definition}  Let $f\colon \mathbf{X} \to \mathbf{Y}$ be a map of posets.  For $y \in \mathbf{Y}$, define $f\backslash y$ to be the subposet of $\mathbf{X}$ 	
	\[f\backslash y\coloneqq \{ x \in \mathbf{X} \; | \; f(x) \leq y \}.\]
\end{definition}

Consider a map of posets  $f\colon \mathbf{X} \to \mathbf{Y}$, and fix a degree $q \in \Z_{\geq 0}$. Then there is a functor from the poset  $\mathbf{Y}$ to $ \underline{\mr{Ab}}$ that takes an object $y \in \mathbf{Y}$ to the abelian group $H_q(f \backslash y)$. With this functor, we may state the following theorem. The spectral sequence associated to a map $f\colon \mathbf{X} \to \mathbf{Y}$ of posets was introduced by Quillen \cite[Section 7]{Quillen-Poset}; see also Charney \cite[Section 1]{Charney-Generalization}.

\begin{theorem}[Quillen \cite{Quillen-Poset}] \label{MPSS} Let $f\colon \mathbf{X} \to \mathbf{Y}$ be a map of posets. There is a strongly convergent spectral sequence
	\[ E^2_{p,q} = H_p\Big(\mathbf{Y};  [y \mapsto H_q(f \backslash y)]\Big) \implies H_{p+q}(\mathbf{X}).  \]
\end{theorem}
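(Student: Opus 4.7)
The plan is to prove this by constructing a double complex whose total homology can be computed via two different spectral sequences, one giving the stated $E^2$-page and the other identifying the abutment. Define a bigraded abelian group $C_{\bullet,\bullet}$ where $C_{p,q}$ is the free abelian group on pairs of chains
\[ (y_0 < y_1 < \cdots < y_p \text{ in } \mathbf{Y};\ x_0 < x_1 < \cdots < x_q \text{ in } \mathbf{X}) \]
subject to the constraint $f(x_q) \leq y_0$. Equip it with a horizontal differential $d^h = \sum_{i=0}^p (-1)^i d_i^h$ acting on the $y$-chain by the usual simplicial face maps (note $d_0^h$ drops $y_0$ but the constraint $f(x_q) \leq y_1$ is automatic from $y_0 < y_1$) and a vertical differential $d^v$ acting on the $x$-chain inside the subposet $f\backslash y_0$. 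The two differentials commute up to sign, so we obtain a first-quadrant double complex.

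First, filter by columns (by $p$). Taking vertical homology gives
\[ E^1_{p,q} = \bigoplus_{y_0<\cdots<y_p} H_q(f\backslash y_0), \]
which is precisely the chain group $C_p(\mathbf{Y}; [y \mapsto H_q(f\backslash y)])$ defined earlier, and the induced horizontal $d^1$-differential matches the face maps in that definition (since $d_0$ is induced by the inclusion $f\backslash y_0 \hookrightarrow f\backslash y_1$, which is exactly the functoriality used in the coefficient system). Taking homology again yields $E^2_{p,q} = H_p(\mathbf{Y}; [y \mapsto H_q(f\backslash y)])$, the desired $E^2$-page.

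Next, filter by rows (by $q$) to identify the target. For a fixed chain $x_0<\cdots<x_q$, the $p$-row consists of chains $y_0<\cdots<y_p$ with $y_0 \geq f(x_q)$, which are exactly the (unreduced) simplicial chains on the nerve of the subposet $\mathbf{Y}_{\geq f(x_q)}$. This poset has the minimum element $f(x_q)$, hence its nerve is a cone and contractible, so its homology vanishes above degree zero and equals $\bZ$ in degree zero. Thus this spectral sequence has $E^1_{0,q} = C_q(\mathbf{X})$ and $E^1_{p,q}=0$ for $p>0$, with the remaining differential being the simplicial boundary of $\mathbf{X}$. It collapses at $E^2$ to $H_q(\mathbf{X})$ concentrated on the axis $p=0$, so the total complex computes $H_*(\mathbf{X})$, and the first spectral sequence converges to $H_{p+q}(\mathbf{X})$ as claimed.

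Since the double complex lives in the first quadrant, both spectral sequences automatically converge strongly, so no additional bookkeeping is required. The only point one has to be careful about is matching the chosen face-map conventions on the $\mathbf{Y}$-side of $C_{p,q}$ with those used in the definition of $C_p(\mathbf{Y};T)$, and verifying that the $d_0^h$-differential genuinely corresponds to the covariant functoriality $T(y_0) \to T(y_1)$; this is the only real computation and follows directly from inspecting the inclusion $f\backslash y_0 \subseteq f\backslash y_1$ on chain level.
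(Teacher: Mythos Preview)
The paper does not give its own proof of this statement: it is quoted as a result of Quillen, with a reference to \cite[Section 7]{Quillen-Poset} and Charney \cite[Section 1]{Charney-Generalization}. Your double complex argument is correct and is essentially the standard proof one finds in those references. The only minor quibble is that since $d^h$ acts purely on the $\mathbf{Y}$-chain and $d^v$ purely on the $\mathbf{X}$-chain, they commute on the nose rather than ``up to sign''; one then inserts the usual sign twist to form the total complex, but this is routine and does not affect anything.
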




\subsection{Generalized Bykovski\u\i \, presentations for the Gaussian integers and Eisenstein integers}
In this subsection, we let $\OK$ denote Gaussian integers or the Eisenstein integers and $\KK$ its field of fractions. Our objective is to  prove \autoref{Vanishing} and \autoref{BforGandE}. Recall that \autoref{BforGandE} is the statement that $\mr{Byk}_n(\OK) \m \St_n(\KK)$ is an isomorphism for all $n$. 
We can deduce  \autoref{BforGandE}  from \autoref{BAnOtherRings} using the same arguments that Church--Putman use to deduce \cite[Theorem B]{CP} from \cite[Theorem C]{CP}. We recall these arguments in the three lemmas below and the proof of \autoref{BforGandE}. 

We make the following definition, as in Church--Putman \cite[Proof of Theorem B]{CP}. 

\begin{definition} \label{BA'} For a Euclidean ring $\OO$, we let $BA_n(\OO)'$ denote the subcomplex of $BA_n(\OO)$ consisting of simplices $\{v_0, v_1, \ldots, v_p\}$ with $v_0 + v_1 + \cdots + v_p \subsetneq \OO^n$.
\end{definition}

Let $\mr{sd}$ denote the barycentric subdivision. This subcomplex is defined to give a map 
\begin{align*} \mr{span} \colon \mr{sd}(BA_n(\OO)') &\longrightarrow \cT_n(\KK) \\
\{v_0,v_1, \ldots, v_p\} & \longmapsto \KK v_0 + \KK v_1 + \cdots + \KK v_p
\end{align*}
This arises from a map of posets with domain $\cat{simp}(BA_n(\OO)')$ and target the poset defining the Tits building.

\begin{lemma}[Following  {\cite[Theorem B Step 3]{CP}}] \label{MPSSisos}  Let $\OK$ be the Gaussian integers or Eisenstein integers.  The map $\mr{span} \colon \mr{sd}(BA_n(\OK)') \to \cT_n(\KK)$ induces an isomorphism of $\bZ[\GL_n(\OK)]$-modules
$\widetilde H_{n-2}(BA_n(\OK)') \xrightarrow{\cong} \widetilde H_{n-2}(\cT_n(\KK))$.
\end{lemma}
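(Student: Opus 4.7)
The plan is to compute $\widetilde H_{n-2}(BA_n(\OK)')$ using Quillen's spectral sequence (\autoref{MPSS}) applied to the map of posets
\[f \colon \cat{simp}(BA_n(\OK)') \lra \mathbf{Y}\]
induced by $\mr{span}$, where $\mathbf{Y}$ is the poset of proper nonempty subspaces of $\KK^n$ (so $|\mathbf{Y}| = \cT_n(\KK)$ and $|\cat{simp}(BA_n(\OK)')| = \mr{sd}(BA_n(\OK)') \cong BA_n(\OK)'$). This spectral sequence converges to $H_{p+q}(BA_n(\OK)')$ with
\[E^2_{p,q} = H_p\Big(\mathbf{Y}; \big[V \mapsto H_q(f\backslash V)\big]\Big),\]
its edge map on total degree $n-2$ is $\mr{span}_*$, and all of the structure is natural for the action of $\GL_n(\OK)$.

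First I would identify the fibers. For a proper subspace $V \subsetneq \KK^n$ of dimension $k$, the intersection $V \cap \OK^n$ is a rank-$k$ direct summand of $\OK^n$; a line in $\OK^n$ contained in $V \cap \OK^n$ is the same as a line in $V \cap \OK^n$, and such a collection of lines is an (augmented) partial frame in $\OK^n$ if and only if it is one in $V \cap \OK^n$. The condition $v_0+\cdots+v_p \subsetneq \OK^n$ defining $BA_n(\OK)'$ is automatic since the span lies in $V \cap \OK^n \subsetneq \OK^n$. Hence $f \backslash V \cong \cat{simp}(BA_k(\OK))$, which by \autoref{BAnOtherRings} has geometric realization $BA_k(\OK)$ that is Cohen--Macaulay of dimension $k$ for $k \geq 2$, and is a point for $k=1$.

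Next I would show the spectral sequence collapses on the $(n-2)$-diagonal. For $q=0$ the coefficient system is constant $\bZ$ and $E^2_{p,0} = H_p(\cT_n(\KK))$ is concentrated in degrees $0$ and $n-2$ by Solomon--Tits (\autoref{SolomonTits}). For $q=1$, every fiber is $(k-1)$-connected with $k \geq 1$, so $H_1(f \backslash V) = 0$ for all $V$ and $E^2_{p,1}=0$. For $q \geq 2$ the coefficient system $V \mapsto H_q(f\backslash V)$ is supported on subspaces of dimension exactly $q$, i.e., on elements of height $q-1$ in $\mathbf{Y}$, so \autoref{ChainsSingleSupport} gives
\[E^2_{p,q} \cong \bigoplus_{\dim V = q} \widetilde H_{p-1}(|\mathbf{Y}_{>V}|; H_q(f\backslash V)).\]
Since $|\mathbf{Y}_{>V}| \cong \cT_{n-q}(\KK)$ is $(n-q-2)$-spherical, this vanishes except when $p=n-q-1$; in particular it vanishes when $p+q=n-2$. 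On the $(n-2)$-diagonal only $E^2_{n-2,0} = \widetilde H_{n-2}(\cT_n(\KK))$ survives, and a direct degree count shows that no differential $d_r$ for $r \geq 2$ can have $(n-2,0)$ as source or target. Hence $E^\infty_{n-2,0} = \widetilde H_{n-2}(\cT_n(\KK))$ and the edge map $\mr{span}_*$ is the desired $\GL_n(\OK)$-equivariant isomorphism.

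The main obstacle is the clean identification of the fibers $f \backslash V$ with $BA_k(\OK)$: one must verify that passing to a rank-$k$ summand does not change the combinatorics of (augmented) partial frames, and that the ``proper-span'' condition defining $BA_n(\OK)'$ is automatic on each fiber so that \autoref{BAnOtherRings} applies. Once this is in hand, the remainder is a direct adaptation of the spectral sequence argument used by Church--Putman in the proof of \cite[Theorem B]{CP}.
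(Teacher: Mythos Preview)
Your proposal is correct and follows essentially the same approach as the paper: apply Quillen's spectral sequence to $\mr{span}$, identify $f\backslash V \cong BA_{\dim V}(\OK)$, use \autoref{BAnOtherRings} together with \autoref{ChainsSingleSupport} and Solomon--Tits to see that the only nonzero $E^2$-term on the diagonal $p+q=n-2$ is $E^2_{n-2,0}\cong \widetilde H_{n-2}(\cT_n(\KK))$, and check that no higher differentials touch it. The paper's write-up differs only cosmetically (it treats all $q>0$ uniformly rather than singling out $q=1$, and it disposes of $n=1$ in a sentence).
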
 

We could prove this by quoting Church--Putman \cite[Proposition 2.3]{CP}, but will instead prove it using the spectral sequence of \autoref{MPSS}, as a warm-up for our proof of \autoref{NoB}. 

\begin{proof}
When $n=1$, both $BA_n(\OK)'$ and $\cT_n(\KK)$ are empty, so we may assume $n\geq 2$. We consider the spectral sequence of \autoref{MPSS} associated to the functor $\mr{span}$. Observe that, given a proper nonzero subspace $V \subsetneq \KK^n$, 
\[\mr{span}\backslash V  = \Big\{ \{v_0, v_1, \ldots, v_p\} \in \cat{simp}(BA_n(\OK)') \; \Big| \; \KK v_0 + \KK v_1 + \cdots + \KK v_p \subseteq V\Big\} \cong BA(V\cap \OK^n).\]
By \autoref{BAnOtherRings} the complex $BA(V\cap \OK^n)$ is Cohen--Macaulay of dimension $\dim(V)$ (for $\dim(V) \geq 2$) or dimension $0$ (when $\dim(V)=1$), so $H_q(\mr{span}\backslash V ) = 0$ except possibly when $q=0$ or  $q=\dim(V)$. We can identify $\cT_n(\KK)_{>V} \cong \cT(\KK^n/V)$. Thus for $q>0$, we find by \autoref{ChainsSingleSupport} that
\begin{align*} E^2_{p,q} &\cong H_p\Big(\cT_n(\KK); [V \mapsto H_q(\mr{span}\backslash V )]\Big) \\ &\cong \bigoplus_{V \subseteq \KK^n, \, \dim(V)=q} \widetilde{H}_{p-1}\Big( \cT(\KK^n/V); H_{\dim(V)}(BA(V\cap \OK^n))\Big).
\end{align*}
The building $\cT(\KK^n/V)$ is spherical of dimension $\dim(\KK^n/V)-2$, and so for $q>0$, we conclude that $E^2_{p,q}$ vanishes unless $p-1 = n - \dim(V)-2$, equivalently, unless $p+q = n-1$. When $q=0$,
\[E^2_{p,q} \cong H_p(\cT_n(\KK))=0   \qquad \text{ except when $p=0$ or  $p=n-2$}.\]

 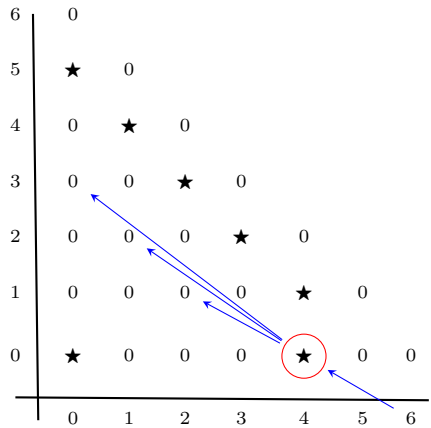
\begin{figure}[h!]    \hspace{-0cm} \hspace{-1.2cm} 
\begin{center}  \begin{tikzpicture} \scriptsize
  \matrix (m) [matrix of math nodes,
    nodes in empty cells,nodes={minimum width=3ex,
    minimum height=3ex,outer sep=2pt},
 column sep=3ex,row sep=3ex]{  
  6    & 0 &  &  & & &  &  \\   
  5    &\bigstar & 0 && &  &  &   \\ 
 4    & 0 & \bigstar & 0 &  &  &   &  \\   
 3    & 0& 0 & \bigstar & 0 & &   & \\  
 2   & 0 & 0 & 0 & \bigstar & 0 &  &  \\            
1     & 0 & 0 & 0 & 0& \bigstar & 0 &  \\        
 0     & \bigstar & 0 & 0 & 0&|[draw=red, circle]|\bigstar& 0& 0  \\       
 \quad\strut &     0  &  1  & 2  & 3 & 4 &5 &6  \\ &&&&&&&& \\}; 
 \draw[thick] (m-1-1.east) -- (m-8-1.east) ;
 \draw[thick] (m-8-1.north) -- (m-8-8.north east) ;
 
  \draw[-stealth, blue]  (m-7-6) -- (m-6-4);
  \draw[-stealth, blue]  (m-7-6) -- (m-5-3);
    \draw[-stealth, blue]  (m-7-6) -- (m-4-2);
  \draw[-stealth, blue]  (m-8-8) -- (m-7-6);

\end{tikzpicture} \vspace{-2em} 
\end{center}
\caption{ The page $E^2_{p,q}$ when $n=6$. There are no nonzero differentials to or from the term $E^r_{n-2,0}$ for $r\geq 2$.}
\label{FigureE2}
\end{figure}

 The spectral sequence (see \autoref{FigureE2}) converges to $H_{p+q}(BA_n(\OK)')$. The only nonzero $E^2$ term on the diagonal $p+q=n-2$ is the term $E^2_{n-2,0} \cong H_{n-2}(\cT_n(\KK))$, and  this term admits no non-zero incoming or outgoing higher differentials. This gives the desired isomorphism.
\end{proof} 

\begin{lemma}[Following  {\cite[Theorem B Step 1]{CP}}]  \label{BykReliso} Let $\OO$ be an integral domain, then there is an isomorphism of $\bZ[\GL_n(\OO)]$-modules
\[\mr{Byk}_n(\OO) \overset{\cong}\lra H_{n-1}(BA_n(\OO),BA_n(\OO)').\]
\end{lemma}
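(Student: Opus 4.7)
The plan is to identify $\mr{Byk}_n(\OO)$ with the cokernel of the top relative boundary map in the chain complex computing $H_\ast(BA_n(\OO),BA_n(\OO)')$. The crucial observations about dimensions are: (i) $BA_n(\OO)$ has dimension $n$, and its top simplices are augmented frames of size $n+1$, each of which spans $\OO^n$ and so is not in $BA_n(\OO)'$; (ii) the $(n-1)$-simplices in $BA_n(\OO)\setminus BA_n(\OO)'$ are precisely the (non-augmented) frames of size $n$, since any augmented partial frame on $n$ lines spans only an $(n-1)$-dimensional summand; (iii) every $(n-2)$-simplex involves only $n-1$ lines and so lies in $BA_n(\OO)'$, whence $C_{n-2}(BA_n(\OO),BA_n(\OO)')=0$. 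Consequently $H_{n-1}(BA_n(\OO),BA_n(\OO)')$ is just the cokernel of the relative $\partial_n$.

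First I would identify $C_{n-1}(BA_n(\OO),BA_n(\OO)')$ with the free abelian group on symbols $[[\vec v_1,\ldots,\vec v_n]]$ modulo relations (1) and (2) defining $\mr{Byk}_n(\OO)$. Using the ordered-simplex model of simplicial chains, this relative chain group is generated by ordered tuples $(v_1,\ldots,v_n)$ of lines forming a frame, subject to the sign-of-permutation relation, which matches (1). Relation (2) just records that the Bykovski\u\i\ symbol depends only on the lines $v_i$, not on chosen primitive representatives, since units are the ambiguity in that choice. Thus the assignment $[[\vec v_1,\ldots,\vec v_n]]\mapsto (v_1,\ldots,v_n)$ descends to an isomorphism of $\bZ[\GL_n(\OO)]$-modules.

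Next I would compute $\partial_n$ on a representative top simplex. After reordering and rescaling by (1) and (2), any augmented frame of size $n+1$ can be written in the form $(v_0,v_1,v_2,\ldots,v_n)$ with $\vec v_0=\vec v_1+\vec v_2$. Of the $n+1$ faces, deleting $v_i$ for $i\ge 3$ produces a simplex whose $n$ lines still satisfy $\vec v_0=\vec v_1+\vec v_2$, hence span only an $(n-1)$-dimensional subspace and lie in $BA_n(\OO)'$; these vanish in the relative chain complex. The three surviving faces obtained by deleting $v_0$, $v_1$, or $v_2$ are genuine frames, and the signed sum $\partial_n=\sum_i(-1)^i d_i$ yields
\[ [[\vec v_1,\vec v_2,\vec v_3,\ldots,\vec v_n]]-[[\vec v_1+\vec v_2,\vec v_2,\vec v_3,\ldots,\vec v_n]]+[[\vec v_1+\vec v_2,\vec v_1,\vec v_3,\ldots,\vec v_n]], \]
which is precisely the left-hand side of relation (3). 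Hence the image of $\partial_n$ is the submodule generated by all instances of (3), and the cokernel is exactly $\mr{Byk}_n(\OO)$. The $\GL_n(\OO)$-equivariance is immediate since both actions are by linear change of basis.

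There is no substantial obstacle: the proof is a careful bookkeeping of which faces of an augmented $n$-simplex survive into the relative chain complex and which relations those boundaries impose. The only point requiring care is tracking signs between the ordered-simplex boundary and the symmetric-group convention built into (1), but the normalization $\vec v_0=\vec v_1+\vec v_2$ makes the three surviving terms line up verbatim with (3). This is the same argument used by Church--Putman for $\OO=\bZ$, and nothing in it depends on the ring being $\bZ$; we simply note that it applies to an arbitrary integral domain.
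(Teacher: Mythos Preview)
Your proposal is correct and follows essentially the same argument as the paper: identify which simplices of $BA_n(\OO)$ lie outside $BA_n(\OO)'$, note that $C_{n-2}$ of the pair vanishes so $H_{n-1}$ is the cokernel of $\partial_n$, and then read off that $\partial_n$ imposes exactly relation~(3). Your write-up is in fact slightly more careful than the paper's (you make the vanishing of $C_{n-2}$ explicit and handle the ordered/unordered bookkeeping for relations (1)--(2)), but the strategy is identical.
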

\begin{proof} By construction, $BA_n(\OO)'$ contains all simplices of $BA_n(\OO)$ except for the $n$-simplices corresponding to augmented $n$-frames, and the $(n-1)$-simplices corresponding to non-augmented $n$-frames. From the exact sequence
\[C_n(BA_n(\OO),BA_n(\OO)') \overset{\delta}\longrightarrow C_{n-1}(BA_n(\OO),BA_n(\OO)') \longrightarrow H_{n-1}(BA_n(\OO),BA_n(\OO)') \longrightarrow 0\]
we see that $H_{n-1}(BA_n(\OO),BA_n(\OO)')$ is the group generated by the simplices
\[\{ \{v_1, \ldots, v_{n}\} \; | \;  v_1 \oplus \cdots \oplus v_{n} = \OO^n \}\]
modulo relations of the form 
\begin{align*}
& \delta\big(\{v_0, v_1, v_2, v_3 \ldots, v_{n-1}\}\big)   \qquad  \text{with }\vec{v_0} = \vec{v_1}+\vec{v_2}  \\ 
&= \{v_1, v_2, v_3 \ldots, v_{n-1}\}- \{v_0, v_2, v_3 \ldots, v_{n-1}\} +  \{v_0, v_1, v_3 \ldots, v_{n-1}\} - 0 + 0- \cdots \pm 0.
\end{align*}
This is precisely the presentation defining the group $\mr{Byk}_n(\OO)$. 
\end{proof} 

\begin{lemma}[Following  {\cite[Theorem B Step 2]{CP}}]  \label{LESisos} Let $\OK$ be the Gaussian integers or Eisenstein integers. There is an isomorphism of $\bZ[\GL_n(\OK)]$-modules
\[H_{n-1}(BA_n(\OK),BA_n(\OK)') \overset{\cong}{\longrightarrow} \widetilde H_{n-2}(BA_n(\OK)').\]
\end{lemma}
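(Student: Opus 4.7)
The plan is to derive this isomorphism directly from the long exact sequence of the pair $(BA_n(\OK), BA_n(\OK)')$. The relevant portion reads
\[\widetilde H_{n-1}(BA_n(\OK)) \lra H_{n-1}(BA_n(\OK),BA_n(\OK)') \lra \widetilde H_{n-2}(BA_n(\OK)') \lra \widetilde H_{n-2}(BA_n(\OK)),\]
and the connecting map is $\bZ[\GL_n(\OK)]$-equivariant, as the pair is $\GL_n(\OK)$-invariant. So it suffices to show that the two outer terms vanish.

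This is exactly what \autoref{BAnOtherRings} gives us. Taking $m=0$, that theorem asserts that $BA_n(\OK)$ is Cohen--Macaulay of dimension $n$ whenever $n \geq 2$; in particular it is $(n-1)$-connected, so $\widetilde H_{n-1}(BA_n(\OK))$ and $\widetilde H_{n-2}(BA_n(\OK))$ both vanish. (The edge case $n=1$ needs to be handled separately, but there $BA_n(\OK)' = \varnothing$ and $BA_n(\OK)$ is a point, so both sides are zero and the statement is trivial.) Hence the connecting homomorphism furnishes the required isomorphism of $\bZ[\GL_n(\OK)]$-modules.

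There is no real obstacle: all the work has been done in establishing \autoref{BAnOtherRings}. The only minor point to be careful about is that the statement of \autoref{BAnOtherRings} requires $n+m \geq 2$, which in the $m=0$ case means $n \geq 2$; hence the separate (trivial) remark about $n=1$.
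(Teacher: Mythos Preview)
Your proof is correct and follows the same approach as the paper: invoke \autoref{BAnOtherRings} to see that $BA_n(\OK)$ is $(n-1)$-connected, then read off the isomorphism from the connecting homomorphism in the long exact sequence of the pair. Your explicit remarks on equivariance and the trivial $n=1$ case are minor elaborations the paper omits.
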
 
\begin{proof} By \autoref{BAnOtherRings}, $BA_n(\OK)$ is $(n-1)$--connected. Thus an isomorphism is given by the connecting homomorphism in the long exact sequence of the pair $(BA_n(\OK),BA_n(\OK)')$. \end{proof} 

Recall that \autoref{BforGandE} says that when $\OK$ is the Gaussian integers or Eisenstein integers, the generalized Bykovski\u\i\, presentation holds.

\begin{proof}[Proof of \autoref{BforGandE}]
Consider the maps \[\mr{Byk}_n(\OK) \xrightarrow[(*)]{\cong}  H_{n-1}(BA_n(\OK),BA_n(\OK)') \xrightarrow[(**)]{\cong}  \widetilde H_{n-2}(BA_n(\OK)') \xrightarrow[(***)]{\cong}  \widetilde H_{n-2}(\cT_n(\KK)),\]
the rightmost group being $\St_n(\KK)$ by definition. The map $(*)$ is an isomorphism by \autoref{BykReliso}, the map $(**)$ is an isomorphism by \autoref{LESisos}, and  the map $(*\! *\! *)$ is an isomorphism by \autoref{MPSSisos}. Thus the composite is an isomorphism of $\bZ[\GL_n(\OK)]$-modules. As in the proof of Church--Putman \cite[Theorem B]{CP}, this composite is the map described in \autoref{SecDualizing}. 
\end{proof}

\begin{corollary} \label{StnRes} Let $\OK$ be the Gaussian integers or Eisenstein integers, and $\KK$ its field of fractions. Then $\St_n(\KK)$ admits a partial resolution by $\Z[\GL_n(\OK)]$-modules
\[C_n(BA_n(\OK),BA_n(\OK)') \overset{\delta}\longrightarrow C_{n-1}(BA_n(\OK),BA_n(\OK)') \longrightarrow 
\St_n(\KK) \longrightarrow 0.\]
\end{corollary}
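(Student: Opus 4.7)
The plan is to derive this as an immediate consequence of the chain of isomorphisms already assembled in the proof of \autoref{BforGandE}, combined with the standard right-exact sequence of relative chains on a finite-dimensional simplicial complex.

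First, I would observe that for any pair $(X,A)$ of simplicial complexes where $X$ has dimension $n$, the relative chain complex has no chains above degree $n$, so there is a right exact sequence
\[C_n(X,A) \overset{\delta}\longrightarrow C_{n-1}(X,A) \longrightarrow H_{n-1}(X,A) \longrightarrow 0.\]
Specializing to $(X,A) = (BA_n(\OK),BA_n(\OK)')$, and noting that the $\GL_n(\OK)$-action on $BA_n(\OK)$ preserves the subcomplex $BA_n(\OK)'$ (since the condition $v_0 + \cdots + v_p \subsetneq \OK^n$ defining \autoref{BA'} is $\GL_n(\OK)$-invariant), this right-exact sequence lives in the category of $\Z[\GL_n(\OK)]$-modules.

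Next I would invoke \autoref{LESisos} to identify $H_{n-1}(BA_n(\OK),BA_n(\OK)')$ with $\widetilde H_{n-2}(BA_n(\OK)')$ via the connecting homomorphism of the long exact sequence of the pair, which is an isomorphism of $\Z[\GL_n(\OK)]$-modules thanks to the $(n-1)$-connectivity of $BA_n(\OK)$ established in \autoref{BAnOtherRings}. Then \autoref{MPSSisos} identifies this group with $\widetilde H_{n-2}(\cT_n(\KK)) = \St_n(\KK)$ via the equivariant span map. Composing the surjection in the right-exact sequence above with these two isomorphisms yields the desired partial resolution.

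There is no real obstacle to overcome, since every ingredient has already been proved: the key input is that $BA_n(\OK)$ is $(n-1)$-connected, which is exactly \autoref{BAnOtherRings} for the Gaussian and Eisenstein integers, and the identification of $\widetilde H_{n-2}(BA_n(\OK)')$ with $\St_n(\KK)$ is the content of \autoref{MPSSisos}. The corollary then records the presentation that is implicit in these identifications by naming the first two terms of the resolution explicitly.
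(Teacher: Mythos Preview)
Your argument is correct and is essentially the paper's proof, just unpacked slightly differently: the paper cites \autoref{BforGandE} together with the right-exact sequence from the proof of \autoref{BykReliso}, whereas you go directly from $H_{n-1}(BA_n(\OK),BA_n(\OK)')$ to $\St_n(\KK)$ via \autoref{LESisos} and \autoref{MPSSisos}, which are exactly the isomorphisms composed in the proof of \autoref{BforGandE}.
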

\begin{proof} By \autoref{BforGandE}, there is an isomorphism $\mr{Byk}_n(\OK) \cong \St_n(\KK)$, and so the result follows from the proof of \autoref{BykReliso}. 
\end{proof} 

We will now use our resolution to show vanishing for group homology with coefficient in the Steinberg module.

\begin{theorem} \label{Sthomology} 
Let $\OK$ denote the  Gaussian integers or Eisenstein integers and let $\bk$ be a ring with $n!$ and $3$ invertible.  Then \begin{align*}
H_1(\GL_n(\OK);\St_n(\KK)\otimes \bk) &= 0 \qquad \text{ for $n \geq 2$}, \\
H_1(\SL_n(\OK);\St_n(\KK)\otimes \bk) &= 0 \qquad \text{ for $n \geq 3$}. 
\end{align*} 
\end{theorem}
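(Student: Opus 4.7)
The plan is to prove both vanishing statements by directly computing the abelianizations of the groups in question. By the universal coefficient theorem, since $H_0(G;\Z)=\Z$ is free, one has $H_1(G;\bk)\cong G^{\mathrm{ab}}\otimes_\Z\bk$, so the task reduces to showing that $G^{\mathrm{ab}}\otimes_\Z\bk=0$. Note that for $n\geq 2$ the hypothesis $n!\in\bk^\times$ already implies $2\in\bk^\times$, and $3\in\bk^\times$ is given.

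For $\SL_n(\OK)$ with $n\geq 3$, I would argue as follows. Since $\OK$ is Euclidean, $\SL_n(\OK)$ is generated by elementary transvections $e_{ij}(a)$, $a\in\OK$. For $n\geq 3$, the Steinberg commutator identity $e_{ij}(a)=[e_{ik}(a),e_{kj}(1)]$ (for any distinct triple $i,j,k$) shows that each generator is a commutator. Hence $\SL_n(\OK)$ is perfect, $\SL_n(\OK)^{\mathrm{ab}}=0$, and $H_1(\SL_n(\OK);\bk)=0$. The analogous argument for $\GL_n(\OK)$ with $n\geq 3$ uses the determinant short exact sequence $1\to\SL_n(\OK)\to\GL_n(\OK)\xrightarrow{\det}\OK^\times\to 1$. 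Perfectness of $\SL_n(\OK)$ together with abelianness of $\OK^\times$ forces $[\GL_n(\OK),\GL_n(\OK)]=\SL_n(\OK)$, giving $\GL_n(\OK)^{\mathrm{ab}}\cong\OK^\times$. For Gaussian integers $\OK^\times\cong\Z/4$ and for Eisenstein $\OK^\times\cong\Z/6$; both are killed upon tensoring with $\bk$ since $2$ and $3$ are invertible in $\bk$.

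The main obstacle is the case $\GL_2(\OK)$, where the Steinberg commutator identity fails because there is no third index, and $\SL_2$ of a general Euclidean domain need not be perfect. Here my plan is to appeal to Swan's computations \emph{(Generators and relations for certain special linear groups, Adv.\ Math.\ 1971)} of presentations of the Bianchi groups $\PSL_2(\Z[i])$ and $\PSL_2(\Z[\rho])$, from which one extracts that $\PSL_2(\Z[i])^{\mathrm{ab}}$ is a finite $2$-group and $\PSL_2(\Z[\rho])^{\mathrm{ab}}$ is a finite $\{2,3\}$-group. Lifting along the central $\{\pm I\}$-extension $1\to\{\pm I\}\to\SL_2(\OK)\to\PSL_2(\OK)\to 1$ only adds at most a $\Z/2$-factor, so $\SL_2(\OK)^{\mathrm{ab}}$ remains a finite $\{2,3\}$-group. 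Pushing through the determinant extension as before, $\GL_2(\OK)^{\mathrm{ab}}$ sits in a short exact sequence with abelian ends whose orders involve only $2$ and $3$, so it is itself a finite $\{2,3\}$-group. Tensoring with $\bk$ then annihilates it under the hypothesis that $2$ and $3$ are invertible, completing the $n=2$ case of $\GL_n$. The hardest part is verifying that the Bianchi-group abelianization calculations indeed yield only $2$- and $3$-torsion; this is the step that will require the most careful bookkeeping and is really where the specific choice of $\OK$ (Gaussian or Eisenstein) enters essentially.
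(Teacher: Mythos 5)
Your proposal addresses a different statement from the one the paper is actually proving. The displayed theorem as written omits the coefficient module by what appears to be a typo: the intended (and proved) assertion is
\[
H_1(\GL_n(\OK);\St_n(\KK)\otimes\bk)=0 \text{ for } n\ge 2, \qquad
H_1(\SL_n(\OK);\St_n(\KK)\otimes\bk)=0 \text{ for } n\ge 3,
\]
i.e.\ group homology with coefficients in the Steinberg module, not trivial coefficients. This is clear from the proof in the paper, which begins ``To show that $H_1(G_n;\St_n(\KK)\otimes\bk)=0$\dots'', and from the subsequent theorem, which invokes Borel--Serre duality to convert $H_1(G_n;\St_n(\KK)\otimes\bk)$ into $H^{\nu_n-1}(G_n;\bk)$ and cites this result to conclude. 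Your universal-coefficients reduction to $G^{\mathrm{ab}}\otimes_\Z\bk$ and subsequent perfectness/Swan arguments establish only the trivial-coefficient vanishing, which is an unrelated and much weaker statement (and it would not yield the cohomological vanishing \autoref{Vanishing} that is the point of the paper). The invertibility hypotheses also make this apparent: $n!$ and $3$ have no role in your argument, but in the actual proof they arise as the orders of the finite stabilizers of top-dimensional simplices in $BA_n(\OK)$.

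The correct route is entirely different: use \autoref{StnRes}, which gives a two-step partial resolution
\[
C_n(BA_n(\OK),BA_n(\OK)';\bk)\longrightarrow C_{n-1}(BA_n(\OK),BA_n(\OK)';\bk)\longrightarrow \St_n(\KK)\otimes\bk\longrightarrow 0
\]
by $\bk[\GL_n(\OK)]$-modules, check these chain groups are flat over $\bk[G_n]$ (which requires the stabilizer orders $|\OK^\times|^n\cdot n!$ and $6\cdot|\OK^\times|^{n-1}\cdot(n-2)!$ to be invertible in $\bk$; this is where $n!$ and $3$ are used, together with $|\OK^\times|\in\{4,6\}$), and then show $C_n(BA_n(\OK),BA_n(\OK)';\bk)_{G_n}=0$. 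The coinvariants vanish because every augmented basis is fixed up to sign by an element (of $\GL_n$, or of $\SL_n$ once $n\ge 3$) that transposes the two summands in the additive relation, negating the generator; since $2\in\bk^\times$ this kills each class. This is the essential combinatorial-algebraic input, and it has no analogue in your abelianization argument.
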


\begin{proof} By \autoref{StnRes} we have a partial resolution
\[C_n(BA_n(\OK),BA_n(\OK)';\bk) \overset{\partial}\longrightarrow C_{n-1}(BA_n(\OK),BA_n(\OK)';\bk) \longrightarrow 
\St_n(\KK)\otimes\bk \to 0\]
of $\St_n(\KK)\otimes\bk$ by $\bk[\GL_n(\OK)]$-modules. These modules are flat, e.g.~by Church--Putman \cite[Lemma 3.2]{CP}; its proof only requires that the orders of the stabilizers of simplices in $BA_n(\OK)$ which are not in $BA_n(\OK)'$ are invertible in the coefficients. There are two cases: \begin{itemize}

\item If $\sigma = \{v_1,\ldots,v_n\}$ is a basis of $\OK^n$, it has stabilizer of order $|\OK^\times|^n \cdot n!$, \item If $\sigma = \{v_0,\ldots,v_n\}$ is an additive simplex spanning $\OK^n$, it has stabilizer of order $6 \cdot |\OK^\times|^{n-1} \cdot (n-2)!$. \end{itemize}

\noindent Thus when $G_n$ is $\SL_n(\OK)$ or $\GL_n(\OK)$, we may compute $H_*(G_n; \St_n(\KK)\otimes \bk)$ by extending this partial resolution to a flat resolution, and taking the homology of the chain complex  
\[\cdots \longrightarrow C_n(BA_n(\OK),BA_n(\OK)';\bk)_{G_n} \longrightarrow C_{n-1}(BA_n(\OK),BA_n(\OK)';\bk)_{G_n} \longrightarrow  0\]
obtained by passing to $G_n$-coinvariants. To show that $H_1(G_n; \St_n(\KK)\otimes \bk) =0$, then, it suffices to show that the coinvariants $C_n(BA_n(\OK),BA_n(\OK)';\bk)_{G_n}$ vanish. 

Fix $n \geq 2$. The free $\bk$-module $C_n(BA_n(\OK),BA_n(\OK)';\bk) = C_n(BA_n(\OK);\bk) $ is spanned by augmented frames $\{v_0,\ldots,  v_n\}$ subject to the relation  
\[\{v_0,\ldots,  v_n\}=\mr{sgn}(\sigma)\{v_{\sigma(0)},\ldots, v_{\sigma(n)}\},\]
with $\sigma$ a permutation $\{1,\ldots,n\}$ and $\mr{sgn}(\sigma)$ its sign, and with $G_n$-action
\[g \{ v_0,\ldots,  v_n \}\coloneqq \{g(v_0), \ldots,g(v_n)\}, \qquad g \in G_n.\]
Consider an augmented basis $\{v_0,\ldots,  v_n\}$, reorder, and pick representatives $\vec v_0, \ldots, \vec v_n$ such that $\vec v_0=\vec v_1+\vec v_2$. Let $h \in \GL_n(\OK)$ be the linear map defined by 
\[h(\vec v_1)=\vec v_2, \qquad h(\vec v_2)=\vec v_1, \qquad h(\vec v_i)=\vec v_i  \text{ for $i >2$.}\]
 Then $h(\{v_0,v_1,v_2,  v_3,\ldots,  v_n\})=\{v_0,v_2,v_1, v_3, \ldots,  v_n\}=-\{v_0, v_1, v_2,  v_3,\ldots,  v_n\}$. Since $2$ is invertible in $\bk$, this implies that generators of $C_n(BA_n(\OK);\bk)$ map to zero in $C_n(BA_n(\OK);\bk)_{\GL_n(\OK)}$ and hence \[C_n(BA_n(\OK);\bk)_{\GL_n(\OK)} = 0 \qquad \text{for $n \geq 2$}.\] 
The element $h$ does not have determinant $1$ and this is the reason we must assume $n \geq 3$ to show the $\SL_n(\OK)$-coinvariants vanish. For $n \geq 3$, there is a linear map $\ell \in \SL_n(\OK)$ satisfying
\[\ell(\vec v_1)=\vec v_2, \qquad  \ell(\vec v_2)=\vec v_1,  \qquad \ell(\vec v_3)=-\vec v_3, \qquad \ell(\vec v_i)=\vec v_i  \text{ for $i >3$.}\]
Again $\ell$ negates  the generator $\{v_0,\ldots,  v_n\}$, and we infer that 
 \[C_n(BA_n(\OK);\bk)_{\SL_n(\OK)} \cong 0 \qquad \text{for $n \geq 3$}.\qedhere\] 
 \end{proof}

The following theorem implies \autoref{Vanishing}.
 
\begin{theorem} 
Let $\OK$ denote the  Gaussian integers or Eisenstein integers. Let $\bk$ be a ring with $(n+1)!$ invertible if $n$ is congruent to $1$ modulo $4$ and with $(2n+1)!$ invertible otherwise.  Then \begin{align*}
H^{\nu_n-1}(\GL_n(\OK);\bk) &= 0 \qquad \text{ for $n \geq 2$}, \\
H^{\nu_n-1}(\SL_n(\OK);\bk) &= 0 \qquad \text{ for $n \geq 3$}. 
\end{align*} 
\end{theorem}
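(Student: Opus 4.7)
The strategy is to convert the homological vanishing of \autoref{Sthomology} into the desired cohomological vanishing via Borel--Serre duality, with care taken to allow coefficients in a ring $\bk$ rather than just $\mathbb{Q}$. The group $G_n = \SL_n(\OK)$ or $\GL_n(\OK)$ is a virtual duality group of dimension $\nu_n$, with virtual dualizing module $\St_n(\KK)$; rationally this gives the natural isomorphism
\[
H^{\nu_n - i}(G_n; \mathbb{Q}) \;\cong\; H_i\bigl(G_n; \St_n(\KK) \otimes \mathbb{Q}\bigr)
\]
already invoked in \autoref{SecDualizing}. To promote this to an isomorphism with coefficients in $\bk$, the plan is to pass to a torsion-free finite-index subgroup $\Gamma' \subseteq G_n$, for which Bieri--Eckmann duality provides an honest isomorphism
\[
H^{\nu_n - i}(\Gamma'; \bk) \;\cong\; H_i\bigl(\Gamma'; \St_n(\KK) \otimes \bk\bigr),
\]
and then to use the transfer to identify the $G_n$-cohomology with the $G_n/\Gamma'$-invariants of the $\Gamma'$-cohomology, provided the index $[G_n:\Gamma']$ is invertible in $\bk$.

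Granting this, the deduction is immediate: once we have
\[
H^{\nu_n - 1}(G_n; \bk) \;\cong\; H_1\bigl(G_n; \St_n(\KK) \otimes \bk\bigr),
\]
the right-hand side vanishes by (the Steinberg-coefficient version proved inside of) \autoref{Sthomology}, whose hypothesis requires only $n!$ and $3$ to be invertible in $\bk$. Both $(n+1)!$ and $(2n+1)!$ contain $n! \cdot 3$ as a factor for the relevant ranges of $n$, so the Sthomology hypothesis is subsumed.

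The main obstacle, and the reason for the peculiar asymmetric hypothesis on $\bk$, is locating a suitable torsion-free finite-index subgroup $\Gamma' \subseteq G_n$ whose index divides the prescribed factorial. A natural candidate is a principal congruence subgroup $\Gamma(\mathfrak{p})$ for a prime $\mathfrak{p}$ of $\OK$ of small norm: for $\OK$ the Gaussian integers one would take $\mathfrak{p} = (1+i)$ of norm $2$, and for the Eisenstein integers $\mathfrak{p} = (1-\rho)$ of norm $3$. The index $[G_n : \Gamma(\mathfrak{p})]$ is then the order of $G_n(\OK/\mathfrak{p})$, a subgroup of $\GL_n$ of a small finite field, and it must be checked that these orders divide $(n+1)!$ or $(2n+1)!$ in the correct cases, and that the resulting subgroup is actually torsion-free. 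The split between $n \equiv 1 \pmod 4$ and the other residues plausibly tracks whether $-I$ lies in the kernel of reduction (equivalently, whether the naive congruence subgroup is torsion-free or must be enlarged by an index-$2$ factor), which alters the bookkeeping on the factorials.

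The final step, which is routine given the above, is to verify the transfer-based comparison above rigorously: apply the Lyndon--Hochschild--Serre spectral sequence (or the $G_n$-equivariant cellular spectral sequence on the Borel--Serre bordification) and observe that under the invertibility hypothesis the higher terms involving $G_n/\Gamma'$-cohomology vanish. I expect no surprises in carrying this out; the substantive content is the identification of $\Gamma'$ and the factorial bound on its index.
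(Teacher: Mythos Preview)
Your high-level strategy is exactly the paper's: convert the cohomological statement to $H_1(G_n;\St_n(\KK)\otimes\bk)$ via Borel--Serre duality, then invoke \autoref{Sthomology}. The difference, and the gap in your proposal, lies in how the duality isomorphism is established over $\bk$.

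Your plan is to find a torsion-free congruence subgroup $\Gamma(\mathfrak{p})$ and use the transfer, which requires $[G_n:\Gamma(\mathfrak{p})]$ to be invertible in $\bk$. But this index is $|\GL_n(\OK/\mathfrak{p})|$, whose prime divisors grow far faster than $n+1$ or $2n+1$. For example, with $\OK=\bZ[i]$ and $\mathfrak{p}=(1+i)$ you get $|\GL_n(\bF_2)|$, and already for $n=5$ this is divisible by $31=2^5-1$, which does not divide $(5+1)!=720$. So the congruence-subgroup route cannot produce the stated factorial bounds; your speculation that the mod-$4$ split tracks whether $-I$ survives reduction is not the mechanism.

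The paper proceeds differently. Borel--Serre duality holds with coefficients in any ring where all \emph{torsion primes} of $G_n$ are invertible (this is the standard strengthening of the rational statement). The asymmetric hypothesis then comes from a cited bound on torsion primes: by \cite[Lemma 3.9]{DGGJSY}, for $\OK$ quadratic imaginary the torsion primes of $\GL_n(\OK)$ are at most $n+1$ when $n\equiv 1\pmod 4$ and at most $2n+1$ otherwise. For the $\GL_n$ case an additional citation to Putman--Studenmund \cite[Theorem C]{PutStu} is needed, since $\GL_n(\OK)$ is not directly an arithmetic group in the original Borel--Serre framework. Your argument becomes correct if you replace the congruence-subgroup index computation with this torsion-prime bound.
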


\begin{proof} Let $\OK$ denote the Gaussian integers or Eisenstein integers. Borel--Serre duality \cite{BoSe} applies not just with $\Q$-coefficients as stated in the introduction, but in fact their work implies that duality hold with any coefficients in which all torsion primes for the group are invertible. By \cite[Lemma 3.9]{DGGJSY}, the torsion primes of $\GL_n(\OK)$ for $\OK$ quadratic imaginary are bounded by $n+1$ if $n \equiv 1 \pmod{4}$ and $2n+1$ otherwise. Thus, $H_1(\SL_n(\OK);\St_n(\KK)\otimes \bk) \cong H^{\nu_n-1}(\SL_n(\OK);\bk)$ as the orders of torsion elements of $G_n$ are invertible in $\bk$. Similarly, because $\KK$ is quadratic imaginary, Putman--Studenmund \cite[Theorem C and following paragraph]{PutStu} implies that \[H^{\nu_n-1}(\GL_n(\OK);\bk) \overset{\cong}{\lra} H_1(\GL_n(\OK);\St_n(\KK)\otimes \bk).\] 
\autoref{Sthomology} completes the proof.
\end{proof} 

\begin{remark}Church--Putman \cite[Theorem A]{CP} also gave a vanishing result for the twisted cohomology groups $H^{\nu_n-1}(\SL_n(\bZ);V_\lambda)$, where $V_\lambda$ is the rational representation of $\GL_n(\bQ)$ with highest weight $\lambda$ given by the partition $\lambda = (\lambda_1 \geq \ldots \geq \lambda_n)$; it vanishes for $n \geq 3+||\lambda||$ with $||\lambda|| = \sum_{i=1}^n (\lambda_i-\lambda_n)$. Their arguments are easily adapted to our situation: \[H^{\nu_n-1}(\SL_n(\OK);V_\lambda)=H^{\nu_n-1}(\GL_n(\OK);V_\lambda)=0 \qquad \text{for $n \geq 3+||\lambda||$,}\]
where $V_\lambda$ is now the rational representation of $\GL_n(\KK)$ with highest weight $\lambda$.
\end{remark}



\subsection{Examples of the failure of the generalized Bykovski\u\i \, presentation}

In this subsection, we prove \autoref{NoB} which gives examples of rings for which the Bykovski\u{\i} presentation does not hold. This will follow from the following more general theorem.


\begin{theorem} \label{EimpliesNoB}
Let $\OO$ be a Euclidean domain with field of fractions $\KK$. If $\OO$ has detours, then the map $\mr{Byk}_n(\OO) \m \St_n(\KK)$ is not injective for all $n \geq 2$.
\end{theorem}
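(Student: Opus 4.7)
The plan is to show non-injectivity by leveraging the nontriviality of $H_1(BA_2(\OO))$ from \autoref{H1}.

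For $n = 2$, the argument is direct. Every $1$-simplex of $BA_2(\OO)$ is a partial frame of size two in $\OO^2$, hence a basis spanning $\OO^2$ and therefore not in $BA_2(\OO)'$, and augmentation requires at least three vertices. Consequently $BA_2(\OO)'$ is the discrete set of lines in $\OO^2$, which is in natural bijection with the lines of $\KK^2$ since $\OO$ is a PID, so the span map induces $\widetilde H_0(BA_2(\OO)') \cong \St_2(\KK)$. Combining this with \autoref{n2conn} (which implies $BA_2(\OO)$ is connected) and \autoref{BykReliso} (identifying $\mr{Byk}_2(\OO) \cong H_1(BA_2(\OO), BA_2(\OO)')$), the long exact sequence of the pair yields a short exact sequence
\[0 \lra H_1(BA_2(\OO)) \lra \mr{Byk}_2(\OO) \lra \St_2(\KK) \lra 0,\]
whose right-hand map is the Bykovski\u\i\ map; the kernel is nonzero by \autoref{H1}.

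For $n \geq 3$, the same long exact sequence combined with the $(n{-}2)$-connectivity of $BA_n(\OO)$ from \autoref{n2conn} factors the Bykovski\u\i\ map as a surjection $\mr{Byk}_n(\OO) \twoheadrightarrow \widetilde H_{n-2}(BA_n(\OO)')$ followed by the span-induced map $\widetilde H_{n-2}(BA_n(\OO)') \to \St_n(\KK)$, so it suffices to show that the latter has nontrivial kernel. I would do this by applying the spectral sequence of \autoref{MPSS} to $\mr{span} \colon \mr{sd}(BA_n(\OO)') \to \cT_n(\KK)$, as in the proof of \autoref{MPSSisos}. The functor $T_q(V) = H_q(BA(V \cap \OO^n))$ has $T_1$ supported on $2$-dimensional subspaces of $\KK^n$: fibers over $1$-dimensional $V$ are points, while for $d \geq 3$ the complex $BA_d(\OO)$ is $1$-connected by \autoref{n2conn}. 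Applying \autoref{ChainsSingleSupport} and using that $\cT(\KK^n/V) \cong \cT_{n-2}(\KK)$ is $(n{-}4)$-spherical by \autoref{SolomonTits},
\[E^2_{n-3,\,1} \;\cong\; \bigoplus_{V\text{ $2$-dim}} \St_{n-2}(\KK) \otimes_{\Z} H_1(BA_2(\OO)),\]
which is nonzero by \autoref{H1}.

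The main obstacle is verifying that $E^2_{n-3,1}$ survives to $E^\infty$ and sits in a strictly lower filtration than $E^\infty_{n-2,0} = \St_n(\KK)$, so that it is contained in the kernel of the edge map. Incoming differentials $d_r \colon E^r_{n-3+r,\,2-r} \to E^r_{n-3,1}$ have vanishing source: for $r = 2$ the source is $E^2_{n-1,0} = H_{n-1}(\cT_n(\KK)) = 0$ by \autoref{SolomonTits}, and for $r \geq 3$ the second index $2-r$ is negative. For outgoing differentials $d_r \colon E^r_{n-3,1} \to E^r_{n-3-r,\,r}$, I would decompose the target $E^2_{n-3-r,\,r}$ by height via \autoref{ChainsSingleSupport}: a contribution from a $d$-dimensional subspace $V$ requires $p = n - d - 1$ (forced by the $(n{-}d{-}2)$-sphericity of $\cT(\KK^n/V)$ from \autoref{SolomonTits}, or $p = 0$ in the degenerate case $d = n-1$) together with $H_r(BA_d(\OO)) \neq 0$, whereas \autoref{n2conn} gives $H_q(BA_d(\OO)) = 0$ for $1 \leq q \leq d-2$; matching $p = n{-}3{-}r$ forces $d = r+2$ (resp.\ $d = n-1$ with $r = n-3$), and in either case the connectivity bound kills the required homology group. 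Hence $E^\infty_{n-3,1} = E^2_{n-3,1}$ is nonzero, producing a non-trivial class in $\widetilde H_{n-2}(BA_n(\OO)')$ contained in the kernel of the map to $\St_n(\KK)$; surjectivity of $\mr{Byk}_n(\OO) \twoheadrightarrow \widetilde H_{n-2}(BA_n(\OO)')$ then lifts this to a non-trivial kernel element of the Bykovski\u\i\ map.
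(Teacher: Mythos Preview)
Your proposal is correct and follows essentially the same route as the paper: the $n=2$ case via the long exact sequence of the pair $(BA_2(\OO),BA_2(\OO)')$ combined with \autoref{H1}, and the $n\geq 3$ case via the Quillen spectral sequence for $\mr{span}$, locating a nonzero $E^\infty_{n-3,1}$ in the kernel of the edge map to $\St_n(\KK)$.

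Two minor remarks on presentation. First, when you ``decompose the target $E^2_{n-3-r,r}$ by height via \autoref{ChainsSingleSupport}'', note that for $r\geq 2$ the functor $V\mapsto H_r(BA(V\cap\OO^n))$ is supported on \emph{two} heights ($\dim V\in\{r,r+1\}$), so \autoref{ChainsSingleSupport} does not apply directly; the paper makes this filtration explicit via a short exact sequence $0\to H'_q\to H_q(BA(-))\to H''_q\to 0$ and its associated long exact sequence, which globally shows that $E^2_{p,q}=0$ for $q\geq 2$ unless $p+q\in\{n-1,n-2\}$. Your pointwise check (matching $p=n-3-r$ forces $d=r+2$, where the connectivity from \autoref{n2conn} kills $H_r$) reaches the same conclusion and is fine once this filtration step is spelled out. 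Second, you assert $E^\infty_{n-2,0}=\St_n(\KK)$, which you neither need nor prove; what you use is only that the edge map $H_{n-2}(BA_n(\OO)')\to E^2_{n-2,0}=\St_n(\KK)$ is the span-induced map and that its kernel surjects onto $E^\infty_{n-3,1}$.
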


\begin{proof}
 
 Assume $\OO$ has detours and is Euclidean. 
 
\vspace{.1in}
\noindent {\bf The case $n=2$.} Recall from \autoref{BykReliso} that there is an isomorphism $H_{1}(BA_2(\OO),BA_2(\OO)') \cong \mr{Byk}_2(\OO)$ of $\bZ[\GL_2(\OO)]$-modules. Consider the long exact sequence of the pair $(BA_2(\OO),BA_2(\OO)')$,
\[\begin{tikzcd}
H_1(BA_2(\OO)') \rar & H_1(BA_2(\OO)) \rar \ar[draw=none]{d}[name=X, anchor=center]{} &  H_{1}(BA_2(\OO),BA_2(\OO)') \ar[rounded corners,
to path={ -- ([xshift=2ex]\tikztostart.east)
	|- (X.center) \tikztonodes
	-| ([xshift=-2ex]\tikztotarget.west)
	-- (\tikztotarget)}]{dll} &
\\
H_0(BA_2(\OO)') \rar &  H_0(BA_2(\OO)) \rar & H_{0}(BA_2(\OO),BA_2(\OO)').
\end{tikzcd}\]
\autoref{H1} implies that the group $H_1(BA_2(\OO))$ is nonzero, and as $H_1(BA_2(\OO)') = 0$ the connecting homomorphism $\partial$ has nontrivial kernel. By the proof of \autoref{BforGandE}, the map $\mr{Byk}_2(\OO) \m \St_2(\KK)$ factors as
\[\mr{Byk}_2(\OO) \overset{\cong}{\longrightarrow}  H_{1}(BA_2(\OO),BA_2(\OO)')  \overset{\partial}{\longrightarrow}  \widetilde H_{0}(BA_2(\OO)') \longrightarrow  \widetilde H_{0}(\cT_n(\KK)) \coloneqq\St_2(\KK).\] Since $\partial$ is not injective, the composite $\mr{Byk}_2(\OO) \m \St_2(\KK)$ cannot be an isomorphism. 
 
 \vspace{.1in}
 \noindent {\bf The case $n \geq 3$.} 
Recall from \autoref{BA'} that  $BA_n(\OO)' \subseteq BA_n(\OO)$ is the subcomplex of all simplices $\{v_0, v_1, \ldots, v_p\}$ for which $v_0 + v_1 + \cdots + v_p$ is a proper summand of $\OO^n$. There is a map $$\mr{span} \colon \mr{sd}(BA_n(\OO)') \to \cT_n(\KK)$$ and an associated strongly convergent spectral sequence
\[ E^2_{p,q} = H_p\Big(\cT_n(\KK);  [V \mapsto H_q(\mr{span} \backslash V)]\Big) \Longrightarrow H_{p+q}(BA_n(\OO)'),\]
described in 
\autoref{MPSS}. We will verify that for $n \geq 3$ its $E^2$-page satisfies the following:
\begin{enumerate}[\indent (i)]
	\item For $q=0$, the term $E^2_{p,0}=0$ unless $p=0$ or $p=n-2$.
	\item For $q=1$, the term $E^2_{p,1}=0$ unless $p=n-3$.
	\item For $q \geq 2$, the term $E^2_{p,q}=0$ unless $(p+q)$ is equal to $(n-1)$ or $(n-2)$. \\
	See \autoref{E2noB}.
	\item $E^2_{n-2,0} \cong H_{n-2}(\cT_n(\KK))$.
	\item $\displaystyle E^2_{n-3,1} \cong     \bigoplus_{\substack{ V \subseteq \KK^n \\ \dim(V)=2}} \widetilde{H}_{n-4}(\cT(\KK^n/V); H_1(BA(V))$.
\end{enumerate} 


%
%
%
 \begin{figure}[h!]    \hspace{-0cm} \hspace{-1.2cm} 
\begin{center}  \begin{tikzpicture} \scriptsize
  \matrix (m) [matrix of math nodes,
    nodes in empty cells,nodes={minimum width=3ex,
    minimum height=3ex,outer sep=2pt},
 column sep=3ex,row sep=3ex]{  
  6    & 0 &  &  & & &  &  \\   
  5    &\bigstar & 0 && &  &  &   \\ 
 4    & \bigstar & \bigstar & 0 &  &  &   &  \\   
 3    & 0& \bigstar & \bigstar & 0 & &   & \\  
 2   & 0 & 0 & \bigstar & \bigstar & 0 &  &  \\            
1     & 0 & 0 & 0 & \bigstar& 0 & 0 &  \\        
 0     & \bigstar & 0 & 0 & 0&\bigstar & 0& 0  \\       
 \quad\strut &     0  &  1  & 2  & 3 & 4 &5 &6  \\}; 
 \draw[thick] (m-1-1.east) -- (m-8-1.east) ;
 \draw[thick] (m-8-1.north) -- (m-8-8.north east) ;
 



\end{tikzpicture}
\end{center}
\caption{The page $E^2_{p,q}$ when $n=6$.}
\label{E2noB}
\end{figure}

We first analyze the posets $\mr{span}\backslash V$. For a proper nonzero subspace $V \subseteq \KK^n$, observe that
\[\mr{span}\backslash V  = \Big\{ \{v_0, v_1, \ldots, v_p\} \in \cat{simp}(BA_n(\OO)') \; \Big| \; \KK v_0 + \KK v_1 + \cdots + \KK v_p \subseteq V\Big\} \cong BA(V\cap \OO^n).\]
By abuse of notation, for $V \subseteq \KK^n$ we write $BA(V)$ to denote the complex $BA(V \cap \OO^n)$. \autoref{n2conn} states that the complex $BA(V)$ is $(\dim(V)-2)$-connected, so 
\[H_q(\mr{span}\backslash V ) = 0 \qquad \text{ except possibly when $q=0$, $\dim(V)-1$, or $\dim(V)$}.\]
 
First consider the case $q=0$. The complex $BA(V)$ is always connected; this follows from  \autoref{n2conn} when $\dim(V) \geq 2$ and because $BA(V)$ is a point when $\dim(V)=1$. Thus when $q=0$ we find that the functor $[V \mapsto H_0(BA(V)]$ is the trivial functor $\Z$. Then $E^2_{p,0} \cong H_p(\cT_n(\KK))$. By the Solomon--Tits theorem (\autoref{SolomonTits}), $\cT_n(\KK)$ is spherical of dimension $(n-2)$, so the term $E^2_{p,0}=0$ unless $p=0$ or $p=n-2$, properties (i) and (iv).
 
Let $q=1$. Because $BA(V)$ is a point when $\dim(V)=1$, the group  $H_1(BA(V))$ can be nonzero only when $\dim(V)=2$. Thus the functor $[V \mapsto H_1(\mr{span}\backslash V)]$ is nonzero only on elements of a single height. Taking the quotient by $V$ gives an isomorphism $\cT_n(\KK)_{>V} \m \cT(\KK^n/V)$, and so by \autoref{ChainsSingleSupport}, 
\[E^2_{p,1}  =  \bigoplus_{\substack{ V \subseteq \KK^n \\ \dim(V)=2}} \widetilde{H}_{p-1}(\cT(\KK^n/V); H_1(BA(V)).\]
\noindent The Solomon--Tits theorem implies that $E^2_{p,1}=0$ unless $p=n-3$, properties (ii) and (v).
 
Let $q \geq 2$. In order to apply \autoref{ChainsSingleSupport} to the terms $E^2_{p,q}$, we will write the functors $[V \mapsto H_q(\mr{span}\backslash V ) = H_q(BA(V))]$ as extensions of functors that are each nonzero only on elements of a single height. There is a short exact sequence of functors, 
\[0 \lra H'_q \lra H_q(BA(-)) \lra H''_q \lra 0,\]
with functors $H'_q$, $H''_q$ given by
\[H'_q(V) \coloneqq \begin{cases} H_q(BA(V)) & \text{if $\dim(V) = q+1$,} \\
0 & \text{otherwise,} \end{cases} \qquad H''_q(V) \coloneqq \begin{cases} H_q(BA(V)) & \text{if $\dim(V) = q$,} \\
0 & \text{otherwise,} \end{cases} \]
and natural transformations between them given by
\[\begin{tikzcd} &[10pt] \dim(U) =q &[-3pt] \dim(W) = q+1 \\[-12pt]
H'_q \dar & 0 \rar \dar & H_{q} (BA(W)) \dar[equals] \\[-2pt]
H_q(BA(-)) \dar & H_{q} (BA(U)) \dar[equals] \rar{(U \hookrightarrow W)_*} & H_{q} (BA(W)) \dar \\[-2pt]
H''_q & H_{q} (BA(U)) \rar & 0\end{tikzcd}\]

We can then apply \autoref{ChainsSingleSupport} to the terms in the associated long exact sequence on homology:  
\[\begin{tikzcd} \vdots \dar &[-10pt] \\[-12pt]
H_p(\cT_n(\KK);  H_q') \dar \rar[equal] &\displaystyle  \bigoplus_{\substack{ W \subseteq \KK^n \\ \dim(W)=q+1}} \widetilde{H}_{p-1}(\cT(\KK^n/W); H_q(BA(W))  \\[-5pt]
H_p\Big(\cT_n(\KK),  [V \mapsto H_q(BA(V))]\Big) \rar[equal] \dar & E^2_{p,q} \\[-5pt]
H_p(\cT_n(\KK);  H_q'') \dar \rar[equal] & \displaystyle   \bigoplus_{\substack{ U \subseteq \KK^n \\ \dim(U)=q}} \widetilde{H}_{p-1}(\cT(\KK^n/U); H_q(BA(U)) \\[-26pt]
\vdots & \end{tikzcd}\]

The Solomon--Tits theorem now implies that for $q\geq 2$ the homology groups $E^2_{p,q}$ can be nonzero only when $p+q$ is equal to $n-1$ or $n-2$, property (iii). We have verified our description of the $E^2$ page, as illustrated in \autoref{E2noB}.

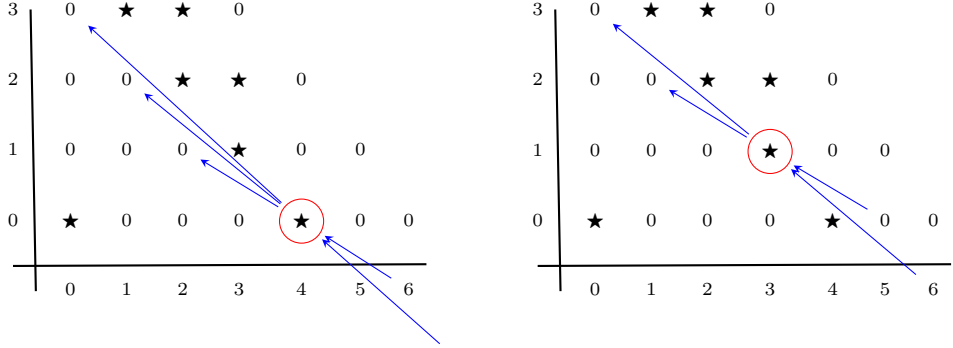
\begin{figure}[h!]    \hspace{-0cm} \hspace{-1.2cm} 
	\begin{subfigure}{.45\textwidth} 
		\begin{tikzpicture} \scriptsize
		\matrix (m) [matrix of math nodes,
		nodes in empty cells,nodes={minimum width=3ex,
			minimum height=5ex,outer sep=2pt},
		column sep=3ex,row sep=3ex]{  
			3    & 0& \bigstar & \bigstar & 0 & &   && \\  
			2   & 0 & 0 & \bigstar & \bigstar & 0 &  &&  \\            
			1     & 0 & 0 & 0 & \bigstar&0 & 0 &&  \\        
			0     & \bigstar & 0 & 0 & 0&|[draw=red, circle]| \bigstar & 0& 0&  \\       
			\quad\strut &     0  &  1  & 2  & 3 & 4 &5 &6 &  \\ &&&&&&&& \\}; 
		\draw[thick] (m-1-1.east) -- (m-5-1.east) ;
		\draw[thick] (m-5-1.north) -- (m-5-8.north east) ;
		
		\draw[-stealth, blue]  (m-4-6) -- (m-3-4);
		\draw[-stealth, blue]  (m-4-6) -- (m-2-3);
		\draw[-stealth, blue]  (m-4-6) -- (m-1-2);
		\draw[-stealth, blue]  (m-5-8) -- (m-4-6);
		\draw[-stealth, blue]  (m-6-9) -- (m-4-6);
		
		\end{tikzpicture}
		\vspace{-2em}
		\subcaption{The term $E^r_{n-2,0}$}
		\label{E^2_{n-2,0}}
	\end{subfigure}
	\begin{subfigure}{.45\textwidth} 
		\begin{tikzpicture} \scriptsize
		\matrix (m) [matrix of math nodes,
		nodes in empty cells,nodes={minimum width=3ex,
			minimum height=5ex,outer sep=2pt},
		column sep=3ex,row sep=3ex]{  
			3    & 0& \bigstar & \bigstar & 0 & &   && \\  
			2   & 0 & 0 & \bigstar & \bigstar & 0 &  &&  \\            
			1     & 0 & 0 & 0 &|[draw=red, circle]|  \bigstar&0 & 0 &&  \\        
			0     & \bigstar & 0 & 0 & 0& \bigstar & 0& 0&  \\       
			\quad\strut &     0  &  1  & 2  & 3 & 4 &5 &6 &  \\ &&&&&&&& \\}; 
		\draw[thick] (m-1-1.east) -- (m-5-1.east) ;
		\draw[thick] (m-5-1.north) -- (m-5-8.north east) ;
		
		\draw[-stealth, blue]  (m-3-5) -- (m-2-3);
		\draw[-stealth, blue]  (m-3-5) -- (m-1-2);
		\draw[-stealth, blue]  (m-4-7) -- (m-3-5);
		\draw[-stealth, blue]  (m-5-8) -- (m-3-5);
		
		\end{tikzpicture}
		\vspace{-2em}
		\subcaption{The term $E^r_{n-3,1}$}
		\label{E^2_{n-3,1}}
	\end{subfigure}
	\caption{The spectral sequence $E^r_{p,q}$ illustrated for $n=6$.}
	\label{FigNoD}
\end{figure}

From the structure of the $E^2$ page, we can deduce the terms
$E^r_{n-2, 0}$ and $E^r_{n-3, 1}$ are not the source or target of any nonzero differentials for any $r \geq 2$. See \autoref{FigNoD}. It follows that $E^2_{n-2,0} \cong E^\infty_{n-2, 0}$ and $E^2_{n-3, 1} \cong E^\infty_{n-3, 1}$. Using formal properties of the spectral sequence, we see that there is a surjection
\[H_{n-2}(BA_n(\OO)') \longrightarrow E^2_{n-2, 0}  \cong H_{n-2}(\cT_n(\KK))\]
and the term $E^2_{n-3, 1}$ is a quotient of the kernel. But 
\[E^2_{n-3, 1}  \cong     \bigoplus_{\substack{ V \subseteq \KK^n \\ \dim(V)=2}} \widetilde{H}_{n-4}(\cT(\KK^n/V); H_1(BA(V)) \cong \bigoplus_{\substack{ V \subseteq \KK^n \\ \dim(V)=2}} \St(\KK^n/V) \otimes H_1(BA(V)).\]
\autoref{H1} imply that the group $H_1(BA(V))$ is nonzero, and the Steinberg module $\St(\KK^n/V)$ is nonzero for $n \geq 3$, so we conclude $E^2_{n-3, 1} \neq 0$ in this range. 

As in the proof of \autoref{BforGandE}, our map $\mr{Byk}_n(\OO) \m \St_n(\KK)$ factors as (since $n\geq 3$, in degree $(n-2)$ we can conflate reduced and non-reduced homology)
\[\mr{Byk}_n(\OO) \overset{\cong}{\longrightarrow}  H_{n-1}(BA_n(\OO),BA_n(\OO)')  \overset{\partial}\lra  \widetilde H_{n-2}(BA_n(\OO)') \lra  \widetilde H_{n-2}(\cT_n(\KK)) = \St_n(\KK).\] Since $ H_{n-2}(BA_n(\OO)) = 0$, the connecting homomorphism $\partial$ is surjective. We have proven that the map $H_{n-2}(BA_n(\OO)') \to H_{n-2}(\cT_n(\KK))$ is not injective, so $\mr{Byk}_n(\OO) \m \St_n(\KK)$ is not injective.
\end{proof}

We now prove \autoref{NoB} which states that the generalized Bykovski\u{\i} presentation does not hold for Euclidean quadratic number rings that are not generated by units.

\begin{proof}[Proof of \autoref{NoB}]

Let $\OK$ be a quadratic number ring which is Euclidean but is not generated by units. By \autoref{exotic}, $\OK$ has detours. The claim now follows from \autoref{EimpliesNoB}.
\end{proof}

\begin{remark}\label{MultUnits} See \autoref{SecDualizing} for our notation for fundamental classes of apartments. In this notation, the proof of \autoref{NoB} in conjunction with \autoref{sqrt7loop} shows that, 
 \[ \left[\left[\begin{pmatrix} 1\\ 0 \end{pmatrix} , \begin{pmatrix} \sqrt{7} \\ 1 \end{pmatrix} \right ] \right ] 
 + \left[\left[ \begin{pmatrix} \sqrt{7} \\ 1 \end{pmatrix}, \begin{pmatrix} 8\\ -3 \end{pmatrix} \right ]\right ]  + \left[ \left[ \begin{pmatrix} 8 \\ -3 \end{pmatrix}, \begin{pmatrix} -3\\ 1 \end{pmatrix} \right ]\right ]   
 + \left[\left[ \begin{pmatrix} -3 \\ 1 \end{pmatrix}, \begin{pmatrix} 1\\ 0 \end{pmatrix} \right ] \right ] =0   \] 
 is a relation in $\St_2(\Q(\sqrt{7}))$ that does not follow from the generalized Bykovski\u\i \, relations.
\end{remark}

\section{Open questions} We end with some open questions. All examples of Euclidean domains for which the generalized Bykovski\u\i \, presentation is known to hold are generated by units. Conversely, all Euclidean domains for which the generalized Bykovski\u\i \, presentation is known to fail are not generated by units.

\begin{question}
For $\OK$ a Euclidean domain, does the generalized Bykovski\u\i \, presentation hold if and only if $\OK$ is generated by units?
\end{question}


The following question asks whether all relations in $\St_n(\KK)$ come from $\St_2(\KK)$.

\begin{question}
Let $\mr{Ker}_n(\OK)$ denote the kernel of $\mr{Byk_n(\OK)} \m \St_n(\KK)$. For $\OK$ a Euclidean domain, is there a natural surjection  \[ \mr{Ind}^{\GL_n(\OK)}_{\GL_{2}(\OK) \times \GL_{n-2}(\OK) } \mr{Ker}_2(\OK) \boxtimes \mr{Byk}_{n-2}(\OK) \m \mr{Ker}_n(\OK)?\]
\end{question}

The group $\mr{Ker}_n(\OK)$ measures relations in the Steinberg module beyond those appearing in the Bykovski\u\i \, presentation. An affirmative answer would imply that for $n \geq 4$ and all Euclidean domains $\OK$, we have $H_1(\GL_n(\OK);\St_n(\KK) \otimes \Q) = H_1(\SL_n(\OK);\St_n(\KK) \otimes \Q) = 0$.

Vanishing results near the virtual cohomological dimension are the subject of several conjectures by Church--Farb--Putman \cite{CFPconj}. In particular, they conjecture ( \cite[Conjecture 2]{CFP}) that 
\[H^{\nu_n-i}(\SL_n(\bZ);\bQ) = 0 \qquad \text{ for $i<n-1$}.\]
This is supported by the available computations \cite[Remark 5.3]{DSEVKM}, and known for $i=0$ by Lee--Szczarba \cite[Theorem 1.3]{LS} and $i=1$ by Church--Putman \cite[Theorem A]{CP}. It is natural to ask the same question for other number rings; using \cite[Theorem 1.3]{LS} and \autoref{Vanishing} it is also true for $i=0,1$ when we replace $\bZ$ by the Gaussian integers or Eisenstein integers.

\begin{conjecture} Let $\OK$ denote the Gaussian integers or Eisenstein integers. Then \[H^{\nu_n-i}(\SL_n(\OK);\bQ) = H^{\nu_n-i}(\GL_n(\OK);\bQ) = 0 \qquad \text{ for all $i<n-1$}.\]
\end{conjecture}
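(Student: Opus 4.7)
The plan is to extend the strategy used in \autoref{Vanishing} and \autoref{Sthomology} from $i=1$ to the full range $i<n-1$. By Borel--Serre duality and the Putman--Studenmund comparison for $\GL_n$ invoked in the proof of \autoref{Vanishing} (both valid at $\bQ$-coefficients), the conjecture reduces to showing $H_i(G_n;\St_n(\KK)\otimes\bQ)=0$ for all $i<n-1$, where $G_n$ denotes either $\SL_n(\OK)$ or $\GL_n(\OK)$. The goal is then to build a flat $\bQ[G_n]$-resolution of $\St_n(\KK)\otimes\bQ$ of length at least $n-1$ and to show that its first $n-1$ coinvariant terms vanish. Since this is the analogue of \cite[Conjecture 2]{CFP} for the rings $\bZ[i]$ and $\bZ[\rho]$, no such argument is currently known, and the plan below should be regarded as a natural extrapolation of the techniques in this paper.

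A natural candidate for the resolution is a tower
\[BA_n(\OK)=BA^{(1)}_n(\OK)\subseteq BA^{(2)}_n(\OK)\subseteq\cdots\subseteq BA^{(n-1)}_n(\OK),\]
where a $k$-augmented simplex consists of a partial frame together with up to $k$ iterated additive relations of the form $\vec v_j = u_a^{(j)}\vec v_{j'} + u_b^{(j)}\vec v_{j''}$, with $u_a^{(j)}, u_b^{(j)}$ units and $j',j''$ indexing previously chosen vectors. The successive relative chain complexes $C_\bullet(BA^{(k)}_n(\OK), BA^{(k-1)}_n(\OK))$ should provide the terms of the resolution, just as $C_\bullet(BA_n(\OK), BA_n(\OK)')$ contributes the last term in \autoref{StnRes}. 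A spectral sequence argument of the type used in \autoref{MPSSisos} would then identify the top homology with $\St_n(\KK)$, provided each $BA^{(k)}_n(\OK)$ is $(n+k-2)$-connected for $1\leq k\leq n-1$---the natural extension of \autoref{BAnOtherRings}.

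To show vanishing of the coinvariants, one adapts the swap-negation trick from the proof of \autoref{Sthomology}: given a generator $\{v_0,\dots,v_p\}$ with a relation $\vec v_0=\vec v_1+\vec v_2$, the transposition $\vec v_1\leftrightarrow\vec v_2$ lies in $\GL_n(\OK)$ and negates the generator. For $\SL_n(\OK)$ one must simultaneously negate another coordinate to restore the determinant, which is possible precisely when the simplex contains a coordinate ``free'' of the iterated additive data. The bound $i<n-1$ should correspond exactly to the dimension count needed to guarantee such a free coordinate at the $(i+1)$-st level of the resolution.

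The main obstacle will be proving the high connectivity of $BA^{(k)}_n(\OK)$ for $k\geq 2$. Even the case $k=1$ required the delicate retraction \autoref{RETLEM}, which in turn rested on the Gaussian/Eisenstein-specific geometric inputs \autoref{lem2G} and \autoref{lem2E}. For larger $k$, the analogue of a carrying simplex is a cycle of iterated additive relations whose nullhomotopy lives in a more intricate configuration space; producing an analogue of \autoref{lem2G}/\autoref{lem2E} that simultaneously controls sums of $k$ perturbations appears to require substantially new geometric input. A secondary difficulty is that the stabilizers of $k$-augmented simplices grow with $k$, so the flatness argument of \autoref{Sthomology} will need further inductive refinement (although it remains compatible with $\bQ$-coefficients). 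An alternative approach via cellular $E_\infty$-algebras, in the spirit of the forthcoming \cite{KMP} referenced in the introduction, may provide a more conceptual framework, but it will still require connectivity inputs of comparable strength.
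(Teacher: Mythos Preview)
The statement you were asked to address is a \emph{conjecture}, not a theorem: it appears in Section~5 (``Open questions'') and the paper offers no proof. There is therefore no argument in the paper to compare your proposal against. You seem to be aware of this, since you explicitly write that ``no such argument is currently known'' and describe your submission as ``a natural extrapolation of the techniques in this paper'' rather than a proof.

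That said, as a research outline your proposal is coherent but speculative. The central gap is exactly the one you identify: the high connectivity of the hypothetical complexes $BA^{(k)}_n(\OK)$ for $k\geq 2$ is entirely unproven and would be the heart of the matter. Even the precise definition of these complexes is not fixed---your description of ``iterated additive relations'' is informal, and it is not obvious that any particular formalization would yield a complex whose relative chains splice into a resolution of $\St_n(\KK)$. The case $k=1$ already required the full force of the paper's technical work (\autoref{RETLEM}, \autoref{BA21}, and the ring-specific geometric lemmas), and there is no evidence that the method scales. Your proposal should therefore be read as a plausible research program, not a proof; the conjecture remains open.
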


This is supported by the available computations Dutour Sikiri\'{c}--Gangl--Gunnells--Hanke--Sch\"{u}rmann--Yasaki 
\cite[Tables 11, 12]{DGGJSY} or \cite[Propositions 2.6, 2.10]{DSGGHSYK4}:  
$$H^{\nu_4-i}(\GL_4(\OK);\bQ) = 0 \qquad \text{ for $i \leq 2$ and $\OK$ the Gaussian integers or Eisenstein integers.}$$  For general Euclidean number rings, one might expect a similar vanishing result though possibly with a worse range.

One can also ask about integral versions of our vanishing result. In \cite[Theorem 1.10]{MNP} it was proven that for $n \geq 6$ \[H_1(\GL_n(\Z);\St_n(\Q)) = H_1(\SL_n(\Z);\St_n(\Q)) = 0.\] 

\begin{question}
Is it true that for $n \geq 6$ we have
 \[H_1(\GL_n(\OK);\St_n(\KK)) = H_1(\SL_n(\OK);\St_n(\KK)) = 0\]  when $\OK$ is the Gaussian integers or Eisenstein integers? Can the range be improved?
\end{question}

The Bykovski\u\i\, presentation is also useful for computing the homology of congruence subgroups. For an ideal $J \subset \OK$, let \[ \Gamma_n(J)\coloneqq \mr{Ker}\Big[\GL_n(\OK) \to \GL_n(\OK/J) \Big].  \] We say an ideal $J$ has \emph{the Lee--Szczarba property} (see \cite[Page 28]{LS}) if the natural map \[H^{\nu_n}(\Gamma_n(J);\Q) \lra H_0(\Gamma_n(J);\St_n(\KK) \otimes \Q) \lra \widetilde H_{n-2}(\cT_n(\KK)/\Gamma_n(J);\Q)\] is an isomorphism.  For $\OK=\Z$, the prime ideals with the Lee--Szczarba property are $(2),(3)$ and $(5)$; see \cite[Theorem 1.2]{LS}  and \cite[Theorem A]{MPP}. The proof relies on the Bykovski\u\i\, presentation.

\begin{question}
Which prime ideals in the Gaussian integers or Eisenstein integers have the Lee--Szczarba property? 
\end{question}

Often, $\widetilde H_{n-2}(\cT_n(\KK)/\Gamma_n(J))$ is computable (see \cite[Table 1]{MPP}) so an answer to this question could yield concrete calculations.

\bibliographystyle{amsalpha}
\bibliography{refs}

\vspace{.5cm}

\end{document}